\documentclass{article}

\PassOptionsToPackage{numbers, compress}{natbib}

\usepackage[preprint]{neurips_2026}

\usepackage[utf8]{inputenc} 
\usepackage[T1]{fontenc}    
\usepackage{hyperref}       
\usepackage{url}            
\usepackage{booktabs}       
\usepackage{amsfonts}       
\usepackage{nicefrac}       
\usepackage{xcolor}         
\usepackage{graphicx}
\usepackage{setspace}
\usepackage{algorithm}
\usepackage{algorithmic}
\usepackage{caption}

\usepackage{amsmath}
\usepackage{amssymb}
\usepackage{mathtools}
\usepackage{amsthm}
\usepackage{times,amsmath,amsbsy,amssymb,amscd,mathrsfs}
\usepackage{multicol,multirow}
\usepackage{mathtools}
\usepackage[dvipsnames,svgnames,table]{xcolor}
\usepackage[all]{xy}
\usepackage{wrapfig}
\usepackage{tcolorbox}
\usepackage{stmaryrd}

\usepackage{tikz,tikz-cd}
\usepackage[utf8]{inputenc}
\usepackage{pgfplots} 
\usepackage{pgfgantt}
\usepackage{pdflscape}
\pgfplotsset{compat=newest} 
\pgfplotsset{plot coordinates/math parser=false}
\newlength\fwidth

\usepackage{listings}
\usepackage{xcolor}
\definecolor{myBlue}{rgb}{0.0,0.0,0.55}

\usepackage[hyperpageref]{backref}
\usepackage{circledsteps}

\usepackage{comment,enumerate,multicol,xspace}

  \newcounter{mnote}
  \let\oldmarginpar\marginpar
    \renewcommand\marginpar[1]{\-\oldmarginpar[\raggedleft\footnotesize #1]%
    {\raggedright\footnotesize #1}}

\newcommand{\dd}{\,{\rm d}}

\newcommand{\dual}[1]{\left\langle {#1} \right\rangle}

\newcommand{\vertiii}[1]{{\left\vert\kern-0.25ex\left\vert\kern-0.25ex\left\vert #1 
    \right\vert\kern-0.25ex\right\vert\kern-0.25ex\right\vert}}

\usepackage[capitalize,noabbrev]{cleveref}

\theoremstyle{plain}
\newtheorem{theorem}{Theorem}[section]
\newtheorem{proposition}[theorem]{Proposition}
\newtheorem{lemma}[theorem]{Lemma}

\theoremstyle{definition}
\newtheorem{definition}[theorem]{Definition}
\newtheorem{assumption}[theorem]{Assumption}
\theoremstyle{remark}
\newtheorem{remark}[theorem]{Remark}
\newtheorem{example}[theorem]{Example}

\title{Adaptive Accelerated Mirror Descent in Primal and Dual Spaces}

\author{%
  Zeyi Xu \\
  Department of Mathematics\\
  University of California, Irvine\\
  Irvine, CA 92697 \\
  \texttt{zeyix1@uci.edu} \\
 \And
 Long Chen \\
 Department of Mathematics\\
  University of California, Irvine\\
  Irvine, CA 92697 \\
  \texttt{chenlong@math.uci.edu} \\
}

\begin{document}

\maketitle

\begin{abstract}
We propose Adaptive Accelerated Mirror Descent (AAMD), a flow-based method that combines nonlinear preconditioning, acceleration, and adaptivity in mirror geometry. The key ingredient is an accumulated Lyapunov perturbation budget: local descent failures are allowed as long as the total budget remains nonpositive, so line search is used only when stability is at risk. We prove accelerated convergence under dual relative smoothness/convexity and a mirror-geometry compatibility condition, and obtain an $O(1/k^2)$ rate for convex objectives by homotopy under a bounded-sublevel-set assumption. Experiments on relative-smoothness problems show that combining preconditioning, acceleration, and adaptivity gives substantial gains over methods using only part of this structure.
\end{abstract}

\section{Introduction}

Gradient descent (GD) is a basic tool in first-order optimization. Its performance can degrade sharply when $f$ is ill-conditioned or when the Euclidean geometry poorly matches the problem. Three ideas address this issue: preconditioning, acceleration, and adaptivity. Each is well developed on its own. Much less is known about how to combine all three in nonlinear mirror geometry.

\paragraph{Preconditioning and generalized smoothness.}
Standard preconditioning replaces the Euclidean metric by a state-dependent matrix $A_k$, giving
$x_{k+1}=x_k-\alpha_k A_k^{-1}\nabla f(x_k)$. Mirror descent (MD)~\citep{nemirovskij1983problem} generalizes this idea by using a mirror function $\phi$:
\begin{equation}\label{eq:MD}
\nabla \phi(x_{k+1})=\nabla \phi(x_k)-\alpha_k\nabla f(x_k).
\end{equation}
Dual mirror descent (DMD) uses the Fenchel conjugate $\phi^*$ and gives the primal-space update
\begin{equation}\label{eq:DMD}
x_{k+1}=x_k-\alpha_k\nabla \phi^*(\nabla f(x_k)).
\end{equation}
DMD includes gradient clipping and coordinate-wise schemes, such as Adam and Adagrad, by allowing larger effective steps along low-curvature directions~\citep{OikonoQuanLaudePatrin2025Nonlinearly}. In particular, \citet{maddison2021dual} established the formal duality between MD~\eqref{eq:MD} and DMD~\eqref{eq:DMD}.

\paragraph{Acceleration under relative geometry.}
Momentum acceleration, pioneered by Nesterov~\cite{nesterov1983method}, achieves the optimal $\mathcal O(1/k^2)$ rate for convex smooth problems. Classical acceleration is Euclidean. Its extension to mirror geometry has been studied in several forms~\cite{nemirovski2004prox,nesterov2005smooth}. \citet{kim2023mirror} studied mirror duality in convex optimization and derived dual accelerated mirror-descent-type methods. ODE discretizations also give accelerated rates~\cite{krichene2015accelerated,yuan2023analysis}. However, these rates are of order $\mathcal O(L_f/k^2)$ and still depend on the Euclidean Lipschitz constant $L_f$ of $\nabla f$. This constant can be large or infinite, which limits the benefit of mirror geometry.

To avoid this Euclidean dependence, \citet{Hanzely2021} proposed Accelerated Bregman Proximal Gradient (ABPG), whose rate depends on the triangle scaling exponent (TSE). More recently, \citet{chen2025accmd} developed Acc-MD through variable-operator splitting~\cite{chen2025accelerated} and proved an accelerated linear rate under relative geometry.

Acceleration in mirror geometry requires extra structure. Relative smoothness alone does not generally imply global accelerated rates~\cite{DragomTaylordAspreBolte2022Optimal}. Existing approaches impose additional geometric conditions, such as the TSE condition in ABPG or the generalized Cauchy--Schwarz condition in~\cite{chen2025accelerated}. 

\paragraph{Adaptivity without overhead.}
Practical methods must adapt to unknown or spatially varying curvature. Classical line search enforces descent at every step, but this may require repeated gradient evaluations. Modern adaptivity, as defined by~\cite{malitsky2024adaptive}, tracks local smoothness using only information already computed. Such schemes exist for Euclidean GD~\citep{malitsky2019adaptive,xu2026adaptiveacceleratedgradientdescent}, but they have not been fully developed for accelerated mirror methods.

\paragraph{Main idea.}
The three ingredients above are not new by themselves. The contribution of this paper is to combine them in one stable algorithmic framework. The method uses mirror geometry as a nonlinear preconditioner, adds acceleration through a primal--dual mirror flow, and adapts local parameters through a Lyapunov stability budget. This combination matters: preconditioning alone changes the geometry but does not accelerate; acceleration alone remains tied to the Euclidean smoothness constant $L_f$ and may fail to apply when $L_f$ is large or infinite; adaptivity alone does not exploit the mirror geometry. Their coupling gives a method that is geometry-aware, accelerated under relative geometry, and stable with few backtracking steps.

\paragraph{Contributions.}
The paper makes the following contributions.

\noindent\textbf{1. A primal--dual accelerated mirror framework.}
We derive an accelerated mirror-descent-type scheme by coupling a dual preconditioned descent step for the primal variable with a primal mirror update for an auxiliary variable. The construction is guided by a continuous-time Lyapunov flow. This flow shows how nonlinear preconditioning and acceleration can be coupled while preserving the energy dissipation property.

The starting point is closely related to dual-space preconditioning and mirror duality~\cite{maddison2021dual,laude2025anisotropic,kim2023mirror}. In particular, a related coupling of primal and dual mirror updates was considered by~\citet{kim2023mirror}. However, their method does not include adaptivity, and its acceleration rate depends on the Euclidean Lipschitz constant $L_f$ rather than a relative smoothness constant. The new point here is the flow-based integration of nonlinear preconditioning, acceleration, and adaptive stability control.

\noindent\textbf{2. Adaptive stability through an accumulated perturbation budget.}
The main algorithmic ingredient is an accumulated perturbation budget. Instead of imposing a strict descent inequality at every step, the method allows local violations when they are compensated by previous descent. Backtracking is triggered only when the accumulated budget becomes positive. Thus line search acts as a safeguard rather than a repeated per-iteration routine, while global Lyapunov stability is preserved. In our numerical experiments, fewer than ten backtracking steps occur in total, and all appear during the initial transient phase.

\noindent\textbf{3. A combined method with improved practical performance.}
The resulting Adaptive Accelerated Mirror Descent (AAMD) method combines nonlinear preconditioning, acceleration, and adaptivity. Numerical experiments show that this combination is stronger than using only one or two of these ingredients.

%
%
%

\paragraph{Limitations.}
The results require a mirror map adapted to the objective. The method may not help if $\nabla\phi$ or $\nabla\phi^*$ is expensive to compute, or if the mirror map does not reduce the relative condition number. The accelerated rate also relies on a mirror-geometry compatibility condition, which is automatic in the Euclidean case but may hold only locally or with a large constant for general mirror maps. The present analysis is deterministic and convex. Stochastic and nonconvex extensions are left for future work.

\section{Preliminaries}\label{sec: Preliminaries}

We recall basic facts from finite-dimensional convex optimization and duality; see, e.g., \cite{bertsekas2009convex,rockafellar1970convex}. Throughout the paper, $V$ denotes a finite-dimensional Hilbert space and $V^*$ denotes its dual space. The duality pairing between $\chi\in V^*$ and $x\in V$ is denoted by $\langle \chi,x\rangle$. By the Riesz representation theorem, $V$ can be identified with $V^*$, in which case the duality pairing coincides with the inner product $(\cdot,\cdot)$.

\paragraph{Bregman divergence.}
For a continuously differentiable function $f$, the {\it Bregman divergence} is
\begin{equation*}
D_f(y,x):=f(y)-f(x)-\langle \nabla f(x),y-x\rangle.
\end{equation*}
For $f\in \mathcal C^1(V)$, convexity is equivalent to $D_f(y,x)\ge 0$ for all $x,y\in V$. If $f$ is strictly convex, then $D_f(y,x)=0$ if and only if $x=y$. In general, $D_f$ is not symmetric. Its symmetrization satisfies
$$
D_f(y,x)+D_f(x,y)=\langle \nabla f(y)-\nabla f(x),y-x\rangle.
$$
A key tool in the analysis is the three-point identity~\cite{chen1993convergence}:
\begin{equation}\label{eq:Bregmanidentity}
\langle \nabla f(z)-\nabla f(y),x-y\rangle
=
D_f(x,y)+D_f(y,z)-D_f(x,z).
\end{equation}
For a Legendre function $\phi$, the Bregman divergences of $\phi$ and its Fenchel conjugate $\phi^*$ satisfy
\begin{equation}\label{eq:DfDf*}
D_\phi(x,y)=D_{\phi^*}(\nabla\phi(y),\nabla\phi(x)),
\end{equation}
with reversed arguments. The gradient of the Bregman divergence with respect to the first variable is
\begin{equation}\label{eq:gradD}
\begin{aligned}
&\nabla D_f(\cdot,x)=\nabla f(\cdot)-\nabla f(x),\\
&\nabla D_{\phi^*}(\cdot,\chi)=\nabla\phi^*(\cdot)-\nabla\phi^*(\chi).
\end{aligned}
\end{equation}

\paragraph{Mirror duality.}
Throughout the paper, we assume $\nabla\phi^*(0)=0$ and $\nabla\phi(0)=0$, which can be achieved by a shift. The mirror function $\phi$ is assumed to be Legendre: it is closed, proper, strictly convex, differentiable on $\operatorname{int}\operatorname{dom}\phi$, and its gradient map is one-to-one from $\operatorname{int}\operatorname{dom}\phi$ onto $\operatorname{int}\operatorname{dom}\phi^*$. Hence
$$
(\nabla\phi)^{-1}=\nabla\phi^*.
$$
When we use $(\nabla f)^{-1}=\nabla f^*$ in the dual formulation, we assume the corresponding Legendre-type condition for $f$ on the relevant domain.

For the primal problem $\min_{x\in V} f(x)$, consider the dual mirror descent step for $\min_{\chi\in V^*}\phi^*(\chi)$ with $f^*$ as the mirror function:
\begin{equation}\label{eq:dualmd}
\nabla f^*(\chi_{k+1})=\nabla f^*(\chi_k)-\alpha_k\nabla\phi^*(\chi_k).
\end{equation}
This scheme is formally dual to the primal mirror descent update~\eqref{eq:MD}, and we call it dual mirror descent (DMD). Identifying $x_k=\nabla f^*(\chi_k)$, or equivalently $\chi_k=\nabla f(x_k)$, transforms~\eqref{eq:dualmd} into the primal update~\eqref{eq:DMD}. 
Thus, parts of the mirror descent analysis can be transferred to DMD. 

\paragraph{Relative smoothness in the dual space.}
Relative smoothness and relative convexity, first introduced in~\cite{lu2018relatively}, are standard assumptions for mirror descent methods.

\noindent\textbf{Assumption (A1).} There exist constants $0\le \mu\le L$ such that
\begin{equation}\label{eq:relativesmoothness}
\mu D_\phi(x,y)\le D_f(x,y)\le L D_\phi(x,y).
\end{equation}
Assumption (A1) is equivalent to the convexity of $f-\mu\phi$ and $L\phi-f$.

In parallel, DMD requires the following dual relative smoothness and convexity condition~\cite{maddison2021dual}, also called anisotropic smoothness in~\cite{laude2025anisotropic}.

\noindent\textbf{Assumption (A1$^*$).} There exist constants $0\le \mu\le L$ such that
\begin{equation}\label{eq:Lmu-star-primal}
\mu D_f(y,x)\le D_{\phi^*}(\nabla f(x),\nabla f(y))\le L D_f(y,x).
\end{equation}
Assumption (A1$^*$) is the standard relative smoothness and convexity condition for the dual pair $(\phi^*,f^*)$: $\phi^*-\mu f^*$ and $Lf^*-\phi^*$ are convex. Its equivalence to anisotropic smoothness was proved in~\cite{laude2025anisotropic} using Moreau envelope arguments.

The constants in (A1) and (A1$^*$) need not be the same. Indeed, the two assumptions are independent, and we specify which one is used in each result. When $\phi(x)=\frac12\|x\|^2$, both reduce to the usual Euclidean smoothness and strong convexity assumptions. For nonquadratic $\phi$, Assumptions  (A1) and (A1$^*$) can capture non-Euclidean curvature even when $\nabla f$ is not Lipschitz. The assumption is useful but restrictive: an adapted mirror map must be available, and both $\nabla\phi$ and $\nabla\phi^*$ must be efficient to compute.

\section{Accelerated Primal--Dual Mirror Descent}

\paragraph{Continuous-time dynamics.}
We combine the primal and dual mirror dynamics
\begin{equation}\label{eq:flow}
\begin{aligned}
x' &= y - x - \beta \nabla \phi^*(\nabla f(x)), \\
\bigl(\nabla \phi(y)\bigr)' &= \nabla \phi(x) - \nabla \phi(y) - \tfrac{1}{\mu}\nabla f(x).
\end{aligned}
\end{equation}
This flow preconditions the HNAG flow~\citep{chen_first_2019} through a primal--dual mirror coupling. Here $\mu>0$ is the relative convexity constant, so that $f-\mu\phi$ is convex, and $\beta>0$ controls the dual preconditioning strength. The $x$-dynamics contain a dual mirror step, while the $y$-dynamics follow a primal mirror update. The equilibrium of~\eqref{eq:flow} is $x=y=x^\star$.

To analyze stability, define the Lyapunov energy
\begin{equation}\label{eq:Lya}
E(z):=D_f(x,x^\star)+\mu D_\phi(x^\star,y), \qquad z=(x,y).
\end{equation}
The identity $D_\phi(x^\star,y)=D_{\phi^*}(\nabla\phi(y),\nabla\phi(x^\star))$ will be used to differentiate the second term.

\begin{lemma}[Energy dissipation]\label{lm:SL}
The energy~\eqref{eq:Lya} along the flow~\eqref{eq:flow} satisfies
\begin{equation}\label{eq:SL}
\begin{aligned}
\langle \nabla E(z),z'\rangle
={}&-E(z)-\beta\langle \nabla f(x),\nabla\phi^*(\nabla f(x))\rangle
-D_{f-\mu\phi}(x^\star,x)-\mu D_\phi(y,x).
\end{aligned}
\end{equation}
Consequently, if $f-\mu\phi$ is convex, then
$$
E(z(t))\le e^{-t}E(z(0)), \qquad t\ge 0.
$$
\end{lemma}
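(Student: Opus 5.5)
The plan is to differentiate $E$ along the flow by the chain rule, expressing the first term in the primal variable $x$ and the second in the dual variable $\chi=\nabla\phi(y)$. Then I would reorganize the result with the three-point identity~\eqref{eq:Bregmanidentity}, and close with a Gr\"onwall argument.

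\textbf{Step 1: gradients of $E$.} Write $E=D_f(x,x^\star)+\mu D_{\phi^*}(\chi,\nabla\phi(x^\star))$ using~\eqref{eq:DfDf*}. By~\eqref{eq:gradD}, $\nabla_x E=\nabla f(x)-\nabla f(x^\star)=\nabla f(x)$, since $x^\star$ is a minimizer. Also $\nabla_\chi E=\mu(\nabla\phi^*(\chi)-\nabla\phi^*(\nabla\phi(x^\star)))=\mu(y-x^\star)$, using $(\nabla\phi)^{-1}=\nabla\phi^*$. Hence $\langle\nabla E(z),z'\rangle=\langle\nabla f(x),x'\rangle+\mu\langle(\nabla\phi(y))',\,y-x^\star\rangle$.

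\textbf{Step 2: insert the flow~\eqref{eq:flow}.} Substituting gives
\[
\langle \nabla f(x),y-x\rangle-\beta\langle\nabla f(x),\nabla\phi^*(\nabla f(x))\rangle+\mu\langle\nabla\phi(x)-\nabla\phi(y),y-x^\star\rangle-\langle\nabla f(x),y-x^\star\rangle .
\]
The $y$-terms carrying $\nabla f(x)$ cancel, which leaves $\langle\nabla f(x),x^\star-x\rangle$. Since $\nabla f(x^\star)=0$, this equals $-D_f(x,x^\star)-D_f(x^\star,x)$ by the symmetrization identity.

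For the mirror term, apply~\eqref{eq:Bregmanidentity} to $\phi$ with the points $(x^\star,y,x)$:
\[
\langle\nabla\phi(x)-\nabla\phi(y),y-x^\star\rangle=-D_\phi(x^\star,y)-D_\phi(y,x)+D_\phi(x^\star,x).
\]
Collecting the terms, $-D_f(x,x^\star)-\mu D_\phi(x^\star,y)=-E(z)$, and $-D_f(x^\star,x)+\mu D_\phi(x^\star,x)=-D_{f-\mu\phi}(x^\star,x)$. This yields~\eqref{eq:SL}.

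\textbf{Step 3: decay.} If $f-\mu\phi$ is convex, then $D_{f-\mu\phi}\ge0$, and $\mu D_\phi(y,x)\ge0$ by convexity of $\phi$. The remaining term is nonnegative by monotonicity of $\nabla\phi^*$ together with $\nabla\phi^*(0)=0$:
\[
\langle\chi,\nabla\phi^*(\chi)\rangle=\langle\chi-0,\nabla\phi^*(\chi)-\nabla\phi^*(0)\rangle\ge0 .
\]
So $\frac{d}{dt}E(z(t))\le -E(z(t))$, and Gr\"onwall's inequality gives $E(z(t))\le e^{-t}E(z(0))$.

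The main obstacle is the bookkeeping in Step 2. One must pick the correct ordering of arguments in the three-point identity so that the leftover $D_\phi(x^\star,x)$ combines with $D_f(x^\star,x)$ with the right sign to form $D_{f-\mu\phi}(x^\star,x)$. One must also confirm that the $\langle\nabla f(x),y\rangle$ terms cancel exactly; this is precisely what the scaling $1/\mu$ in the $y$-equation is designed to achieve.
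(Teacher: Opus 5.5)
Your proposal is correct and follows the paper's proof essentially step for step. Both arguments use the chain rule in the dual variable $\nabla\phi(y)$, the cancellation of the $\langle\nabla f(x),y\rangle$ terms, the three-point identity for $\phi$ at $(x^\star,y,x)$, and Gr\"onwall. You additionally justify that $-\beta\langle\nabla f(x),\nabla\phi^*(\nabla f(x))\rangle\le 0$ via monotonicity of $\nabla\phi^*$ and $\nabla\phi^*(0)=0$, a point the paper's proof passes over when it mentions only the last two terms.
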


\begin{proof}
Let $\eta=\nabla\phi(y)$. Differentiating $E$ gives
$$
\partial_x E=\nabla f(x),
\qquad
\partial_\eta E
=\mu\bigl(\nabla\phi^*(\nabla\phi(y))-\nabla\phi^*(\nabla\phi(x^\star))\bigr)
=\mu(y-x^\star).
$$
Then by the chain rule, we obtain
$$
\begin{aligned}
\frac{\dd}{\dd t}E(z)
={}&\dual{\nabla E(z),z'}\\
={}&\dual{\nabla f(x),y-x-\beta\nabla\phi^*(\nabla f(x))}
+\mu\dual{y-x^\star,\nabla\phi(x)-\nabla\phi(y)-\tfrac1\mu\nabla f(x)}\\
={}&-\dual{\nabla f(x),x-x^\star}
-\beta\dual{\nabla f(x),\nabla\phi^*(\nabla f(x))}
+ \mu\dual{y-x^\star,\nabla\phi(x)-\nabla\phi(y)}
\end{aligned}
$$
which gives~\eqref{eq:SL} by the three-point identity~\eqref{eq:Bregmanidentity}. If $f-\mu\phi$ is convex, then the last two terms in~\eqref{eq:SL} are nonpositive. Hence $\frac{\dd}{\dd t}E(z(t))\le -E(z(t))$, and Gr\"onwall's inequality gives the result.
\end{proof}

\paragraph{Discretization.}
We discretize~\eqref{eq:flow} by the following implicit--explicit scheme:
\begin{subequations}\label{eq:scheme}
\begin{align}
x_{k+1}-x_k
&= \alpha\bigl(y_k-x_{k+1}-\beta\nabla\phi^*(\nabla f(x_k))\bigr),
\label{eq:scheme-x}\\
\nabla\phi(y_{k+1})-\nabla\phi(y_k)
&= \alpha\bigl(\nabla\phi(x_{k+1})-\nabla\phi(y_{k+1})\bigr)
-\tfrac{\alpha}{\mu}\nabla f(x_{k+1}).
\label{eq:scheme-y}
\end{align}
\end{subequations}
The gradient in the $x$-dynamics is treated explicitly in~\eqref{eq:scheme-x}, while $x_{k+1}$ and $y_{k+1}$ are treated implicitly to preserve the energy structure. Equivalently,~\eqref{eq:scheme-y} is a mirror-descent step, and $y_{k+1}$ solves
\begin{equation}\label{eq:yprox}
y_{k+1}\in\arg\min_y (1+\alpha)\phi(y)
-
\left\langle
\alpha\nabla\phi(x_{k+1})+\nabla\phi(y_k)-\tfrac{\alpha}{\mu}\nabla f(x_{k+1}),\,y
\right\rangle.
\end{equation}
We assume that $\phi$ is chosen so that the subproblem~\eqref{eq:yprox} can be computed efficiently.

\begin{lemma}[Descent identity]\label{lem:descentidentity}
The iterates of~\eqref{eq:scheme} satisfy
\begin{equation}\label{eq:descent}
\begin{split}
(1+\alpha)E(z_{k+1})-E(z_k)
={}&-D_f(x_k,x_{k+1})-\mu D_\phi(y_{k+1},y_k)\\
&+\alpha\dual{\nabla f(x_{k+1}),y_k-y_{k+1}}
-\alpha\beta\dual{\nabla f(x_{k+1}),\nabla\phi^*(\nabla f(x_k))}\\
&-\alpha D_{f-\mu\phi}(x^\star,x_{k+1})
-\alpha\mu D_\phi(y_{k+1},x_{k+1}).
\end{split}
\end{equation}
\end{lemma}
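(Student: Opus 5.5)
Split $E(z_{k+1})-E(z_k)$ into its two Bregman pieces and expand each difference with the three-point identity~\eqref{eq:Bregmanidentity}. This mirrors the continuous proof of Lemma~\ref{lm:SL}: each discrete difference becomes an inner product with the increment, which we replace using the scheme~\eqref{eq:scheme}, plus a nonpositive Bregman remainder.

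\textbf{The $f$-part.} Apply~\eqref{eq:Bregmanidentity} with $(x,y,z)=(x_{k+1},\,x^\star\text{-free form})$:
$$D_f(x_{k+1},x^\star)-D_f(x_k,x^\star)=\dual{\nabla f(x_{k+1}),x_{k+1}-x_k}-D_f(x_k,x_{k+1}),$$
which follows directly from the definition of $D_f$. Substitute~\eqref{eq:scheme-x} in the form $x_{k+1}-x_k=\alpha(y_k-x_{k+1})-\alpha\beta\nabla\phi^*(\nabla f(x_k))$. This produces the term $-\alpha\beta\dual{\nabla f(x_{k+1}),\nabla\phi^*(\nabla f(x_k))}$. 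It also produces $\alpha\dual{\nabla f(x_{k+1}),y_k-x_{k+1}}$, which we split as
$$\alpha\dual{\nabla f(x_{k+1}),y_k-y_{k+1}}+\alpha\dual{\nabla f(x_{k+1}),y_{k+1}-x_{k+1}}.$$

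\textbf{The $\phi$-part.} Use~\eqref{eq:Bregmanidentity} again:
$$\mu D_\phi(x^\star,y_{k+1})-\mu D_\phi(x^\star,y_k)=\mu\dual{\nabla\phi(y_{k+1})-\nabla\phi(y_k),y_{k+1}-x^\star}-\mu D_\phi(y_{k+1},y_k).$$
Inserting~\eqref{eq:scheme-y} gives
$$\alpha\mu\dual{\nabla\phi(x_{k+1})-\nabla\phi(y_{k+1}),y_{k+1}-x^\star}-\alpha\dual{\nabla f(x_{k+1}),y_{k+1}-x^\star}.$$
The second of these combines with $\alpha\dual{\nabla f(x_{k+1}),y_{k+1}-x_{k+1}}$ from the $f$-part to give $-\alpha\dual{\nabla f(x_{k+1}),x_{k+1}-x^\star}$. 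This is the discrete analogue of the cancellation in Lemma~\ref{lm:SL}.

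\textbf{Collecting into the $\alpha$-terms.} It remains to show
$$-\alpha\dual{\nabla f(x_{k+1}),x_{k+1}-x^\star}+\alpha\mu\dual{\nabla\phi(x_{k+1})-\nabla\phi(y_{k+1}),y_{k+1}-x^\star}$$
equals $-\alpha E(z_{k+1})-\alpha D_{f-\mu\phi}(x^\star,x_{k+1})-\alpha\mu D_\phi(y_{k+1},x_{k+1})$. Use $\nabla f(x^\star)=0$ and the symmetrization identity to write
$$-\dual{\nabla f(x_{k+1}),x_{k+1}-x^\star}=-D_f(x_{k+1},x^\star)-D_f(x^\star,x_{k+1}).$$
Apply~\eqref{eq:Bregmanidentity} with $(x,y,z)=(x^\star,y_{k+1},x_{k+1})$ for $\phi$:
$$\dual{\nabla\phi(x_{k+1})-\nabla\phi(y_{k+1}),y_{k+1}-x^\star}=-D_\phi(x^\star,y_{k+1})-D_\phi(y_{k+1},x_{k+1})+D_\phi(x^\star,x_{k+1}).$$
Combining these, $-D_f(x^\star,x_{k+1})+\mu D_\phi(x^\star,x_{k+1})=-D_{f-\mu\phi}(x^\star,x_{k+1})$ by linearity of $D$ in its generating function. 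The remaining terms assemble into $-\alpha E(z_{k+1})$, which we move to the left-hand side to obtain the factor $(1+\alpha)$.

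The main obstacle is bookkeeping: we must pick the correct orientation of each three-point identity, so that the leftover divergences are exactly $D_f(x_k,x_{k+1})$ and $D_\phi(y_{k+1},y_k)$ rather than their reverses. We must also route the $y_k-x_{k+1}$ term through $y_{k+1}$ so that the remaining inner product is exactly $\alpha\dual{\nabla f(x_{k+1}),y_k-y_{k+1}}$.
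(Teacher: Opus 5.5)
Your proof is correct and is essentially the paper's argument. Your two three-point identities are exactly the expansion $E(z_{k+1})-E(z_k)=\langle\nabla E(z_{k+1}),z_{k+1}-z_k\rangle-D_E(z_k,z_{k+1})$ in the variables $(x,\nabla\phi(y))$, and your final regrouping is the computation of Lemma~\ref{lm:SL} evaluated at $z_{k+1}$. The only difference is packaging: the paper writes the scheme as an implicit Euler step plus a correction and cites Lemma~\ref{lm:SL}, whereas you substitute the scheme directly. Like the paper, your $f$-part identity relies on $\nabla f(x^\star)=0$, which the paper uses only implicitly.
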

%
\begin{proof}
We present an outline here and refer to Appendix for details. The difference of Lyapunov functions 
$$
E(z_{k+1})-E(z_k) = \dual{\nabla E(z_{k+1}),z_{k+1}-z_k} -D_E(z_k,z_{k+1}).
$$
The first line on RHS of \eqref{eq:descent} is simply $-D_E(z_k,z_{k+1})$. 
Let $G$ denote the vector field in~\eqref{eq:flow}. The scheme~\eqref{eq:scheme} can be written as a correction of the implicit Euler step:
$$z_{k+1}-z_k=\alpha G(z_{k+1})+\binom{\alpha(y_k-y_{k+1})-\alpha\beta\bigl(\nabla\phi^*(\nabla f(x_k))-\nabla\phi^*(\nabla f(x_{k+1}))\bigr)}{0}.$$
We follow Lemma \ref{lm:SL} to calculate $\dual{\nabla E(z_{k+1}), \alpha G(z_{k+1})}$. The discrepancy terms are in the second and third lines in \eqref{eq:descent}. 
%
%
\end{proof}

\section{Adaptive Accelerated Mirror Descent}

We now introduce AAMD, which adaptively updates $(\alpha_k,\beta_k,\mu_k)$ by monitoring the slack in the Lyapunov dissipation inequality.

\paragraph{Accumulated Stability and Reduced Line Search.}
We first consider the case where $\mu>0$ is known and rewrite the remaining parameters as $(\alpha_k,L_k)$ with $1/L_k=\alpha_k\beta_k$.


\begin{lemma}[Descent inequality]\label{lem:descent}
The iterates of~\eqref{eq:scheme} satisfy
\begin{equation}\label{eq:descentbound}
E(z_{k+1})\le \frac{1}{1+\alpha_k}E(z_k)+b_k,
\qquad
b_k=b_k^{(1)}+b_k^{(2)}+b_k^{(3)},
\end{equation}
where
\begin{subequations}\label{eq:residuals}
\begin{align}
b_k^{(1)}
&=
\frac{1}{L_k}D_{\phi^*}(\nabla f(x_{k+1}),\nabla f(x_k))
-
D_f(x_k,x_{k+1}),
\\
b_k^{(2)}
&=
\alpha_k\dual{\nabla f(x_{k+1}),y_k-y_{k+1}}
-
\frac{1}{L_k}D_{\phi^*}(\nabla f(x_{k+1}),0)
-
\mu D_\phi(y_{k+1},y_k),
\\
b_k^{(3)}
&=
-\frac{1}{L_k}D_{\phi^*}(0,\nabla f(x_k))
-
\alpha_k\mu D_\phi(y_{k+1},x_{k+1}).
\end{align}
\end{subequations}
\end{lemma}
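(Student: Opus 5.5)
The plan is to start from the exact descent identity of Lemma~\ref{lem:descentidentity} with $\alpha=\alpha_k$, $\beta=\beta_k$, and rewrite its right-hand side in terms of the residuals \eqref{eq:residuals}. The only term of \eqref{eq:descent} that does not already appear in $b_k$ is the cross term $-\alpha_k\beta_k\dual{\nabla f(x_{k+1}),\nabla\phi^*(\nabla f(x_k))}$. Since $\alpha_k\beta_k=1/L_k$, this term equals $-\frac{1}{L_k}\dual{\nabla f(x_{k+1}),\nabla\phi^*(\nabla f(x_k))}$.

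I will expand it with the three-point identity \eqref{eq:Bregmanidentity} applied to $\phi^*$, taking $z=\nabla f(x_k)$, $y=0$, $x=\nabla f(x_{k+1})$, and using $\nabla\phi^*(0)=0$. This gives
\begin{equation*}
\dual{\nabla f(x_{k+1}),\nabla\phi^*(\nabla f(x_k))}
=D_{\phi^*}(\nabla f(x_{k+1}),0)+D_{\phi^*}(0,\nabla f(x_k))-D_{\phi^*}(\nabla f(x_{k+1}),\nabla f(x_k)).
\end{equation*}
Multiplying by $-1/L_k$ and distributing the three pieces produces exactly the $\phi^*$-terms of $b_k^{(1)},b_k^{(2)},b_k^{(3)}$. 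The remaining terms $-D_f(x_k,x_{k+1})$, $-\mu D_\phi(y_{k+1},y_k)$, $\alpha_k\dual{\nabla f(x_{k+1}),y_k-y_{k+1}}$ and $-\alpha_k\mu D_\phi(y_{k+1},x_{k+1})$ are copied directly from \eqref{eq:descent}. The result is the exact relation
\begin{equation*}
(1+\alpha_k)E(z_{k+1})-E(z_k)=b_k-\alpha_k D_{f-\mu\phi}(x^\star,x_{k+1}).
\end{equation*}

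Next I use the standing convexity of $f-\mu\phi$, which holds since $\mu>0$ is the relative convexity constant. This gives $D_{f-\mu\phi}(x^\star,x_{k+1})\ge0$, and hence
\begin{equation*}
E(z_{k+1})=\frac{E(z_k)}{1+\alpha_k}+\frac{b_k-\alpha_k D_{f-\mu\phi}(x^\star,x_{k+1})}{1+\alpha_k}.
\end{equation*}
To reach \eqref{eq:descentbound} as stated, I must show
\begin{equation*}
b_k-\alpha_k D_{f-\mu\phi}(x^\star,x_{k+1})\le(1+\alpha_k)b_k,
\end{equation*}
that is, $-D_{f-\mu\phi}(x^\star,x_{k+1})\le b_k$. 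I split into two cases.
\begin{itemize}
\item[(i)] If $b_k\ge0$, the inequality is immediate, because the left side is nonpositive. In this case $\frac{b_k}{1+\alpha_k}\le b_k$ also holds directly.
\item[(ii)] If $b_k<0$, the claim is strictly stronger than the divided form. It holds if and only if the negative residual is dominated in size by the retained dissipation $D_{f-\mu\phi}(x^\star,x_{k+1})$.
\end{itemize}

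Case (ii) is the main obstacle. The residual $b_k$ involves only $x_k,x_{k+1},y_k,y_{k+1}$, whereas the dissipation term involves $x^\star$, so the exact identity gives no generic comparison between them. I therefore expect that the identity above does not yield the stated bound when $b_k<0$ without an additional ingredient.

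My first attempt will be to look for a way to absorb $\alpha_k b_k$ using the $\alpha_k$-weighted pieces inside $b_k$, in particular $b_k^{(3)}$ and the cross term in $b_k^{(2)}$. If such an absorption exists, it should come from the $y$-update \eqref{eq:scheme-y}, which ties $y_k-y_{k+1}$ to $\nabla f(x_{k+1})$. If no absorption is possible, the honest conclusion is the following. The argument establishes \eqref{eq:descentbound} verbatim whenever $b_k\ge0$. When $b_k<0$, it establishes the bound with $b_k$ replaced by $b_k/(1+\alpha_k)$, which is all that the accumulated-budget argument downstream uses, since there only the sign of the cumulative sum matters.
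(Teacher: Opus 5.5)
Your derivation matches the paper's proof. You start from the identity of Lemma~\ref{lem:descentidentity}, expand the cross term with the three-point identity for $\phi^*$ (using $\nabla\phi^*(0)=0$ and $\alpha_k\beta_k=1/L_k$), and drop $-\alpha_k D_{f-\mu\phi}(x^\star,x_{k+1})\le 0$. The exact relation you get, $(1+\alpha_k)E(z_{k+1})-E(z_k)=b_k-\alpha_k D_{f-\mu\phi}(x^\star,x_{k+1})$, is correct. The paper then just says ``rearranging,'' but rearranging actually gives $E(z_{k+1})\le\frac{1}{1+\alpha_k}\bigl(E(z_k)+b_k\bigr)$, not \eqref{eq:descentbound} as printed.

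Your worry in case (ii) is justified. Drop the planned ``absorption'' attempt: it cannot succeed, because the printed bound is false when $b_k<0$. Whenever $f-\mu\phi$ is affine, the dropped term vanishes and the relation is an equality. The printed form is then equivalent to $\alpha_k b_k\ge 0$.

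Here is a concrete counterexample. Take $V=\mathbb R$, $\phi(x)=\tfrac12x^2$, and $f=\mu\phi$ with $\mu=1$, so $x^\star=0$ and $f-\mu\phi\equiv 0$ is convex. Take $L_k=4$, $\alpha_k=\sqrt{\mu/L_k}=\tfrac12$, $x_k=1$, $y_k=0$.
\begin{itemize}
\item The scheme gives $x_{k+1}=\tfrac12$ and $y_{k+1}=0$, so $E(z_k)=\tfrac12$ and $E(z_{k+1})=\tfrac18$.
\item The residuals are $b_k^{(1)}=-\tfrac{3}{32}$, $b_k^{(2)}=-\tfrac{1}{32}$, $b_k^{(3)}=-\tfrac{3}{16}$, so $b_k=-\tfrac{5}{16}$.
\item The printed bound would require $\tfrac18\le\frac{1}{1+\alpha_k}E(z_k)+b_k=\tfrac13-\tfrac{5}{16}=\tfrac{1}{48}$, which fails.
\item The divided form $\frac{1}{1+\alpha_k}\bigl(E(z_k)+b_k\bigr)=\tfrac18$ holds with equality.
\end{itemize}

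The divided form is also exactly what Theorem~\ref{thm:accumulative} uses. Its recursion $p_k=\frac{1}{1+\alpha_k}(p_{k-1}+b_k)$ comes from $E(z_{k+1})\le\frac{1}{1+\alpha_k}\bigl(E(z_k)+b_k\bigr)$. The printed form would instead produce $\frac{p_{k-1}}{1+\alpha_k}+b_k$ in the induction.

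So commit to the statement $(1+\alpha_k)E(z_{k+1})\le E(z_k)+b_k$. Your argument already proves it completely, and nothing further is needed.
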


\begin{proof}
Let $g_k=\nabla f(x_k)$. Dropping the negative terms $-\alpha D_{f-\mu\phi}(x^\star,x_{k+1})$ in the last line of~\eqref{eq:descent} and applying the three-point identity gives
$$
-\dual{g_{k+1},\nabla\phi^*(g_k)}
=
-D_{\phi^*}(g_{k+1},0)
-D_{\phi^*}(0,g_k)
+
D_{\phi^*}(g_{k+1},g_k).
$$
Rearranging the terms yields~\eqref{eq:descentbound}.
\end{proof}

The resulting algorithm is summarized in Algorithm~\ref{alg:A2MD}. We use a modified adaptive backtracking strategy from~\cite{cavalcanti2024adaptive}. The perturbation $b_k$ consists of three computable parts. The term $b_k^{(3)}$ is always nonpositive and acts as a stability buffer generated by the dual mirror step. When $p_k>0$, the current parameters fail to satisfy the descent condition, and we distinguish two cases. 

If $b_k^{(1)}>0$, then the local smoothness estimate $L_k$ is too small. We update
$$
L_k
\gets
\max\left\{
c_1L_k,\,
\frac{D_{\phi^*}(g_{k+1},g_k)}{D_f(x_k,x_{k+1})}
\right\},
\qquad
g_k=\nabla f(x_k).
$$

If $b_k^{(2)}>0$, then the stepsize $\alpha_k$ is too large. We decrease $\alpha_k$ by
$$
\alpha_k
\gets
\min\left\{
\alpha_k/c_2,\,
\frac{
\frac{1}{L_k}D_{\phi^*}(g_{k+1},0)
+\mu D_\phi(y_{k+1},y_k)
}{
\dual{g_{k+1},y_k-y_{k+1}}
}
\right\},
$$
until $p_k\le 0$. Here $c_1,c_2>1$ are fixed backtracking constants. The second formula is obtained by enforcing $b_k^{(2)}=0$. Thus the line search either increases the local curvature estimate or decreases the acceleration parameter, guaranteeing termination.

After an admissible pair $(L_k,\alpha_k)$ is found, we use the spectral estimates in Lines~13--14 as the initial guess for the next iteration. The ratio defining $L_{k+1}$ measures the local relative smoothness along the current step, while $\alpha_{k+1}=\sqrt{\mu/L_{k+1}}$ gives the corresponding accelerated parameter.

\begin{algorithm}[t]
\caption{AAMD (Adaptive Accelerated Mirror Descent)}
\label{alg:A2MD}
\begin{spacing}{1.15}
\begin{algorithmic}[1]
\STATE \textbf{Input:} $x_0,y_0\in\mathbb R^n$, $\mu>0$, and maxIt
\STATE \textbf{Set} $L_0=1$, $\alpha_0=1$, $p_{-1}=0$
\FOR{$k=0,1,\dots,$ maxIt}
\REPEAT
\STATE $x_{k+1}=\frac{1}{1+\alpha_k}\left(x_k+\alpha_ky_k-\frac{1}{L_k}\nabla\phi^*(\nabla f(x_k))\right)$
\STATE $\eta_{k+1}=\frac{1}{1+\alpha_k}\left(\nabla\phi(y_k)+\alpha_k\nabla\phi(x_{k+1})-\frac{\alpha_k}{\mu}\nabla f(x_{k+1})\right)$
\STATE $y_{k+1}=\nabla\phi^*(\eta_{k+1})$
\STATE $p_k=\frac{1}{1+\alpha_k}\left(p_{k-1}+\sum_{i=1}^3 b_k^{(i)}\right)$
\IF{$p_k>0$}
\STATE update $L_k$ and $\alpha_k$ via adaptive line search
\ENDIF
\UNTIL{$p_k\le 0$}
\STATE $L_{k+1}=D_{\phi^*}(\nabla f(x_{k+1}),\nabla f(x_k))/D_f(x_k,x_{k+1})$
\STATE $\alpha_{k+1}=\sqrt{\mu/L_{k+1}}$
\ENDFOR
\end{algorithmic}
\end{spacing}
\end{algorithm}

The global stability is controlled by the following multi-step estimate.

\begin{theorem}[Multi-step convergence]\label{thm:accumulative}
Assume $f-\mu\phi$ is convex with $\mu>0$, and let $\{z_k\}$ be generated by Algorithm~\ref{alg:A2MD}. Define the accumulated perturbation by
$$
p_{-1}=0,
\qquad
p_k=\frac{1}{1+\alpha_k}(p_{k-1}+b_k).
$$
Then the Lyapunov energy~\eqref{eq:Lya} satisfies
$$
E(z_{k+1})
\le
\left(\prod_{i=0}^k\frac{1}{1+\alpha_i}\right)E(z_0).
$$
\end{theorem}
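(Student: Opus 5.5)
The plan is to combine the one-step bound of Lemma~\ref{lem:descent} with the recursion defining $p_k$, and to use the fact that Algorithm~\ref{alg:A2MD} only accepts parameters for which $p_k\le 0$. Write $\rho_k:=\prod_{i=0}^k(1+\alpha_i)^{-1}$, with $\rho_{-1}=1$. I will prove by induction on $k$ the stronger statement
\begin{equation*}
E(z_{k+1})\le \rho_k E(z_0)+p_k,\qquad k\ge -1,
\end{equation*}
where for $k=-1$ this reads $E(z_0)\le E(z_0)+p_{-1}$, which holds because $p_{-1}=0$. Since every accepted iterate satisfies $p_k\le 0$ (the \texttt{until} condition of the repeat loop), the theorem follows at once from this inequality.

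For the inductive step, assume $E(z_k)\le \rho_{k-1}E(z_0)+p_{k-1}$. The accepted parameters $(\alpha_k,L_k)$ together with the iterates $x_{k+1},y_{k+1}$ they produce are exactly one step of scheme~\eqref{eq:scheme} with $1/L_k=\alpha_k\beta_k$. Hence Lemma~\ref{lem:descent} applies, and it requires only the convexity of $f-\mu\phi$ to drop the term $-\alpha_k D_{f-\mu\phi}(x^\star,x_{k+1})$. This gives
\begin{equation*}
E(z_{k+1})\le \frac{1}{1+\alpha_k}E(z_k)+b_k
\le \frac{1}{1+\alpha_k}\bigl(\rho_{k-1}E(z_0)+p_{k-1}\bigr)+b_k .
\end{equation*}
The first part of the right-hand side is $\rho_k E(z_0)+\frac{1}{1+\alpha_k}p_{k-1}$. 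It remains to compare $\frac{1}{1+\alpha_k}p_{k-1}+b_k$ with $p_k=\frac{1}{1+\alpha_k}(p_{k-1}+b_k)$.

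This comparison is the main obstacle. As literally written, Lemma~\ref{lem:descent} carries $b_k$ without the factor $\frac{1}{1+\alpha_k}$, whereas the recursion for $p_k$ does carry it. The fix is to return to the identity of Lemma~\ref{lem:descentidentity}. There the left-hand side is $(1+\alpha_k)E(z_{k+1})-E(z_k)$, and the right-hand side, after dropping the nonpositive term and applying the three-point rewriting used in the proof of Lemma~\ref{lem:descent}, is exactly $b_k$. So the sharp form is
\begin{equation*}
E(z_{k+1})\le \frac{1}{1+\alpha_k}\bigl(E(z_k)+b_k\bigr).
\end{equation*}
This is consistent with \eqref{eq:descentbound}, since it implies it whenever $b_k\ge0$, and it is the version to use. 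With it, the induction closes with equality in the perturbation part:
\begin{equation*}
E(z_{k+1})\le\frac{1}{1+\alpha_k}\bigl(\rho_{k-1}E(z_0)+p_{k-1}+b_k\bigr)=\rho_kE(z_0)+p_k .
\end{equation*}

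Finally, note that the line search changes $\alpha_k$, $L_k$ and $b_k$, but it exits only when $p_k\le 0$ is computed from the \emph{final} accepted values, which are the ones entering both the recursion and the estimate above. The same $\alpha_k$ therefore appears in $\rho_k$ and in $p_k$. Termination of the repeat loop, which follows since each backtrack either raises $L_k$ or lowers $\alpha_k$, is needed only so that the iterates are well defined. It is not used in the energy estimate itself.
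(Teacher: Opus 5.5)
Your proof is correct and follows the paper's argument: induct on $E(z_{k+1})\le \bigl(\prod_{i=0}^k(1+\alpha_i)^{-1}\bigr)E(z_0)+p_k$, then invoke the enforced condition $p_k\le 0$. You are also right that the induction closes only with the sharp one-step bound $E(z_{k+1})\le\frac{1}{1+\alpha_k}\bigl(E(z_k)+b_k\bigr)$, read off from the identity in Lemma~\ref{lem:descentidentity}, and not with \eqref{eq:descentbound} as literally stated; this supplies the step that the paper's one-line ``by induction'' leaves implicit.
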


\begin{proof}
By induction,
$$
E(z_{k+1})
\le
\left(\prod_{i=0}^k\frac{1}{1+\alpha_i}\right)E(z_0)
+
p_k.
$$
Since the algorithm enforces $p_k\le 0$ for all $k$, the result follows.
\end{proof}

Unlike standard line search methods, AAMD triggers backtracking only when $p_k$ becomes positive. In practice, the negative buffer term $b_k^{(3)}$ is often large enough that line search is needed only during the initial transient phase. By continuity, the parameters $(L_{k+1},\alpha_{k+1})$ are usually good initial guesses for the next iteration. In our numerical experiments, fewer than ten backtracking steps occur in total, all during the initial transient phase.

\begin{remark}
In all numerical experiments, we use $c_1=2$ and $c_2=1.5$. The method is not sensitive to these values, since they only serve as safeguards for line-search termination.
\end{remark}

\paragraph{General convexity via homotopy.}
For non-strongly convex objectives, i.e., $\mu=0$, we use a homotopy strategy that solves a sequence of $\varepsilon$-perturbed problems. This variant, denoted by \textbf{AAMD-0}, is summarized in Algorithm~\ref{alg:AAMD-0}.

\begin{algorithm}[t]
\caption{AAMD-0}\label{alg:AAMD-0}
\begin{spacing}{1.15}
\begin{algorithmic}[1]
\STATE Initialize $x_0,y_0\in\mathbb R^n$, $L_0=\alpha_0=\varepsilon_0=1$, $m=m_0$, $s=0$, $k_0=0$
\FOR{$k=0,1,2,\dots$}
\STATE Apply Algorithm~\ref{alg:A2MD} with parameter $\varepsilon_s$ for one step:
$(x_{k+1},y_{k+1})={\rm AAMD}(x_k,y_k,\varepsilon_s,1)$
\IF{$E_k\le E_{k_s}/2$ \textbf{or} $k\ge k_s+m$}
\STATE $\varepsilon_{s+1}\gets \varepsilon_s/2$, \quad $m\gets \lfloor\sqrt{2}\,m\rfloor+1$
\STATE $k_{s+1}\gets k$, \quad $s\gets s+1$
\ENDIF
\ENDFOR
\end{algorithmic}
\end{spacing}
\end{algorithm}

The key idea is simple. For a fixed $\varepsilon$, the inner iterations reduce the error to the scale $O(R\varepsilon)$, where $R$ controls the Bregman distance of the iterates:
\begin{equation}\label{eq:boundedness}
D_\phi(x^\star,x_k)\le \frac12 R^2,\qquad \forall k\ge 0.
\end{equation}
When $\varepsilon$ is halved, the number of required inner iterations increases by a factor of $\sqrt{2}$. This schedule preserves the overall $O(1/k^2)$ complexity. 

We use two criteria for halving the perturbation. The first is the decay condition
$$
E_k\le E_{k_s}/2
$$
which may be met before the prescribed $m$ inner steps when the problem is strongly convex. The second is the iteration cap $k\ge k_s+m$. Since $E_k$ is not directly observable, we use the computable stopping test
$$
\|\nabla f(x_k)\|^2\leq \|\nabla f(x_{k_s})\|^2/2.
$$
With this rule, AAMD-0 works for both $\mu=0$ and $\mu>0$ without knowing $\mu$ in advance.

\begin{theorem}\label{thm:homotopy}
Assume $f$ and $Lf^*-\phi^*$ are convex. Let $\{x_k\}$ be generated by Algorithm~\ref{alg:AAMD-0}, and assume the boundedness condition~\eqref{eq:boundedness}. Let $k_s$ be the total number of steps after $\varepsilon$ has been halved exactly $s$ times, so that $\varepsilon=2^{-s}\varepsilon_0$. Then there exists a constant $C>0$ such that
$$
\frac{E_{k_s}}{E_0}
\le
\frac{R^2+1}{\left(Ck_s+\varepsilon_0^{-1/2}\right)^2}
=
\mathcal O\left(\frac{1}{k_s^2}\right).
$$
Consequently, $\mathcal O(\sqrt{1/{\rm tol}})$ iterations suffice to achieve $E_{k_s}/E_0\le {\rm tol}$.
\end{theorem}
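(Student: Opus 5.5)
The plan is to reduce the general convex case to the strongly convex case of Theorem~\ref{thm:accumulative}, applied to the $\varepsilon$-perturbed objective, and then sum the inner iteration counts over the stages $s$.

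\textbf{Perturbed problem.} In stage $s$ we run Algorithm~\ref{alg:A2MD} with $\mu=\varepsilon_s$. Formally this targets $f_{\varepsilon}(x)=f(x)+\varepsilon D_\phi(x,x_{k_s})$, or equivalently the same energy with $\mu$ replaced by $\varepsilon$. For this objective $f_\varepsilon-\varepsilon\phi$ is convex. Moreover $Lf^*-\phi^*$ convex gives (A1$^*$) with constant $L$, which fixes the spectral estimate of $L_k$ and the line-search bounds, so that $L_k\le c_1 L$ up to an absolute factor.

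The energy is $E_\varepsilon(z)=D_f(x,x^\star)+\varepsilon D_\phi(x^\star,y)$, measured against the true minimizer $x^\star$. When $\mu=\varepsilon$ underestimates the true convexity, the descent identity (Lemma~\ref{lem:descentidentity}) produces the extra term $-\alpha D_{f-\varepsilon\phi}(x^\star,x_{k+1})$. This term is no longer sign-definite, because $f-\varepsilon\phi$ need not be convex. I would bound it by $\alpha\varepsilon D_\phi(x^\star,x_{k+1})\le \alpha\varepsilon R^2/2$ using~\eqref{eq:boundedness}.

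\textbf{One stage.} With this bound, the argument of Theorem~\ref{thm:accumulative} gives a perturbed recursion
\[
E_{k+1}\le \tfrac{1}{1+\alpha_k}\bigl(E_k+\alpha_k\varepsilon_s R^2/2\bigr).
\]
Since $\alpha_k\gtrsim\sqrt{\varepsilon_s/L}$, unrolling over $m$ inner steps yields
\[
E_{k_s+m}\le (1+c\sqrt{\varepsilon_s/L})^{-m}E_{k_s}+\varepsilon_s R^2/2 .
\]
Choosing $m_s\sim \sqrt{L/\varepsilon_s}$ makes the contraction factor at most $1/2$. The stage then ends with error of order $\tfrac12 E_{k_s}+O(\varepsilon_s R^2)$, with $\varepsilon_s=2^{-s}\varepsilon_0$. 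The early-exit test $E_k\le E_{k_s}/2$ only helps here, since it also halves the error.

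\textbf{Induction and summation.} Next I would show by induction on $s$ that $E_{k_s}/E_0\lesssim (R^2+1)\varepsilon_s$. Normalizing by $E_0$, the $\varepsilon_0$ term covers the initial energy, which accounts for the ``$+1$'' in the numerator. The counts satisfy $m_{s+1}=\lfloor\sqrt2 m_s\rfloor+1$, so
\[
k_s=\sum_{j<s}m_j\le C'\,2^{s/2}=C'\sqrt{\varepsilon_0/\varepsilon_s}.
\]
Inverting gives $\varepsilon_s^{-1/2}\ge Ck_s+\varepsilon_0^{-1/2}$ after adjusting constants. Substituting into the bound on $E_{k_s}/E_0$ gives the stated $(R^2+1)/(Ck_s+\varepsilon_0^{-1/2})^2$. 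The iteration complexity $O(\mathrm{tol}^{-1/2})$ follows directly.

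\textbf{Main obstacle.} The delicate step is making the per-stage contraction rigorous. Three issues have to be handled together:
\begin{itemize}
\item the energy changes with $\varepsilon$ between stages, since $\varepsilon_{s+1}D_\phi(x^\star,y)\le\varepsilon_s D_\phi(x^\star,y)$ is monotone in the right direction but still needs to be tracked;
\item the stabilizing buffer $p_k\le0$ has to be reset or carried across stages;
\item the adaptive $\alpha_k$ must stay bounded below by $c\sqrt{\varepsilon_s/L}$, which needs $L_k\le c_1L$ from the line search under (A1$^*$).
\end{itemize}
I would first establish the lower bound on $\alpha_k$ via the backtracking termination argument. After that, the remaining issues are bookkeeping.
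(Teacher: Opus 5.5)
Your architecture matches the paper's proof in Appendix~\ref{sec:homotopy_proof}. You run the unmodified method on $f$ with $\mu=\varepsilon_s$ and bound the now-indefinite term by $-\alpha_kD_{f-\varepsilon\phi}(x^\star,x_{k+1})\le\alpha_k\varepsilon D_\phi(x^\star,x_{k+1})\le\alpha_k\varepsilon R^2/2$ via~\eqref{eq:boundedness}. You then unroll with $p_k\le0$, induct on $E_{k_s}\le(R^2+1)\varepsilon_s$, and sum the geometric stage lengths. (Your reformulation via $f_\varepsilon=f+\varepsilon D_\phi(\cdot,x_{k_s})$ is not equivalent, since the algorithm never evaluates $\nabla f_\varepsilon$, but you never actually use it.)

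\textbf{Main gap: the lower bound on $\alpha_k$.} The gap is the step you plan to settle by ``the backtracking termination argument''. The bound $\alpha_k\gtrsim\sqrt{\varepsilon_s/L}$ does not follow from (A1$^*$) and $L_k\le c_1L$. Convexity of $Lf^*-\phi^*$ only neutralizes $b_k^{(1)}$. The $\alpha$-backtracking is triggered by $b_k^{(2)}>0$. Making $b_k^{(2)}\le0$ at $\alpha\sim\sqrt{\varepsilon/L}$ is precisely the mirror acceleration compatibility condition, Assumption~\ref{ass:compatibility}: a Young-type bound of $\alpha\langle\nabla f(x_{k+1}),y_k-y_{k+1}\rangle$ by $\frac1L D_{\phi^*}(\nabla f(x_{k+1}),0)+\varepsilon D_\phi(y_{k+1},y_k)$.
\begin{itemize}
\item It holds at $\alpha=\sqrt{\varepsilon/L}$ for quadratic $\phi$ (Theorem~\ref{thm:linearprecond}).
\item It holds with an extra factor $\sqrt{\rho_\phi}$ for smooth strongly convex $\phi$.
\item It is not implied by relative smoothness for general mirror maps. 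This is exactly the extra geometric structure without which acceleration can fail.
\end{itemize}
Even when it holds, the update $\alpha_k\gets\min\{\alpha_k/c_2,\cdot\}$ can overshoot below the threshold. The paper does not derive this bound either. Its fixed-$\varepsilon$ theorem assumes compatibility at $\alpha=\sqrt{\epsilon/L}/c_\phi$ and then uses $\alpha_k\ge\sqrt{\epsilon/L}/c_\phi$ directly; this is the ``structural lower bound'' mentioned after Theorem~\ref{thm:homotopy}. You need to add it as a hypothesis the same way, since it does not follow from the stated assumptions for general $\phi$.

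\textbf{Second issue: the contraction factor.} A per-stage contraction of $1/2$ is exactly the borderline at which your induction breaks. From $E_{k_{s+1}}\le\frac12E_{k_s}+\frac{R^2}{2}\varepsilon_s$ and $E_{k_s}\le A\varepsilon_s$ you only get $E_{k_{s+1}}\le(A+R^2)\varepsilon_{s+1}$. The constant then grows like $A+sR^2$, and you lose a $\log k_s$ factor. You need a contraction $\rho$ with $(1-2\rho)A\ge R^2$. The paper takes $\rho=1/(2(R^2+1))$, i.e.\ $m_s\approx c_\phi\sqrt{L/\varepsilon_s}\,\ln(2(R^2+1))$, or equivalently $m_0$ large enough in Algorithm~\ref{alg:AAMD-0}. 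This closes the induction at $A=R^2+1$ and only changes $C$.

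With these two repairs your argument goes through. Your additive term $\varepsilon_sR^2/2$ is actually cleaner than the paper's. Using $\sum_j(1-\theta_j)\prod_{i>j}\theta_i=1-\prod_i\theta_i$ with $\theta_i=(1+\alpha_i)^{-1}$, it needs no upper bound on $\alpha_k$, whereas the paper additionally assumes $\alpha_k\le2\sqrt{\epsilon/L}$.
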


We defer the proof to Appendix~\ref{sec:homotopy_proof}. The proof also requires a structural lower bound on the adaptive parameters to prevent $\alpha_k$ from becoming much smaller than $2/k$, which would destroy acceleration.

\paragraph{On the boundedness assumption.}
The boundedness assumption \eqref{eq:boundedness} in Theorem~\ref{thm:homotopy} is needed to convert the homotopy decrease for the regularized problems into an $O(1/k^2)$ rate for the original convex problem. It holds, for example, when the relevant Lyapunov sublevel set is bounded, when $\phi$ is coercive on the iterates, or when the feasible set is bounded. Without such a condition, a global homotopy guarantee cannot be expected. 

In Appendix~\ref{sec:homotopy_proof}, we show that this boundedness condition holds for the continuous flow~\eqref{eq:flow} and for its discretization when $\alpha$ is sufficiently small. In practice, boundedness can be enforced by projection, constraints, or a trust-region safeguard.

\paragraph{Convergence of iterates.}
The preceding results give convergence of the objective value and Lyapunov energy. If the minimizer $x^\star$ is unique and $\phi$ is strongly convex on the relevant sublevel set, then $D_\phi(x^\star,y_k)\to0$ implies $y_k\to x^\star$. The update equations and the vanishing energy then give $x_k\to x^\star$. Without uniqueness, the results imply convergence of the function values and the Bregman distance to the solution set, but not strong convergence of $\|x_k-x^\star\|$ to a fixed minimizer.

\paragraph{Extension to composite problems.}
AAMD also extends to composite problems of the form $\min_x f(x)+g(x)$, where $f$ is smooth and $g$ is convex but possibly nonsmooth. The $x$-update is replaced by a proximal gradient step adapted to the mirror geometry:
\begin{equation}\label{eq:compositeproxsubproblem}
  \begin{aligned}
x_{k+1} = \arg\min_{x\in\mathbb{R}^n} \left \{\frac{1}{L_k(1+\alpha_k)}\phi\left(-L_k(1+\alpha_k)\left(x-z_{k+1}\right)\right)+\dual{\nabla f(x_{k}),x}+g(x)\right \}
  \end{aligned}
\end{equation}
where $z_{k+1} = \frac{1}{1+\alpha_k}(x_k+\alpha_k y_k)$. The $y$-update uses the subgradient computed in the $x$-update step. The linesearch procedure for $L_k$ and the spectral update remain the same as in the smooth case. For $\mu=0$, we employ the same homotopy strategy as in the smooth case in Appendix \ref{sec:homotopy_proof}.

This allows the method to handle nonsmooth penalties and simple constraints. The resulting algorithm, AAproxMD, is summarized in Algorithm~\ref{alg:AAproxMD}. Its convergence analysis is given in Appendix~\ref{sec:composite}.

\begin{example}[LASSO problem]
Consider the LASSO problem
\begin{equation}
    \min_{x \in \mathbb{R}^d} F(x) := \frac{1}{2} \|Ax - b\|^2 + \lambda \|x\|_1, \quad \phi(x)=\frac{1}{2} x^\top D x,
\end{equation}
where $A \in \mathbb{R}^{n \times d}$ with $n<d$ is row full rank and $D=\mathrm{diag}(A^\top A)$. Since $D$ is diagonal and positive definite, the subproblem described in Algorithm \ref{alg:AAproxMD} admits a closed-form solution given by a generalized soft-thresholding operator. More details are provided in Appendix \ref{ex: Lasso}.
\end{example}

\section{Numerical Experiments}

The experiments are intended as proof-of-concept tests for the theory rather
than as a broad empirical benchmark.  We choose problems that satisfy relative
smoothness or dual relative smoothness and for which the effect of nonlinear
preconditioning, acceleration, and adaptivity can be isolated.  The current
tests therefore focus on deterministic convex problems.  Large-scale stochastic
and nonconvex optimization are outside the scope of this paper.


In these examples, AAMD is competitive with, and often faster than, the
tested baselines.

\paragraph{Regularized logistic regression}\label{sec:logistic}
Consider the regularized logistic regression problem with a \textit{symmetrized logistic mirror regularizer}
\begin{equation}\label{eq:logisticregression}
    \min_{x \in \mathbb{R}^d} f(x) := \frac{1-\mu}{n} \sum_{i=1}^n \ln(1 + \exp(-b_i a_i^\top x)) + \mu \phi(x),
\end{equation}
where $\{(a_i,b_i)\}_{i=1}^n$ are the data-label pairs with $a_i \in [-1,1]^d$ and $b_i \in \{-1,1\}$, $\mu<\max_{i=1,\cdots,n}\|a_i\|^2$ and $\phi(x)=2\sum_{j=1}^n \ln(1+\exp(x_j))-x_j$. Then $f$ is $\mu$-relatively strongly convex with respect to $\phi$~\cite{lu2018relatively}. As $f$ is a pointwise average of single LogSumExp functions $\ln(1+\exp(-b_i a_i^\top x))$ and $\phi$, by Proposition 4.9 \cite{laude2025anisotropic}, the dual smoothness constant $L=\max_{i=1,\cdots,n}\|a_i\|^2$.

We define $(a_i,b_i)$ on two datasets: (1) the Adult Census Income dataset. After normalizing and removing entries with missing values, the dataset contains
30,162 samples and 14 features. The Lipschitz constant is $4.84$, and the dual relative smoothness constant is $3.31$. Set the regularization parameter $\mu=0.1$. The results are shown in Fig.~\ref{fig:logistic_time}(a). 

(2) the mushroom dataset. After normalizing, the dataset contains $8,125$ samples and $139$ features. The Lipschitz constant is $75.93$, and the dual relative smoothness constant is $10.61$. Set the regularization parameter $\mu=0.3$. The results are shown in Fig.~\ref{fig:logistic_time}(b).

We compare AAMD and AAMD-0 with Adaptive Accelerated Gradient Descent methods (A$^2$GD)~\cite{xu2026adaptiveacceleratedgradientdescent}, anisotropic proximal gradient (anisoPG)~\cite{laude2025anisotropic,maddison2021dual}, anisotropic proximal gradient with line search (LSanisoPG)~\cite{laude2025anisotropic}, and Nesterov's accelerated gradient (NAG) method~\cite{nesterov1983method}. Since the problem is strongly convex on any compact domain, all curves show a linear convergence pattern. On both datasets, AAMD outperforms the tested baselines by a large margin.

\begin{figure}[htp]
\centering
\begin{minipage}{0.37\textwidth}
Comparing the performance curves of A$^2$GD, NAG, and anisoPG, we observe that methods using acceleration alone (NAG) or preconditioning alone (anisoPG) are consistently outperformed by A$^2$GD, highlighting the critical role of combining adaptivity and acceleration.

\smallskip

The superior performance of LSanisoPG over A$^2$GD further demonstrates the importance of combining preconditioning with adaptivity.
\end{minipage}
\hfill
\begin{minipage}{0.6\textwidth}
\centering
\includegraphics[width=0.49\textwidth]{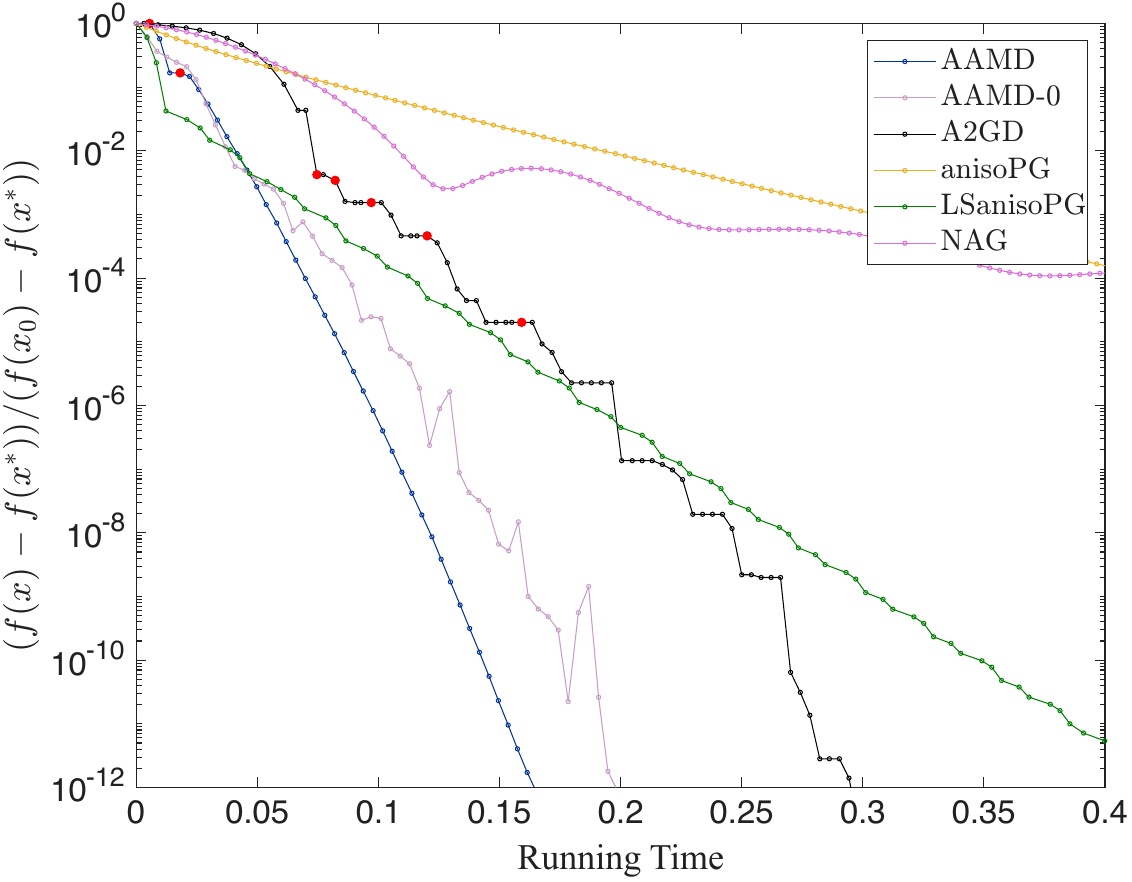}
\hfill
\includegraphics[width=0.49\textwidth]{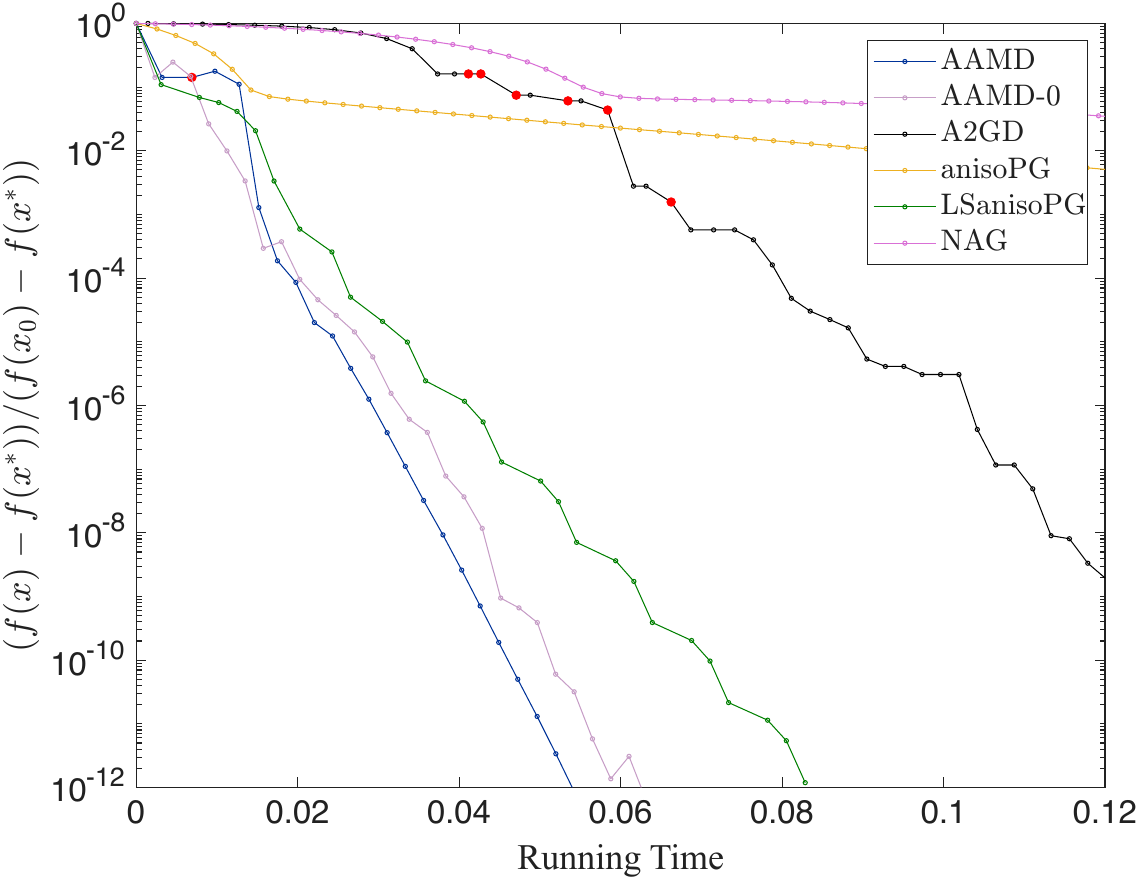}

\vspace{1mm}
\makebox[0.48\textwidth]{\small (a) Adult Census Income}
\hfill
\makebox[0.48\textwidth]{\small (b) Mushroom}

\caption{Log relative error vs. execution time. Red dots indicate the gradient steps incurred by line search.}
\label{fig:logistic_time}
\end{minipage}
\end{figure}

Finally, the fact that AAMD outperforms LSanisoPG and A$^2$GD indicates that incorporating acceleration on top of both preconditioning and adaptivity yields additional and substantial gains.

The red dots in Figures~\ref{fig:logistic_time}--\ref{fig:Lk_compare} mark all backtracking steps. In these experiments, fewer than ten backtracking steps occur in total, all during the initial transient phase. Once the local estimates of $L_k$ and $\alpha_k$ stabilize, the method proceeds without further line search. This supports the role of the accumulated perturbation budget: line search acts mainly as a safeguard, not as a repeated per-iteration procedure.

\begin{figure}[htp]
\centering
\begin{minipage}{0.45\textwidth}
We demonstrate the benefit of adaptively estimating the dual smoothness constant $L$ and the learning rate $\alpha$ in AAMD, compared with methods such as anisoPG that use a fixed dual smoothness constant. Fig.~\ref{fig:Lk_compare} shows the estimates of $L_k$ and $\alpha_k$ over the iterations. Both AAMD and LSanisoPG capture smaller values of $L_k$ than the true dual smoothness constant, shown in yellow, which leads to faster convergence. Although LSanisoPG obtains smaller $L_k$, the adaptive line-search strategy in AAMD gives a more stable optimization process. For $\alpha_k$, both AAMD and AAMD-0 choose larger step sizes than the default accelerated rate $\sqrt{\mu/L}$. 

\end{minipage}
\hfill
\begin{minipage}{0.52\textwidth}
\centering
\includegraphics[width=0.48\textwidth]{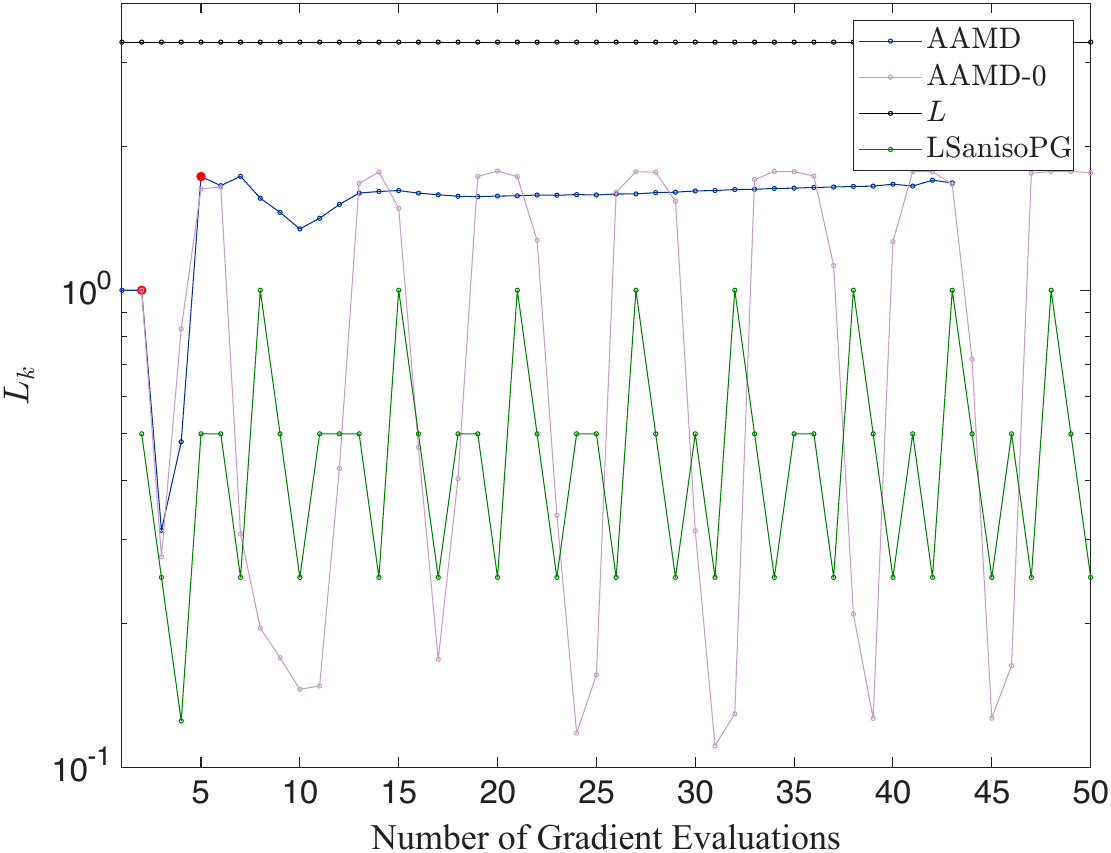}
\hfill
\includegraphics[width=0.48\textwidth]{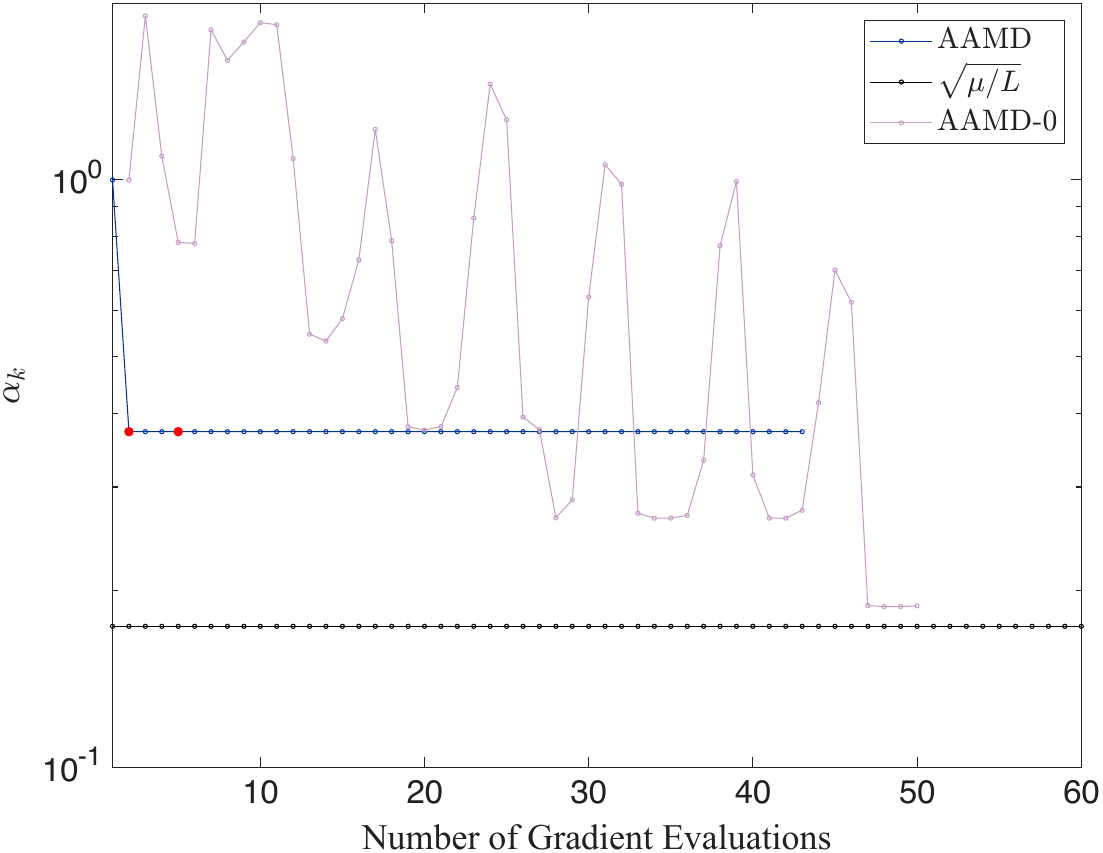}
\caption{Estimation of $L_k$ and $\alpha_k$ vs. number of gradient evaluations on the regularized logistic regression task. Red dots indicate the gradient steps incurred by line search.}
\label{fig:Lk_compare}
\end{minipage}
\end{figure}


\paragraph{Polynomial-of-norm objectives.}

In the experiment, we use the quartic benchmark from~\cite{lu2018relatively}:
\begin{equation}\label{eq:quartic-objective}
f(x)=\frac14\|Ax-b\|^4+\frac13\|Bx\|^3+\frac12\|Cx\|^2,
\end{equation}
where $A,B,C$ are positive definite matrices. 
Thus $f$ is dual relatively smooth and strongly convex with respect to
\begin{equation}\label{eq:polytonorm-mirror}
\phi(x)=\frac14\|x\|^4+\frac12\|x\|^2.
\end{equation}
We set $n=2048$. We draw $A_0\sim\mathcal N(\mathbf 0,I_{n\times n})$ and set $A=A_0A_0^\top/n$. The matrices $B$ and $C$ are generated in the same way. We take $b=0$, initialize $x_0$ with independent entries from $\operatorname{Unif}(0,0.1)$, and set $y_0=x_0$. The minimizer is $x^\star=0$.

\citet{lu2018relatively} showed that $f$ is relatively smooth and strongly convex with respect to $\phi$, verifying (A1). Here we also use the dual relative smoothness and strong convexity condition (A1$^*$). Proposition~\ref{prop:dual_relative_smoothness} in Appendix~\ref{section_polynomialnorm_proof} shows that, for polynomial-of-norm mirror maps, the highest-order term controls the dual relative smoothness at infinity, while the lowest-order term controls the local dual behavior near the origin. Hence the mirror map~\eqref{eq:polytonorm-mirror}, which matches the highest and lowest powers in~\eqref{eq:quartic-objective}, captures the geometry over the whole space. Although such constants exist, they are not needed for running AAMD, since the line search adapts to the local geometry.

To compute $\nabla\phi^*(\cdot)$, we use the method in~\cite{lu2018relatively}. For~\eqref{eq:polytonorm-mirror}, this reduces to solving a one-dimensional monotone equation for the radial variable.

We compare AAMD with mirror descent (MD), accelerated Bregman proximal gradient (ABPG)~\cite{hanzely_accelerated_2018}, anisotropic proximal gradient with line search (LSanisoPG)~\cite{laude2025anisotropic}, and Nesterov's accelerated gradient method (NAG)~\cite{nesterov1983method}. For MD and ABPG, the relative smoothness constant is computed as in~\cite{lu2018relatively}. For NAG, we estimate the Euclidean smoothness constant by
$\displaystyle
L_f=\bigl(3\|A\|^4+2\|B\|^3+\|C\|^2\bigr)\|x_0\|_2^2,
$
which can be large.

\vskip -10pt
\begin{figure}[htp]
\centering
\begin{minipage}{0.5\textwidth}
The results are shown in Fig.~\ref{fig:polynomialnorm_time}. Since the problem is not strongly convex, all methods show sublinear convergence. AAMD outperforms the tested baselines. It incurs extra gradient evaluations from line search only at the beginning, due to the naive initialization $L_0=1$. The function-gap curve has mild oscillations because the function gap captures only part of the Lyapunov energy. These oscillations could be removed by adding the monotonicity safeguard $f(x_{k+1})\le f(x_k)$, but we do not include this safeguard in AAMD so that the effect of the adaptive Lyapunov budget is shown directly.

\end{minipage}
\hfill
\begin{minipage}{0.45\textwidth}
\centering
\includegraphics[width=0.85\textwidth]{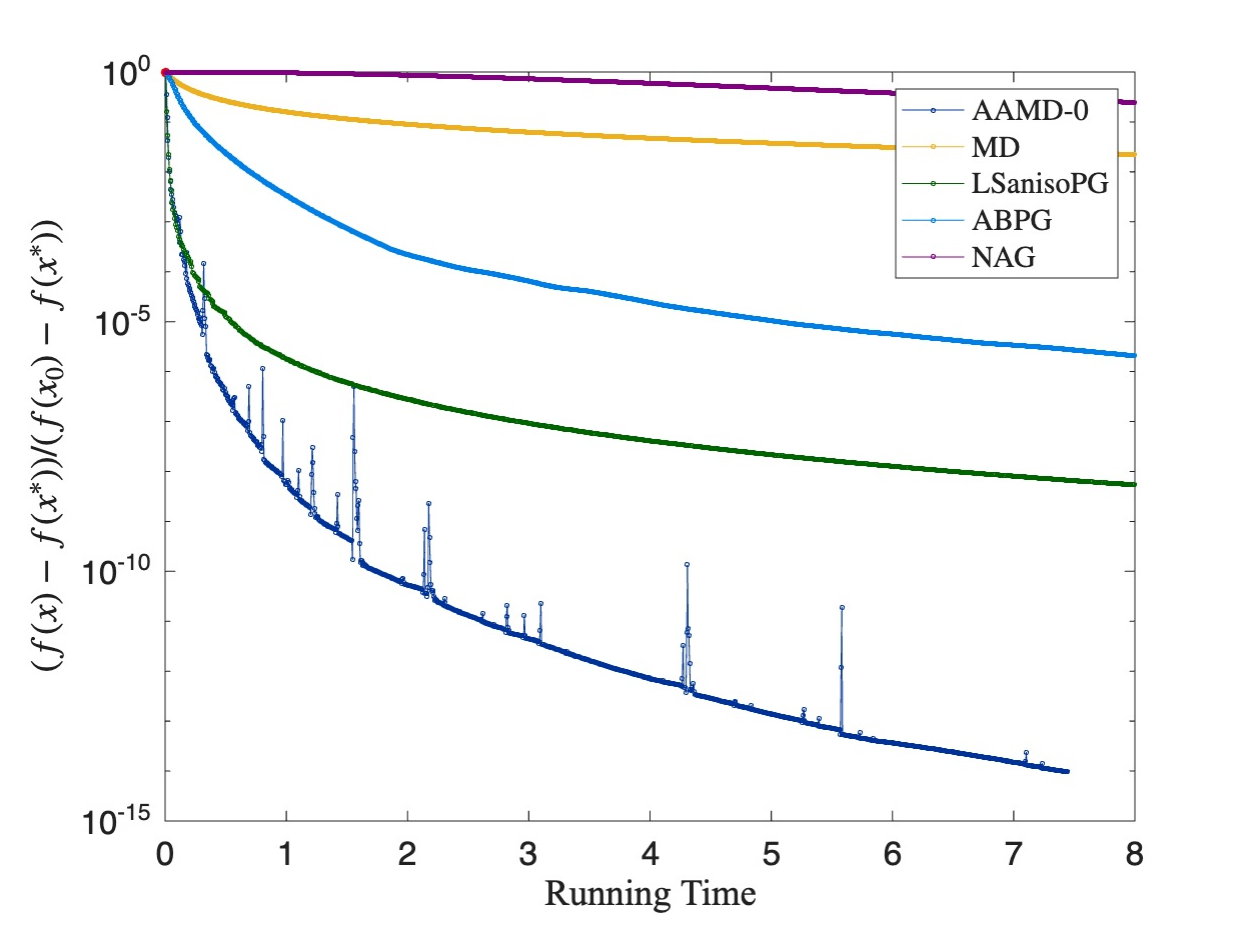}
\caption{Log relative error vs. execution time on the quartic objective. Red dots indicate the gradient steps incurred by line search.}
\label{fig:polynomialnorm_time}
\end{minipage}
\end{figure}

\bibliographystyle{plainnat}
\bibliography{references}

@article{OikonoQuanLaudePatrin2025Nonlinearly,
	author = {Oikonomidis, Konstantinos and Quan, Jan and Laude, Emanuel and Patrinos, Panagiotis},
	journal = {arXiv preprint arXiv:2502.08532},
	title = {Nonlinearly Preconditioned Gradient Methods under Generalized Smoothness},
	year = {2025}}

@misc{amsmath,
	author = {{American Mathematical Society}},
	title = {User's Guide for the \texttt{amsmath} Package (Version 2.0)},
	url = {ftp://ftp.ams.org/pub/tex/doc/amsmath/amsldoc.pdf},
	urldate = {2015-07-30},
	year = 2002}

@misc{pgfplots,
	author = {Christian Feuers\"anger},
	month = may,
	title = {Manual for Package \texttt{PGFPLOTS}},
	url = {http://sourceforge.net/projects/pgfplots},
	year = 2015}

@article{lu2018relatively,
	author = {Lu, Haihao and Freund, Robert M. and Nesterov, Yurii},
	doi = {https://doi.org/10.1137/16M1099546},
	journal = {SIAM Journal on Optimization},
	number = {1},
	pages = {333-354},
	title = {Relatively Smooth Convex Optimization by First-Order Methods, and Applications},
	volume = {28},
	year = {2018}}

@article{chen1993convergence,
	author = {Chen, Gong and Teboulle, Marc},
	doi = {https://doi.org/10.1137/0803026},
	eprint = {https://doi.org/10.1137/0803026},
	journal = {SIAM Journal on Optimization},
	number = {3},
	pages = {538-543},
	title = {Convergence Analysis of a Proximal-Like Minimization Algorithm Using {Bregman} Functions},
	volume = {3},
	year = {1993}}

@inproceedings{krichene2015accelerated,
	author = {Krichene, Walid and Bayen, Alexandre and Bartlett, Peter L},
	booktitle = {Advances in Neural Information Processing Systems},
	editor = {C. Cortes and N. Lawrence and D. Lee and M. Sugiyama and R. Garnett},
	publisher = {Curran Associates, Inc.},
	title = {Accelerated Mirror Descent in Continuous and Discrete Time},
	url = {https://proceedings.neurips.cc/paper_files/paper/2015/file/f60bb6bb4c96d4df93c51bd69dcc15a0-Paper.pdf},
	volume = {28},
	year = {2015}}

@article{hanzely_accelerated_2018,
	author = {Filip Hanzely and Peter Richt{\'a}rik and Lin Xiao},
	journal = {Computational Optimization and Applications},
	pages = {405 - 440},
	title = {Accelerated Bregman proximal gradient methods for relatively smooth convex optimization},
	url = {https://api.semanticscholar.org/CorpusID:52585212},
	volume = {79},
	year = {2018}}

@misc{chen_first_2019,
	archiveprefix = {arXiv},
	author = {Long Chen and Hao Luo},
	eprint = {1912.09276},
	primaryclass = {math.OC},
	title = {First order optimization methods based on Hessian-driven Nesterov accelerated gradient flow},
	url = {https://arxiv.org/abs/1912.09276},
	year = {2019}}

@article{yuan2023analysis,
	author = {Yuan, Ya-Xiang and Zhang, Yi},
	date = {2024/07/08},
	doi = {https://doi.org/10.1007/s40305-024-00542-3},
	id = {Yuan2024},
	isbn = {2194-6698},
	journal = {Journal of the Operations Research Society of China},
	title = {Analyze Accelerated Mirror Descent via High-Resolution {ODE}s},
	year = {2024}}

@article{Hanzely2021,
	author = {Hanzely, Filip and Richt{\'a}rik, Peter and Xiao, Lin},
	date = {2021/06/01},
	doi = {10.1007/s10589-021-00273-8},
	id = {Hanzely2021},
	isbn = {1573-2894},
	journal = {Computational Optimization and Applications},
	number = {2},
	pages = {405--440},
	title = {Accelerated Bregman proximal gradient methods for relatively smooth convex optimization},
	url = {https://doi.org/10.1007/s10589-021-00273-8},
	volume = {79},
	year = {2021}}

@book{nemirovskij1983problem,
	author = {Nemirovskij, Arkadij Semenovi{\v{c}} and Yudin, David Borisovich},
	isbn = {9780471103455},
	lccn = {82011065},
	publisher = {Wiley},
	series = {A Wiley-Interscience publication},
	title = {Problem Complexity and Method Efficiency in Optimization},
	year = {1983}}

@article{nesterov2005smooth,
	author = {Nesterov, Yurii},
	date = {2005/05/01},
	doi = {https://doi.org/10.1007/s10107-004-0552-5},
	id = {Nesterov2005},
	isbn = {1436-4646},
	journal = {Mathematical Programming},
	number = {1},
	pages = {127--152},
	title = {Smooth minimization of non-smooth functions},
	volume = {103},
	year = {2005}}

@misc{chen2025accelerated,
	archiveprefix = {arXiv},
	author = {Long Chen and Luo Hao and Jingrong Wei},
	eprint = {2505.04065},
	primaryclass = {math.OC},
	title = {Accelerated Gradient Methods Through Variable and Operator Splitting},
	url = {https://arxiv.org/abs/2505.04065},
	year = {2025}}

@article{kim2023mirror,
	author = {Kim, Jaeyeon and Park, Chanwoo and Ozdaglar, Asuman and Diakonikolas, Jelena and Ryu, Ernest K},
	journal = {arXiv preprint arXiv:2311.17296},
	title = {Mirror duality in convex optimization},
	year = {2023}}

@article{DragomTaylordAspreBolte2022Optimal,
	author = {Dragomir, Radu-Alexandru and Taylor, Adrien B. and d'Aspremont, Alexandre and Bolte, J{\'e}r{\^o}me},
	date = {2022/07/01},
	doi = {https://doi.org/10.1007/s10107-021-01618-1},
	id = {Dragomir2022},
	isbn = {1436-4646},
	journal = {Mathematical Programming},
	number = {1},
	pages = {41--83},
	title = {Optimal complexity and certification of {Bregman} first-order methods},
	volume = {194},
	year = {2022}}

@article{nesterov1983method,
	author = {Nesterov, Yurii},
	journal = {Doklady Akademii Nauk SSSR},
	number = {3},
	pages = {543--547},
	title = {A method of solving a convex programming problem with convergence rate $O\bigl(\frac1{k^2}\bigr)$},
	volume = {269},
	year = {1983}}

@article{nemirovski2004prox,
	author = {Nemirovski, Arkadi},
	doi = {10.1137/S1052623403425629},
	journal = {SIAM Journal on Optimization},
	number = {1},
	pages = {229-251},
	title = {Prox-Method with Rate of Convergence O(1/t) for Variational Inequalities with Lipschitz Continuous Monotone Operators and Smooth Convex-Concave Saddle Point Problems},
	volume = {15},
	year = {2004}}

@book{bertsekas2009convex,
	author = {Bertsekas, D.},
	isbn = {9781886529311},
	publisher = {Athena Scientific},
	series = {Athena Scientific optimization and computation series},
	title = {Convex Optimization Theory},
	year = {2009}}

@book{rockafellar1970convex,
	author = {R. Tyrrell Rockafellar},
	isbn = {9780691015866},
	publisher = {Princeton University Press},
	title = {Convex Analysis},
	url = {http://www.jstor.org/stable/j.ctt14bs1ff},
	urldate = {2025-12-25},
	year = {1970}}

@article{laude2025anisotropic,
	author = {Laude, Emanuel and Patrinos, Panagiotis},
	journal = {Mathematical Programming},
	pages = {1--45},
	publisher = {Springer},
	title = {Anisotropic proximal gradient},
	year = {2025}}

@misc{chen2025accmd,
	archiveprefix = {arXiv},
	author = {Long Chen and Hao Luo and Jingrong Wei and Zeyi Xu and Yuan Yao},
	eprint = {2601.19038},
	primaryclass = {math.OC},
	title = {Accelerated Mirror Descent Method through Variable and Operator Splitting},
	url = {https://arxiv.org/abs/2601.19038},
	year = {2026}}

@article{malitsky2024adaptive,
	author = {Malitsky, Yura and Mishchenko, Konstantin},
	journal = {Advances in Neural Information Processing Systems},
	pages = {100670--100697},
	title = {Adaptive proximal gradient method for convex optimization},
	volume = {37},
	year = {2024}}

@article{malitsky2019adaptive,
	author = {Malitsky, Yura and Mishchenko, Konstantin},
	journal = {arXiv preprint arXiv:1910.09529},
	title = {Adaptive gradient descent without descent},
	year = {2019}}

@misc{xu2026adaptiveacceleratedgradientdescent,
	archiveprefix = {arXiv},
	author = {Zeyi Xu and Long Chen},
	eprint = {2601.19013},
	primaryclass = {math.OC},
	title = {Adaptive Accelerated Gradient Descent Methods for Convex Optimization},
	url = {https://arxiv.org/abs/2601.19013},
	year = {2026}}

@article{cavalcanti2024adaptive,
	author = {Cavalcanti, Joao V and Lessard, Laurent and Wilson, Ashia C},
	journal = {arXiv preprint arXiv:2408.13150},
	title = {Adaptive Backtracking Line Search},
	year = {2024}}

@article{maddison2021dual,
  title={Dual space preconditioning for gradient descent},
  author={Maddison, Chris J and Paulin, Daniel and Teh, Yee Whye and Doucet, Arnaud},
  journal={SIAM Journal on Optimization},
  volume={31},
  number={1},
  pages={991--1016},
  year={2021},
  publisher={SIAM}
}

\newpage
\appendix

\section{Discussion: Relation to Anisotropic Smoothness/Convexity} 
\label{sec:anisotropic}

Laude et al. (2025) proposed the anisotropic descent (smoothness) and convexity conditions \cite{laude2025anisotropic}:

\begin{definition}\label{def:anisoconvex}
    The function $f$ is said to have the anisotropic convexity property of $\lambda$ with respect to the preconditioner function $\phi$ if, for any $x,y\in\mathbb{R}^n$, 
    \begin{equation}
    D_f(x,y)\geq D_{\phi_{\lambda}}(x-y^+,y-y^+),
    \end{equation}
    where $y^+ = y-\lambda\nabla \phi^*(\nabla f(y))$, and $\phi_{\lambda}$ is the epi-scaling of $\phi$: $\phi_{\lambda}(x) = \lambda\phi(\frac{1}{\lambda}x)$.
\end{definition}

\begin{definition}\label{def:anisosmooth}
    The function $f$ is said to have the anisotropic descent (smoothness) property of $\lambda$ with respect to the preconditioner function $\phi$ if, for any $x,y\in\mathbb{R}^n$, 
    \begin{equation}
    D_f(x,y)\leq D_{\phi_{\lambda}}(x-y^+,y-y^+),
    \end{equation}
    where $y^+ = y-\lambda\nabla \phi^*(\nabla f(y))$, and $\phi_{\lambda}$ is the epi-scaling of $\phi$: $\phi_{\lambda}(x) = \lambda\phi(\frac{1}{\lambda}x)$.
\end{definition}

We now show that \textbf{(A1$^*$)} is equivalent to having the anisotropic descent property of $L$ (Definition \ref{def:anisosmooth}) and anisotropic convexity property of $\mu$ (Definition \ref{def:anisoconvex}). Our approach to the dual mirror descent can thus be viewed as an alternative approach with greater simplicity and clarity.

\subsection{epi-scaling}
The scaling in the dual function leads to the so-called epi-scaling of $\phi$:
\[
(\phi_{\lambda})(x):=\lambda\,\phi\!\left(\frac{1}{\lambda}x\right),\qquad \lambda>0.
\]
It is called \emph{epi-scaling} because the epigraph of $\phi_\lambda$ is obtained
by scaling the epigraph of $\phi$ by the factor $\lambda$. Indeed, recall that
\[
\operatorname{epi}\phi
:=\{(x,t)\in\mathbb{R}^n\times\mathbb{R}\;:\; t\ge \phi(x)\}.
\]
For $\lambda>0$ and $\phi_\lambda(x):=\lambda\,\phi(x/\lambda)$, we have
\[
\begin{aligned}
\operatorname{epi}\phi_\lambda
&=\{(x,t)\;:\; t\ge \lambda\,\phi(x/\lambda)\} =\{(\lambda y,\lambda s)\;:\; s\ge \phi(y)\} =\lambda\,\operatorname{epi}\phi,
\end{aligned}
\]
where the last equality is understood as a uniform dilation in
$\mathbb{R}^{n+1}$.

\begin{lemma}\label{lem:epi-scaling-conjugate}
For $\lambda>0$, the convex conjugate of the epi-scaled function satisfies
\[
(\phi_{\lambda})^* = \lambda\,\phi^*.
\]
\end{lemma}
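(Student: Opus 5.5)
The plan is to compute the conjugate directly from the definition of the Fenchel conjugate and change variables. By definition,
\[
(\phi_\lambda)^*(\chi)=\sup_{x}\bigl\{\langle \chi,x\rangle-\lambda\,\phi(x/\lambda)\bigr\}.
\]
I will substitute $x=\lambda u$. Since $\lambda>0$, the map $u\mapsto\lambda u$ is a bijection of $V$ onto itself, and it maps $\operatorname{dom}\phi$ onto $\operatorname{dom}\phi_\lambda=\lambda\operatorname{dom}\phi$. The supremum over $x$ therefore equals the supremum over $u$ of $\langle\chi,\lambda u\rangle-\lambda\phi(u)$.

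Next, I will factor out $\lambda$ using linearity of the pairing and positive homogeneity of the supremum for a positive scalar:
\[
\sup_u \lambda\bigl(\langle\chi,u\rangle-\phi(u)\bigr)=\lambda\sup_u\bigl(\langle\chi,u\rangle-\phi(u)\bigr)=\lambda\phi^*(\chi).
\]
This identity also holds when the value is $+\infty$, so $\operatorname{dom}(\phi_\lambda)^*=\operatorname{dom}\phi^*$ and no separate domain bookkeeping is needed.

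There is no real obstacle; the only care point is the sign of $\lambda$, which is needed for the sup to commute with scaling. As a consistency check, which I do not need for the proof, the same computation shows that the gradient maps correspond. For the Legendre $\phi$ we have $\nabla\phi_\lambda(x)=\nabla\phi(x/\lambda)$, so $(\nabla\phi_\lambda)^{-1}(\chi)=\lambda\nabla\phi^*(\chi)=\nabla(\lambda\phi^*)(\chi)$. This agrees with $(\nabla\phi_\lambda)^{-1}=\nabla(\phi_\lambda)^*$. It is also the form in which the lemma will be used: the step $y^+=y-\lambda\nabla\phi^*(\nabla f(y))$ in Definitions~\ref{def:anisoconvex}--\ref{def:anisosmooth} becomes a gradient of $(\phi_\lambda)^*$.
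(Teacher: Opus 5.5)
Your proposal is correct and matches the paper's proof: both substitute $x=\lambda u$ in the definition of $(\phi_\lambda)^*$ and pull the positive factor $\lambda$ out of the supremum to obtain $\lambda\phi^*$. Your remarks on the $+\infty$ case and the consistency check $\nabla(\phi_\lambda)^*=\lambda\nabla\phi^*$ are accurate, though the proof does not need them.
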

\begin{proof}
\[
\begin{aligned}
(\phi_{\lambda})^*(y)
&= \sup_{x}\Big\{\langle y,x\rangle - \lambda\,\phi(x/\lambda)\Big\}
 = \lambda \sup_{u}\Big\{\langle y,u\rangle - \phi(u)\Big\}
= \lambda\,\phi^*(y).
\end{aligned}
\]
\end{proof}

Certain scaling properties hold under epi-scaling:
\begin{itemize}
\item \noindent\textbf{Bregman divergence.}
The Bregman divergence associated with $\phi_{\lambda}$ satisfies
\[
D_{\phi_{\lambda}}(x,y)
=
\lambda\,D_\phi\!\left(\frac{x}{\lambda},\,\frac{y}{\lambda}\right),
\qquad \forall\,x,y.
\]

\item \noindent\textbf{Gradients and Hessians.}
If $\phi\in C^1$, then
\[
\nabla(\phi_{\lambda})(x)
=
\nabla\phi\!\left(\frac{x}{\lambda}\right).
\]
If $\phi\in C^2$, then
\[
\nabla^2(\phi_{\lambda})(x)
=
\frac{1}{\lambda}\,\nabla^2\phi\!\left(\frac{x}{\lambda}\right).
\]
\end{itemize}

Thus epi-scaling rescales the curvature of $\phi$, not merely its values. In
particular, it modifies smoothness and strong convexity constants while
preserving the overall geometric structure encoded by the Bregman divergence.

\subsection{Equivalence of anisotropic smoothness with dual relative smoothness}


If $\phi^*$ is $L$-relatively smooth with respect to $f^*$, define the epi-scaled
function $\psi := \phi_{1/L}$, that is, $\psi(x)=\frac{1}{L}\,\phi(Lx).$ By the conjugacy property of epi-scaling, we have
\[
D_{\psi^*}(\chi,\eta)
\le D_{f^*}(\chi,\eta),
\]
which shows that $\psi^*$ is $1$-relatively smooth with respect to $f^*$.
Therefore, by replacing $\phi$ with its epi-scaled version $\phi_{1/L}$, we may
assume without loss of generality that the relative smoothness constant satisfies
$L=1$.

\begin{proposition}
    The function $f$ has the anisotropic descent property of $\frac{1}{L}$ with respect to the preconditioner function $\phi$ if and only if $\phi^*$ is $L$-relatively smooth with respect to $f^*$, i.e., $D_{\phi^*}(\chi, \eta)\leq L D_{f^*}(\chi, \eta)$ for all $\chi, \eta\in V^*$.
\end{proposition}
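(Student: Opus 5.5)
The plan is to rewrite both sides of the anisotropic descent inequality as a pointwise inequality between a function and a shifted, tilted copy of $\phi_\lambda$, and then pass to Fenchel conjugates. Here $\lambda=1/L$. Since conjugation is order-reversing and, by biconjugation, the reverse implication also holds for closed convex functions, the two conditions become equivalent.

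\textbf{Step 1 (simplify the right-hand side).} Fix $y$ and set $\eta=\nabla f(y)$. Then $y-y^+=\lambda\nabla\phi^*(\eta)$. By the gradient scaling property of epi-scaling,
$$\nabla\phi_\lambda(y-y^+)=\nabla\phi\bigl(\nabla\phi^*(\eta)\bigr)=\eta.$$
Write the Bregman divergence in Fenchel--Young form, $D_{\phi_\lambda}(a,b)=\phi_\lambda(a)+\phi_\lambda^*(\nabla\phi_\lambda(b))-\langle\nabla\phi_\lambda(b),a\rangle$, and use Lemma~\ref{lem:epi-scaling-conjugate}, $\phi_\lambda^*=\lambda\phi^*$. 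This gives
$$D_{\phi_\lambda}(x-y^+,y-y^+)=\phi_\lambda(x-y^+)+\lambda\phi^*(\eta)-\langle\eta,x-y^+\rangle.$$
Adding $f(y)+\langle\eta,x-y\rangle$ to both sides of the descent inequality and simplifying with $\langle \eta,y^+-y\rangle=-\lambda\langle\eta,\nabla\phi^*(\eta)\rangle$, the anisotropic descent property at base point $y$ reads, for all $x$,
$$f(x)\le \phi_\lambda(x-y^+)+c(y),\qquad c(y)=f(y)+\lambda\phi^*(\eta)-\lambda\langle\eta,\nabla\phi^*(\eta)\rangle .$$

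\textbf{Step 2 (conjugate).} Take the conjugate of both sides in $x$. The conjugate of $x\mapsto\phi_\lambda(x-y^+)+c$ is $\chi\mapsto\lambda\phi^*(\chi)+\langle\chi,y^+\rangle-c$. Substitute $y=\nabla f^*(\eta)$, $f(y)=\langle\eta,y\rangle-f^*(\eta)$ and $y^+=y-\lambda\nabla\phi^*(\eta)$, and collect terms. The inequality $f^*(\chi)\ge\lambda\phi^*(\chi)+\langle\chi,y^+\rangle-c(y)$ then becomes exactly
$$D_{f^*}(\chi,\eta)\ge\lambda D_{\phi^*}(\chi,\eta)\qquad\forall\chi,$$
that is, $D_{\phi^*}(\chi,\eta)\le L\,D_{f^*}(\chi,\eta)$.

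\textbf{Step 3 (both directions and the range of $\eta$).} For the forward direction, order reversal of conjugation applies directly. For the converse, the pointwise inequality $f^*\ge g^*$, with $g:=\phi_\lambda(\cdot-y^+)+c(y)$, yields $f=f^{**}\le g^{**}=g$, since $f$ and $\phi$ are closed proper convex. As $y$ ranges over the domain, $\eta=\nabla f(y)$ ranges over all of $\operatorname{int}\operatorname{dom} f^*$ by the Legendre assumption on $f$. Hence "for all $x,y$" corresponds exactly to "for all $\chi,\eta$", with boundary points of the domain handled by lower semicontinuity.

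\textbf{Main obstacle.} The main obstacle is the bookkeeping in Step 2. One must verify that the linear terms $\langle\chi,y^+\rangle$, $\lambda\langle\eta,\nabla\phi^*(\eta)\rangle$ and $\langle\eta,y\rangle$ recombine exactly into $\langle\chi-\eta,\nabla f^*(\eta)\rangle-\lambda\langle\chi-\eta,\nabla\phi^*(\eta)\rangle$, with no residual terms. A secondary point is making the domain and Legendre caveats precise.
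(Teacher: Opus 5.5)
Your proof is correct, and it takes a different route from the paper's. You rewrite anisotropic descent at a fixed base point $y$ as a global majorization $f\le g_y:=f(y)+\phi_\lambda(\cdot-y^+)-\phi_\lambda(y-y^+)$; your $c(y)$ equals $f(y)-\phi_\lambda(y-y^+)$ by Fenchel--Young. The identity $f^*(\chi)-g_y^*(\chi)=D_{f^*}(\chi,\eta)-\lambda D_{\phi^*}(\chi,\eta)$ checks out. It settles both directions at once: order reversal of conjugation gives one implication, and Fenchel--Moreau ($f=f^{**}$, $g_y=g_y^{**}$) gives the other.

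The paper instead normalizes to $L=1$ by epi-scaling and treats each direction separately by explicit model minimization. For (ii)$\Rightarrow$(i) it builds the upper model of $D_f(\cdot,y)$ at base point $x$, computes its minimizer $z^\star$, and compares heights via $\min h\le\min h_\phi$. For (i)$\Rightarrow$(ii) it repeats this in the dual space, using a lower model of $D_{f^*}(\cdot,\chi)$ built from relative smoothness. Minimizing $D_f(\cdot,y)$ or its model amounts to evaluating a conjugate at the single point $\eta$. So the paper's height comparisons are pointwise instances of your conjugate inequality, and its dual-space half is a hand-made biconjugation.

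Your version is shorter, keeps $\lambda$ general so no rescaling is needed, and shows plainly that the equivalence is conjugate duality of a majorant/minorant pair. The cost is reliance on Fenchel--Moreau, so closedness and convexity of $f$ must be invoked explicitly. The paper's version is more elementary and yields explicit minimizers and a geometric picture, at the price of two separate computations.

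One small remark: your lower-semicontinuity caveat is needed only if relative smoothness is assumed on $\operatorname{int}\operatorname{dom} f^*$ alone. If it is read for all $\chi\in V^*$ with the value $+\infty$ allowed, then $f^*\ge g_y^*$ holds everywhere directly.
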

\begin{proof}
Throughout, let $x,y\in\mathbb{R}^n$ be arbitrary and set
\[
\chi:=\nabla f(x),\qquad \eta:=\nabla f(y),
\qquad x=\nabla f^*(\chi),\qquad y=\nabla f^*(\eta).
\]
Assume $L=1$ (as explained via epi-scaling). We prove the equivalence between

\smallskip
\noindent\emph{(i)} $\phi^*$ is $1$-relatively smooth with respect to $f^*$, i.e.
\begin{equation}\label{eq:rel-smooth-1}
D_{\phi^*}(\eta, \chi)\le D_{f^*}(\eta, \chi),\qquad \forall\,\chi,\eta\in V^*,
\end{equation}
and

\smallskip
\noindent\emph{(ii)} $f$ has the anisotropic descent property of $1$ with respect to $\phi$, i.e.
\begin{equation}\label{eq:aniso-descent-1}
D_f(x,y)\le D_{\phi}(x-y^+,\,y-y^+),
\end{equation}
By definition,
\[
y-y^+ = \nabla \phi^*(\eta), \quad \eta = \nabla f(y), \quad \eta = \nabla \phi(y-y^+).
\]

{\bf Part: $\eqref{eq:aniso-descent-1}\ \Rightarrow\ \eqref{eq:rel-smooth-1}$.}

Fix $x,y\in\mathbb{R}^n$ and set
\[
h(z):=D_f(z,y) = f(z) - f(y) - \dual{\nabla f(y), z - y},\qquad z\in\mathbb{R}^n.
\]
Then $h$ is convex, $h(z)\ge 0$ for all $z$, and $y = \arg\min h(z)$.
Moreover, $h$ differs from $f$ by an affine term, hence they share the same Bregman divergence.

We use the anisotropic descent property to construct an upper approximation $h_{\phi}(z)$ of $h(z)$ at $z = x$. We use $h_{\phi}$ to define a gradient descent step at $x$. Then calculate the minimizer of $h_{\phi}$ and compare the difference to get \eqref{eq:rel-smooth-1}.

\medskip

\paragraph{\em Step 1: Construct an upper approximation of $h$.}
By the Bregman decomposition with base point $x$,
\begin{equation}\label{eq:h-bregman}
h(z)
=
h(x)+\langle \nabla h(x),z-x\rangle + D_h(z,x),
=
h(x)+\langle \nabla h(x),z-x\rangle + D_f(z,x),
\end{equation}
where
\[
\nabla h(x)=\nabla f(x)-\nabla f(y)=\chi - \eta.
\]

By assumption \eqref{eq:aniso-descent-1}, we have the upper bound
\[
D_f(z,x)\ \le\ D_\phi(z-x^+,x-x^+),
\]
where 
$x^+:=x-\nabla\phi^*(\chi), \nabla\phi(x-x^+)=\chi.$
Plugging this into \eqref{eq:h-bregman}, we obtain the upper model
\[
h(z)\ \le\ h_\phi(z)
:= h(x)+\langle \nabla h(x),z-x\rangle + D_\phi(z-x^+,x-x^+).
\]

\begin{figure}[htbp]
\begin{center}
\includegraphics[width=3.6in]{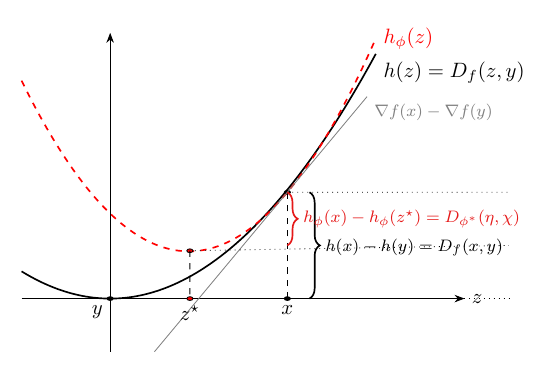}
\caption{Upper approximation of $D_f$ at $x$.}
\label{fig:upper}
\end{center}
\end{figure}

\paragraph{\em Step 2: Compute the minimizer $z^\star$ of $h_\phi$.}
We have
\[
\nabla h_\phi(z)
=
(\chi-\eta)+\nabla\phi(z-x^+)-\nabla\phi(x-x^+)
= \nabla\phi(z-x^+) - \eta.
\]
Setting $\nabla h_\phi(z^\star)=0$ gives
\[
\nabla\phi(z^\star-x^+)=\eta,
\qquad\text{so}\qquad
z^\star-x^+ = \nabla\phi^*(\eta).
\]
Using $x^+=x-\nabla\phi^*(\chi)$, we conclude
\begin{equation}\label{eq:zstar}
z^\star
- x=
\nabla\phi^*(\eta) -\nabla\phi^*(\chi).
\end{equation}

\paragraph{\em Step 3: Evaluate $h_\phi(x)-h_\phi(z^\star)$.}
First, $h_\phi(x)=h(x)$.
For $z=z^\star$, using \eqref{eq:zstar},
\[
z^\star-x
=
-\nabla\phi^*(\chi)+\nabla\phi^*(\eta),
\qquad
z^\star-x^+
=\nabla\phi^*(\eta),
\qquad
x-x^+=\nabla\phi^*(\chi),
\]
we obtain
\begin{align*}
h_\phi(z^\star)
&=
h(x)+\langle \chi-\eta,z^\star-x\rangle
+D_\phi(z^\star-x^+,x-x^+)\\
&=
h(x)+\langle \chi-\eta,\nabla\phi^*(\eta)-\nabla\phi^*(\chi)\rangle
+D_\phi\bigl(\nabla\phi^*(\eta),\nabla\phi^*(\chi)\bigr).
\end{align*}
Hence
\begin{equation}\label{eq:delta-def}
\Delta
:=
h_\phi(x)-h_\phi(z^\star)
=
-\langle \chi-\eta,\nabla\phi^*(\eta)-\nabla\phi^*(\chi)\rangle
- D_\phi\bigl(\nabla\phi^*(\eta),\nabla\phi^*(\chi)\bigr).
\end{equation}

By conjugacy of Bregman divergences,
\[
D_\phi\bigl(\nabla\phi^*(\eta),\nabla\phi^*(\chi)\bigr)
=
D_{\phi^*}(\chi,\eta).
\]
Moreover, the symmetrization identity for $\phi^*$ gives
\[
D_{\phi^*}(\eta,\chi)+D_{\phi^*}(\chi,\eta)
=
\langle \nabla\phi^*(\eta)-\nabla\phi^*(\chi),\,\eta-\chi\rangle.
\]
Therefore,
\begin{align*}
\Delta
&=
-\langle \chi-\eta,\nabla\phi^*(\eta)-\nabla\phi^*(\chi)\rangle
- D_\phi\bigl(\nabla\phi^*(\eta),\nabla\phi^*(\chi)\bigr)\\
&=
-\langle \chi-\eta,\nabla\phi^*(\eta)-\nabla\phi^*(\chi)\rangle
- D_{\phi^*}(\chi,\eta)\\
&=
D_{\phi^*}(\eta,\chi)+D_{\phi^*}(\chi,\eta)-D_{\phi^*}(\chi,\eta)
= D_{\phi^*}(\eta,\chi).
\end{align*}

\medskip
\paragraph{\em Step 4: Relate $\Delta$ to $D_f(x,y)$.}
Since $h\ge h_\phi$ pointwise and $h(y)=\min_z h(z)$, we have
\[
h(x)-h(y)\ \ge\ h(x)-\min_z h_\phi(z)\ \ge\ h(x)-h_\phi(z^\star)=\Delta.
\]
Based on Fig. \ref{fig:upper}, we obtain
\[
D_{f^*}(\eta,\chi) = D_f(x,y)
= h(x)-h(y)
\ge h(x)-h_\phi(z^\star)
= D_{\phi^*}(\eta,\chi),
\]
which is exactly \eqref{eq:rel-smooth-1}.

\medskip
{\bf Part: $\eqref{eq:rel-smooth-1}\ \Rightarrow\ \eqref{eq:aniso-descent-1}$.}

The argument is analogous to Part~1, but carried out in the dual space for the dual objective $f^*$.

Fix $x\in\mathbb{R}^n$ and let $\chi=\nabla f(x)$. Define
\[
h(v):=D_{f^*}(v,\chi)
= f^*(v)-f^*(\chi)-\langle \nabla f^*(\chi),v-\chi\rangle,
\qquad v\in\mathbb{R}^n.
\]
Then $h(v)\ge 0$ for all $v$, with $\chi=\arg\min_v h(v)$. Since $h$ differs from $f^*$ only by an affine function, they share the same Bregman divergence:
\[
D_h(u,v)=D_{f^*}(u,v), \qquad \forall\,u,v.
\]
Moreover,
\[
\nabla h(\eta)=\nabla f^*(\eta)-\nabla f^*(\chi)=y-x.
\]

\paragraph{\em Step 1: Construct a lower approximation of $h$.}
By the Bregman decomposition of $h$ at $\eta$, we have
\[
h(v)
= h(\eta)+\langle \nabla h(\eta),v-\eta\rangle + D_{f^*}(v,\eta).
\]
Using the relative smoothness assumption \eqref{eq:rel-smooth-1},
$D_{f^*}(v,\eta)\ge D_{\phi^*}(v,\eta)$, we obtain the lower model
\begin{equation}\label{eq:key-lower-bound}
h(v)\ \ge\ h_{\phi^*}(v)
:= h(\eta)+\langle \nabla h(\eta),v-\eta\rangle + D_{\phi^*}(v,\eta).
\end{equation}

\begin{figure}[htbp]
\begin{center}
\includegraphics[width=3.6in]{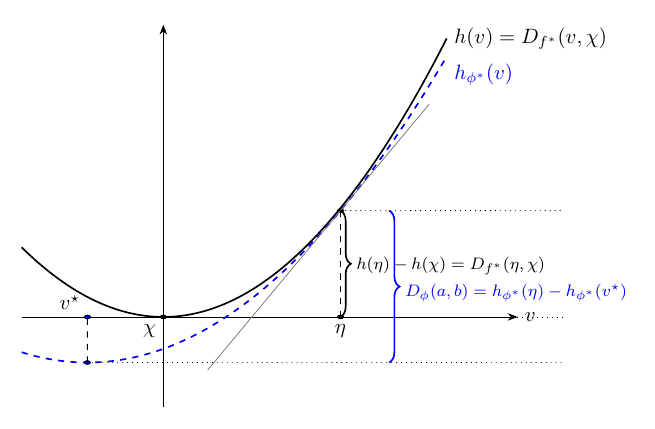}
\caption{Lower approximation of $D_{f^*}$ at $\eta$.}
\label{fig:Dflower}
\end{center}
\end{figure}

\paragraph{\em Step 2: Compute the minimizer $v^\star$ of $h_{\phi^*}$.}
The gradient of the nonconstant part of $h_{\phi^*}$ is
\[
\nabla_v\Big(\langle \nabla h(\eta),v-\eta\rangle + D_{\phi^*}(v,\eta)\Big)
= \nabla h(\eta)+\nabla\phi^*(v)-\nabla\phi^*(\eta).
\]
Setting this to zero yields
\begin{equation}\label{eq:vstar}
\nabla\phi^*(v^\star)
= \nabla\phi^*(\eta)-\nabla h(\eta)
= \nabla\phi^*(\eta)-(y-x)=x-y^+,
\end{equation}
that is,
\[
v^\star=\nabla\phi(x-y^+).
\]

\paragraph{\em Step 3: Evaluate the height of the lower model.}
Since $h_{\phi^*}(\eta)=h(\eta)$, we compute
\[
h_{\phi^*}(\eta)-h_{\phi^*}(v^\star)
= -\langle \nabla h(\eta),v^\star-\eta\rangle - D_{\phi^*}(v^\star,\eta).
\]
Let
\[
a:=x-y^+,\qquad b:=y-y^+.
\]
Then $y-x=b-a$, $v^\star=\nabla\phi(a)$, and $\eta=\nabla\phi(b)$. Hence
\[
-\langle \nabla h(\eta),v^\star-\eta\rangle
= \langle b-a,\,\nabla\phi(b)-\nabla\phi(a)\rangle.
\]
Using conjugacy,
$D_{\phi^*}(\nabla\phi(a),\nabla\phi(b))=D_\phi(b,a)$, and the symmetrized
Bregman identity, we obtain
\begin{equation}\label{eq:height-phi}
h_{\phi^*}(\eta)-h_{\phi^*}(v^\star)
= D_\phi(a,b)
= D_\phi(x-y^+,\,y-y^+).
\end{equation}

\paragraph{\em Step 4: Compare with the height of $h$.}
Since $h\ge h_{\phi^*}$ pointwise and $h(\chi)=\min_v h(v)$,
\[
h(\eta)-h(\chi)
\le h_{\phi^*}(\eta)-\min_v h_{\phi^*}(v)
= h_{\phi^*}(\eta)-h_{\phi^*}(v^\star).
\]
Therefore,
\[
D_f(x,y)
= D_{f^*}(\eta,\chi)
\le D_\phi(x-y^+,\,y-y^+),
\]
which is exactly the anisotropic descent property \eqref{eq:aniso-descent-1}.

\end{proof}

In the same spirit, we can also prove the following equivalence.
\begin{proposition}
    The function $f$ has the anisotropic convexity property of $\mu$ with respect to $\phi$ if and only if the dual function $\phi^*$ is $\mu$-relatively convex to $f^*$, i.e., $D_{\phi^*}(\chi, \eta)\geq \mu D_{f^*}(\chi, \eta)$ for all $\chi, \eta\in V^*$.
\end{proposition}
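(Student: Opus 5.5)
The plan is to mirror the proof of the preceding proposition (anisotropic descent $\Leftrightarrow$ dual relative smoothness) with every model inequality reversed. The upper models become lower models and vice versa. Comparing minima then works in the opposite direction, so the same closed-form ``heights'' $D_{\phi^*}(\eta,\chi)$ and $D_\phi(x-y^+,y-y^+)$ reappear, now as lower rather than upper bounds.

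\textbf{Step 0 (normalization).} I will first use epi-scaling (Lemma~\ref{lem:epi-scaling-conjugate}) to reduce to unit constant. Set $\psi=\phi_\mu$, so that $\psi^*=\mu\phi^*$, $\nabla\psi^*=\mu\nabla\phi^*$, and $y^+=y-\nabla\psi^*(\nabla f(y))$. Then anisotropic convexity of $\mu$ with respect to $\phi$ is exactly anisotropic convexity of $1$ with respect to $\psi$. Likewise, relative convexity of $\phi^*$ with respect to $f^*$ becomes the unit inequality $D_{\psi^*}\ge D_{f^*}$, after matching the constant convention used in the previous proposition (where the descent parameter $1/L$ corresponded to the relative constant $L$). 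Fixing this bookkeeping of the constant is the one delicate point, and I would do it exactly as in the smoothness case. After this step I drop $\psi$ and write $\phi$. As before, set $\chi=\nabla f(x)$, $\eta=\nabla f(y)$, $a=x-y^+$, and $b=y-y^+$, so that $\eta=\nabla\phi(b)$.

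\textbf{Relative convexity $\Rightarrow$ anisotropic convexity.} Work in the dual with $h(v)=D_{f^*}(v,\chi)$, which is minimized at $\chi$ with $h(\chi)=0$. Expand $h$ at $\eta$. The assumption $D_{f^*}(v,\eta)\le D_{\phi^*}(v,\eta)$ now gives an \emph{upper} model $h\le h_{\phi^*}$, where $h_{\phi^*}$ is the same function as in Step~1 of the earlier dual argument. Its minimizer is again $v^\star=\nabla\phi(a)$, and its height is $h_{\phi^*}(\eta)-h_{\phi^*}(v^\star)=D_\phi(a,b)$ by~\eqref{eq:height-phi}. Then
\[
h(\chi)=\min_v h(v)\le h(v^\star)\le h_{\phi^*}(v^\star).
\]
This yields
\[
D_f(x,y)=D_{f^*}(\eta,\chi)=h(\eta)-h(\chi)\ge h_{\phi^*}(\eta)-h_{\phi^*}(v^\star)=D_\phi(x-y^+,y-y^+),
\]
which is anisotropic convexity.

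\textbf{Anisotropic convexity $\Rightarrow$ relative convexity.} Work in the primal with $h(z)=D_f(z,y)$, minimized at $y$ with $h(y)=0$. Apply anisotropic convexity at base point $x$: $D_f(z,x)\ge D_\phi(z-x^+,x-x^+)$. This gives a \emph{lower} model $h\ge h_\phi$, where $h_\phi$ is the function from Step~1 of the earlier primal argument. Its minimizer $z^\star$ is given by~\eqref{eq:zstar}, and the computation in Step~3 gives $h_\phi(x)-h_\phi(z^\star)=D_{\phi^*}(\eta,\chi)$. Now
\[
h(y)=\min h\ge\min h_\phi=h_\phi(z^\star),
\]
and $h(x)=h_\phi(x)$, so
\[
D_{f^*}(\eta,\chi)=D_f(x,y)=h(x)-h(y)\le D_{\phi^*}(\eta,\chi).
\]
This holds for all $\chi,\eta$ in the range of $\nabla f$, which is the relative convexity inequality.

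The algebraic identities are reused verbatim from the previous proof. The main obstacle is not analytic. It is keeping the directions of the min-comparisons and the normalization of the constant $\mu$ consistent through the epi-scaling.
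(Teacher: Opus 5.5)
Your two unit-constant directions are correct, and they are what the paper's one-line ``in the same spirit'' intends. When $\psi^*-f^*$ is convex, the dual model $h_{\phi^*}$ becomes an upper model and yields $D_f(x,y)\ge D_\psi(x-y^+,y-y^+)$. When anisotropic convexity holds, the primal model $h_\phi$ becomes a lower model and yields $D_{f^*}(\eta,\chi)\le D_{\psi^*}(\eta,\chi)$.

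The gap is Step~0. With $\psi=\phi_\mu$ you have $D_{\psi^*}=\mu D_{\phi^*}$. So the unit inequality $D_{\psi^*}\ge D_{f^*}$ is $D_{\phi^*}\ge \mu^{-1}D_{f^*}$, not $D_{\phi^*}\ge \mu D_{f^*}$. Doing the bookkeeping ``exactly as in the smoothness case'' (parameter $1/L$ $\leftrightarrow$ constant $L$) gives parameter $\mu$ $\leftrightarrow$ constant $1/\mu$. Chaining your steps therefore proves only that anisotropic convexity of $\mu$ holds iff $D_{\phi^*}\ge\mu^{-1}D_{f^*}$. You flagged the constant as the delicate point but asserted it works out without checking it.

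This cannot be patched, because the statement as written is false for $\mu\neq 1$. Take $\phi=\tfrac12\|x\|^2$ and $f=\tfrac c2\|x\|^2$. Then $D_{\phi_\lambda}(x-y^+,y-y^+)=\tfrac{1}{2\lambda}\|x-y\|^2$, so anisotropic convexity of $\lambda$ means $c\ge 1/\lambda$. On the other side, $D_{\phi^*}(\chi,\eta)=\tfrac12\|\chi-\eta\|^2$ and $D_{f^*}(\chi,\eta)=\tfrac1{2c}\|\chi-\eta\|^2$, so $D_{\phi^*}\ge\mu D_{f^*}$ means $c\ge\mu$. With $\lambda=\mu=2$ and $c=1$, the first condition holds and the second fails.

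The correct statement, consistent with the smoothness proposition's ``$1/L$'' and with Laude--Patrinos, is: $f$ has anisotropic convexity of $1/\mu$ iff $D_{\phi^*}\ge\mu D_{f^*}$. To prove it, take $\psi=\phi_{1/\mu}$. Then $\psi^*=\mu^{-1}\phi^*$, so $D_{\psi^*}\ge D_{f^*}$ is exactly $D_{\phi^*}\ge \mu D_{f^*}$, and your two directions then go through verbatim.
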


\section{Convergence Analysis for Accelerated Primal-Dual Mirror Descent}
\label{sec:acc_primal_dual}
\paragraph{Convergence Guarantees}
Accelerated linear convergence is achieved by ensuring the right-hand side of Lemma \ref{lem:descentidentity} remains non-positive through proper parameter selection. The following Young-type condition is sufficient for guaranteeing accelerated linear convergence:

\begin{assumption}[Mirror acceleration compatibility]\label{ass:compatibility}
For the iterates generated by~\eqref{eq:scheme}, the mirror map $\phi$ and the
stepsize $\alpha$ satisfy
$$
\alpha\langle \nabla f(x_{k+1}), y_k-y_{k+1}\rangle
\le
\frac1L D_{\phi^*}(\nabla f(x_{k+1}),0)
+
\mu D_\phi(y_{k+1},y_k).
$$
\end{assumption}

\begin{theorem}
  \label{thm:descentinequality}
  Assume $f-\mu\phi$ and $Lf^*-\phi^*$ are convex.  Let $\{z_k\}$ be generated
by~\eqref{eq:scheme} with $\alpha\beta=1/L$.  If Assumption~\ref{ass:compatibility}
holds, then the energy decays at a linear rate:
\begin{equation}\label{eq:descentinequality}
E(z_{k+1}) \le \frac{1}{1+\alpha} E(z_k).
\end{equation}
\end{theorem}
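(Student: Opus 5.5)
The plan is to start from the descent inequality of Lemma~\ref{lem:descent}, specialized to constant parameters $\alpha_k=\alpha$ and $1/L_k=\alpha\beta=1/L$. By that lemma,
$$
E(z_{k+1})\le \frac{1}{1+\alpha}E(z_k)+b_k,\qquad b_k=b_k^{(1)}+b_k^{(2)}+b_k^{(3)} .
$$
Strictly, the lemma gives $(1+\alpha)E(z_{k+1})-E(z_k)\le$ (the three residuals). Dividing by $1+\alpha$ only rescales the perturbation by a positive factor, so its sign is unchanged. It therefore suffices to show $b_k^{(1)}\le0$, $b_k^{(2)}\le 0$ and $b_k^{(3)}\le0$ separately. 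Convexity of $f-\mu\phi$ has already been used in the lemma to drop the term $-\alpha D_{f-\mu\phi}(x^\star,x_{k+1})$.

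\textbf{The term $b_k^{(3)}$.} It is $-\frac1L D_{\phi^*}(0,\nabla f(x_k))-\alpha\mu D_\phi(y_{k+1},x_{k+1})$. This is nonpositive because $\phi,\phi^*$ are convex and $\alpha,\mu,L>0$.

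\textbf{The term $b_k^{(2)}$.} It is exactly the difference of the two sides of Assumption~\ref{ass:compatibility}, with $1/L_k=1/L$. Hence $b_k^{(2)}\le0$ by hypothesis. This is why the compatibility condition is imposed: it is the only cross term that structural convexity cannot control.

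\textbf{The term $b_k^{(1)}$.} This is the main step, although it should be short. We need
$$
\tfrac1L D_{\phi^*}(\nabla f(x_{k+1}),\nabla f(x_k))\le D_f(x_k,x_{k+1}).
$$
Convexity of $Lf^*-\phi^*$ means $D_{\phi^*}(\chi,\eta)\le L D_{f^*}(\chi,\eta)$ for all $\chi,\eta$. I would take $\chi=\nabla f(x_{k+1})$ and $\eta=\nabla f(x_k)$. Then the conjugate identity~\eqref{eq:DfDf*}, applied to $f$ (assumed Legendre in the dual formulation), gives
$$
D_{f^*}(\nabla f(x_{k+1}),\nabla f(x_k))=D_f(x_k,x_{k+1}),
$$
and $b_k^{(1)}\le0$ follows. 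This is just the upper half of (A1$^*$) with the argument order checked.

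The only real care needed is keeping the argument orders of the Bregman divergences consistent. The order matters both in the conjugate identity and in the three-point identity used inside Lemma~\ref{lem:descent}. It also requires the Legendre-type condition on $f$, so that $\nabla f^*=(\nabla f)^{-1}$ holds along the iterates.

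Summing the three bounds gives $b_k\le0$, and hence~\eqref{eq:descentinequality}.
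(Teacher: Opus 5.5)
Your proof is correct and is essentially the paper's argument. The paper starts from the descent identity of Lemma~\ref{lem:descentidentity}, drops the $D_{f-\mu\phi}$ term, expands $\langle \nabla f(x_{k+1}),\nabla\phi^*(\nabla f(x_k))\rangle$ by the three-point identity, bounds $D_{\phi^*}(\nabla f(x_{k+1}),\nabla f(x_k))\le L D_f(x_k,x_{k+1})$ by dual relative smoothness, and absorbs the remaining cross term with Assumption~\ref{ass:compatibility}; this is exactly your $b_k^{(1)},b_k^{(2)},b_k^{(3)}\le 0$ bookkeeping via Lemma~\ref{lem:descent}, including your correct observation that the perturbation there should carry a factor $1/(1+\alpha)$, which is harmless since only its sign matters.
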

\begin{proof}
As $f-\mu\phi$ and $\phi$ are convex, the last two terms in \eqref{eq:descent} are non-positive and can be dropped.

We use the three-point identity for $\phi^*$ to expand the cross term $\dual{\nabla f(x_{k+1}),\nabla \phi^*(\nabla f(x_{k}))}$.

Substituting $\alpha\beta=1/L$. Since $Lf^*-\phi^*$ is convex, dual relative smoothness implies
\[
D_{\phi^*}(\nabla f(x_{k+1}),\nabla f(x_k))
\le LD_f(x_k,x_{k+1}).
\]
The cross term $\alpha\dual{\nabla f(x_{k+1}),y_k-y_{k+1}}$ is bounded by \eqref{ass:compatibility},
%
which cancels the remaining positive term in \eqref{eq:descent}. Therefore,
\[
(1+\alpha)E(z_{k+1}) \le E(z_k),
\]
which yields \eqref{eq:descentinequality}.
\end{proof}

\label{sec:dis_examples}

Next, we discuss several types of mirror maps where the assumptions needed for linear accelerated convergence holds. In all cases, \eqref{ass:compatibility} is satisfied with the step-size $\alpha$ being lower bounded by $\sqrt{\mu/L}$ up to some constant, thus guaranteeing accelerated linear convergence:
$$
E(z_{k+1}) \leq \left(1 + c\sqrt{\frac{\mu}{L}}\right)^{-1} E(z_k).
$$
 These examples cover a wide variety of mirror maps which are widely used in real practice.

\paragraph{Linear preconditioners} When $\phi(x)=\frac{1}{2}\|x\|_{B^{-1}}^2$ for some positive definite matrix $B$, the scheme \eqref{eq:scheme} reduces to the preconditioned accelerated gradient descent method:

\begin{equation}\label{eq:scheme-linearprecond}\begin{aligned}
    x_{k+1}-x_k&{}=\alpha(y_k-x_{k+1})-\frac{1}{L} B(\nabla f(x_k)),\\
    y_{k+1}-y_k&{}=\alpha(x_{k+1}-y_{k+1})-\frac{\alpha}{\mu}\nabla f(x_{k+1})).
\end{aligned}
\end{equation}
In this case, we have the following accelerated linear convergence rate.
\begin{theorem} \label{thm:linearprecond}
Under the assumptions of Theorem \ref{thm:descentinequality}, choose $\alpha=\sqrt{\mu/L}$. Then \eqref{ass:compatibility} holds, and the sequence $\{z_k\}$ generated by \eqref{eq:scheme-linearprecond} satisfies
$$
E(z_{k+1}) \leq \left(1 + \sqrt{\frac{\mu}{L}}\right)^{-1} E(z_k).
$$
\end{theorem}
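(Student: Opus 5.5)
Since Theorem~\ref{thm:descentinequality} already gives the energy decay once Assumption~\ref{ass:compatibility} holds, the whole task reduces to verifying the compatibility inequality for $\phi(x)=\frac12\|x\|_{B^{-1}}^2$ with $\alpha=\sqrt{\mu/L}$. In this setting everything is quadratic: $\nabla\phi(x)=B^{-1}x$, $\phi^*(\chi)=\frac12\|\chi\|_B^2$, $\nabla\phi^*(\chi)=B\chi$. Hence $D_{\phi^*}(g,0)=\frac12\|g\|_B^2$ and $D_\phi(y_{k+1},y_k)=\frac12\|y_{k+1}-y_k\|_{B^{-1}}^2$, where we write $g=\nabla f(x_{k+1})$. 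The scheme~\eqref{eq:scheme} then literally becomes~\eqref{eq:scheme-linearprecond}, with $\alpha\beta=1/L$ so that $\beta\nabla\phi^*(\nabla f(x_k))=\frac1{\alpha L}B\nabla f(x_k)$.

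The key step is a weighted Cauchy--Schwarz/Young inequality in the dual pair of norms $\|\cdot\|_B$ and $\|\cdot\|_{B^{-1}}$:
\[
\alpha\langle g,y_k-y_{k+1}\rangle\le \alpha\|g\|_B\|y_k-y_{k+1}\|_{B^{-1}}
\le \frac{\alpha^2}{2\mu}\|g\|_B^2+\frac{\mu}{2}\|y_{k+1}-y_k\|_{B^{-1}}^2 .
\]
With $\alpha^2=\mu/L$, the first term equals $\frac{1}{2L}\|g\|_B^2=\frac1L D_{\phi^*}(g,0)$. The second term equals $\mu D_\phi(y_{k+1},y_k)$. This is exactly Assumption~\ref{ass:compatibility}, and it holds for every iterate with no dependence on the trajectory.

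We then invoke Theorem~\ref{thm:descentinequality}; its hypotheses, convexity of $f-\mu\phi$ and of $Lf^*-\phi^*$, are assumed. This yields $E(z_{k+1})\le (1+\alpha)^{-1}E(z_k)=(1+\sqrt{\mu/L})^{-1}E(z_k)$.

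The only point needing care is the Cauchy--Schwarz step. We must confirm that $\|\cdot\|_B$ on $V^*$ is indeed the dual norm of $\|\cdot\|_{B^{-1}}$ on $V$, which follows from $\langle g,v\rangle=(B^{1/2}g,B^{-1/2}v)$. We must also check that the Legendre and normalization conventions ($\nabla\phi(0)=\nabla\phi^*(0)=0$) are satisfied, which is immediate for this quadratic $\phi$. There is no genuine obstacle: the Young inequality is tight exactly at the accelerated choice $\alpha=\sqrt{\mu/L}$.
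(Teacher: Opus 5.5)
Your proof is correct and follows essentially the same argument as the paper. A Cauchy--Schwarz/Young bound in the dual pair $\|\cdot\|_B$, $\|\cdot\|_{B^{-1}}$ gives $\alpha\langle \nabla f(x_{k+1}),y_k-y_{k+1}\rangle\le \frac{\alpha^2}{\mu}D_{\phi^*}(\nabla f(x_{k+1}),0)+\mu D_\phi(y_{k+1},y_k)$; at $\alpha=\sqrt{\mu/L}$ this is exactly Assumption~\ref{ass:compatibility}, and Theorem~\ref{thm:descentinequality} finishes the proof. One minor point: multiplying \eqref{eq:scheme-y} by $B$ gives $-\frac{\alpha}{\mu}B\nabla f(x_{k+1})$ in the $y$-update, so the displayed \eqref{eq:scheme-linearprecond} is not literally what \eqref{eq:scheme} reduces to, since it drops that factor $B$; this is a typo in the paper's display and does not affect your argument.
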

\begin{proof}
    By Cauchy-Schwarz inequality and Young's inequality, we have
    $$\begin{aligned}
        \alpha\dual{\nabla f(x_{k+1}),y_{k}-y_{k+1}}{}&\leq \frac{1}{2}\left(\frac{\alpha^2}{\mu} \|\nabla f(x_{k+1})\|_{B}^2+\mu\|y_k-y_{k+1}\|_{B^{-1}}^2\right)\\
        {}&= \frac{\alpha^2}{\mu}D_{\phi^*}(\nabla f(x_{k+1}),0)+\mu D_\phi(y_{k+1},y_k),
    \end{aligned}$$
    The conclusion then follows directly from Theorem \ref{thm:descentinequality} and the fact that $\alpha=\sqrt{\mu/L}$.
\end{proof}

\paragraph{Function $\phi$ is smooth and strongly convex} In this case, the ``curvature" of $\phi$ changes slowly, and we can take it into consideration and choose $\alpha$ accordingly to guarantee the right-hand side of \eqref{eq:descentinequality} is negative.
\begin{proposition}
    Assume $\phi$ is $\mu_\phi$-strongly convex and $L_\phi$-smooth. Choose $\alpha=\sqrt{\rho_\phi}\sqrt{\frac{\mu}{L}}$, where $\rho_\phi:=\mu_\phi/L_\phi$.Then \eqref{ass:compatibility} holds, and the sequence $\{z_k\}$ generated by \eqref{eq:scheme} satisfies
$$
E(z_{k+1}) \leq \left(1 + \sqrt{\rho_\phi}\sqrt{\frac{\mu}{L}}\right)^{-1} E(z_k).
$$
\end{proposition}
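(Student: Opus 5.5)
The plan is to reduce the proposition to Theorem~\ref{thm:descentinequality}. Its hypotheses ($f-\mu\phi$ and $Lf^*-\phi^*$ convex, $\alpha\beta=1/L$) are inherited. So the only thing to check is Assumption~\ref{ass:compatibility} with $\alpha=\sqrt{\rho_\phi}\sqrt{\mu/L}$. Once that holds, the stated rate is exactly \eqref{eq:descentinequality} with this $\alpha$. The argument mirrors the proof of Theorem~\ref{thm:linearprecond}. The difference is that the exact quadratic identities $D_{\phi^*}(g,0)=\tfrac12\|g\|_B^2$ and $D_\phi(y',y)=\tfrac12\|y'-y\|_{B^{-1}}^2$ are replaced by two-sided quadratic bounds coming from $\mu_\phi$ and $L_\phi$.

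\textbf{Step 1 (quadratic bounds).} Strong convexity of $\phi$ gives
$$D_\phi(y_{k+1},y_k)\ge \tfrac{\mu_\phi}{2}\|y_{k+1}-y_k\|^2.$$
Since $\phi$ is $L_\phi$-smooth, $\phi^*$ is $1/L_\phi$-strongly convex. Using $\nabla\phi^*(0)=0$, this yields
$$D_{\phi^*}(g,0)\ge \tfrac{1}{2L_\phi}\|g\|^2,\qquad g=\nabla f(x_{k+1}).$$

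\textbf{Step 2 (Cauchy--Schwarz and Young).} For any $\tau>0$,
$$\alpha\langle g,y_k-y_{k+1}\rangle\le \tfrac{\alpha^2}{2\tau}\|g\|^2+\tfrac{\tau}{2}\|y_k-y_{k+1}\|^2 .$$
Take $\tau=\mu\mu_\phi$, so the second term is at most $\mu D_\phi(y_{k+1},y_k)$ by Step~1. The first term is then $\tfrac{\alpha^2}{2\mu\mu_\phi}\|g\|^2$. This is at most $\tfrac{\alpha^2 L_\phi}{\mu\mu_\phi}D_{\phi^*}(g,0)$.

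\textbf{Step 3 (choice of $\alpha$).} Require $\tfrac{\alpha^2L_\phi}{\mu\mu_\phi}\le \tfrac1L$, that is, $\alpha^2\le \rho_\phi\,\mu/L$. Equality holds for the proposed $\alpha$, so Assumption~\ref{ass:compatibility} is verified. Theorem~\ref{thm:descentinequality} then gives
$$E(z_{k+1})\le(1+\sqrt{\rho_\phi}\sqrt{\mu/L})^{-1}E(z_k).$$

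The only delicate point is Step~1's dual bound. It needs the conjugate duality between smoothness of $\phi$ and strong convexity of $\phi^*$, and it needs $\nabla\phi^*(0)=0$ so that the divergence is taken at the origin. Both are standard, and the normalization is assumed in the preliminaries. Balancing the Young weight so that the two factors $\mu_\phi$ and $1/L_\phi$ combine into $\rho_\phi$ is the main bookkeeping to get right.
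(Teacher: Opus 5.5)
Your proposal is correct and follows the paper's own argument. It takes the quadratic lower bounds $D_\phi(y_{k+1},y_k)\ge\frac{\mu_\phi}{2}\|y_{k+1}-y_k\|^2$ and $D_{\phi^*}(g,0)\ge\frac{1}{2L_\phi}\|g\|^2$, applies Cauchy--Schwarz and Young with weights balanced so that $\alpha^2=\rho_\phi\mu/L$ gives Assumption~\ref{ass:compatibility}, and then invokes Theorem~\ref{thm:descentinequality}. Your constants are cleaner than the paper's displayed Young step, which carries stray $\kappa_\phi$ and $L_k$ factors. Note also that the dual bound follows from the $1/L_\phi$-strong convexity of $\phi^*$ alone, so $\nabla\phi^*(0)=0$ is needed only in the three-point expansion inside Theorem~\ref{thm:descentinequality}, not in your Step~1.
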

\begin{proof}
By the smoothness and strong convexity of $\phi$, we have
$$D_{\phi}(x,y)\geq \frac{\mu_\phi}{2} \|x-y\|^2,\quad D_{\phi^*}(\chi,\eta)=D_\phi(\nabla \phi^*(\eta),\nabla \phi^*(\chi))\geq \frac{1}{2L_\phi} \|\chi-\eta\|^2.$$
By Cauchy-Schwarz inequality and Young's inequality, we have
$$\begin{aligned}
    \sqrt{\frac{\mu}{\kappa_\phi L_k}}\dual{\nabla f(x_{k+1}),y_{k}-y_{k+1}}{}&\leq \sqrt{\frac{\mu_\phi \mu }{L_\phi L_k}} \|\nabla f(x_{k+1})\|_*\|y_k-y_{k+1}\|\\
    {}&\leq \frac{1}{2L_\phi L_k}\|\nabla f(x_{k+1})\|_*^2+\frac{\mu_\phi \mu}{2\kappa_\phi}\|y_k-y_{k+1}\|^2\\
    {}&\leq \frac{1}{L_k} D_{\phi^*}(\nabla f(x_{k+1}),0)+\mu D_\phi(y_{k+1},y_k).
\end{aligned}$$
\end{proof}



\paragraph{Function $\phi$ is twice H\"older continuously differentiable}
When $\phi$ is not smooth, we may not be able to obtain an explicit lower bound on $\alpha$ to guarantee \eqref{ass:compatibility}. However, we still have a local accelerated linear rate.
\begin{proposition}[Local convergence]
    Assume $\phi\in C^{2,\theta}$ for some $\theta\in(0,1]$. Let $L_\phi(x)$ and $\mu_\phi(x)$ be the maximum and minimum eigenvalue of $\nabla^2\phi$ at $x$, respectively. Then for any $\hat{\kappa}>L_{\phi}(x^\star)/\mu_{\phi}(\nabla \phi^*(0
    ))$, there exists a neighborhood $\mathcal{U}$ of $x^*$ such that if $x_{k+1},y_k,y_{k+1}\in \mathcal{U}$, then there exists some $\alpha>0$ such that the inequality \eqref{ass:compatibility} holds with $\alpha=\sqrt{\frac{\mu}{\hat{\kappa} L_k}}$, and the sequence $\{z_k\}$ generated by \eqref{eq:scheme} satisfies
$$
E(z_{k+1}) \leq \left(1 + \sqrt{\frac{\mu}{\hat{\kappa} L}}\right)^{-1} E(z_k).
$$
\end{proposition}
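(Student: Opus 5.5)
Our plan is to reduce the claim to Theorem~\ref{thm:descentinequality}: it suffices to verify Assumption~\ref{ass:compatibility} with $\alpha=\sqrt{\mu/(\hat\kappa L_k)}$ whenever $x_{k+1},y_k,y_{k+1}\in\mathcal U$. The linear rate then follows verbatim from that theorem, with $L_k\le L$. As in the smooth, strongly convex proposition, we first apply Cauchy--Schwarz and Young to the cross term:
\[
\alpha\dual{\nabla f(x_{k+1}),y_k-y_{k+1}}\le \frac{\alpha^2}{2\mu\,m}\|\nabla f(x_{k+1})\|^2+\frac{\mu\,m}{2}\|y_k-y_{k+1}\|^2 ,
\]
where $m$ is a local lower curvature constant for $\phi$, to be chosen below. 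It then remains to bound the two quadratic terms by the Bregman divergences on the right-hand side of the assumption, using only local curvature information.

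For the $y$-term we use the integral form $D_\phi(y_{k+1},y_k)=\int_0^1(1-t)\langle\nabla^2\phi(y_k+t(y_{k+1}-y_k))(y_{k+1}-y_k),\,y_{k+1}-y_k\rangle\,dt$. When $y_k,y_{k+1}$ lie in a small convex neighborhood of $x^\star$, H\"older continuity of $\nabla^2\phi$ gives $\lambda_{\min}(\nabla^2\phi)\ge \mu_\phi(x^\star)-C r^\theta$ on a ball of radius $r$. Hence $D_\phi(y_{k+1},y_k)\ge \tfrac{m}{2}\|y_{k+1}-y_k\|^2$ with $m$ arbitrarily close to $\mu_\phi(x^\star)$.

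For the gradient term we write $D_{\phi^*}(g,0)=D_\phi(0,\nabla\phi^*(g))$ by \eqref{eq:DfDf*}, using $\nabla\phi^*(0)=0$. We then expand $\phi$ along the segment from $\nabla\phi^*(g)$ to $0$, with upper curvature $M$ near $\nabla\phi^*(0)=0$. Since $\nabla^2\phi^*=(\nabla^2\phi)^{-1}$ and $g=\nabla f(x_{k+1})$ is small, this gives $D_{\phi^*}(g,0)\ge \frac{1}{2M}\|g\|^2$. Here $g$ is small because $x_{k+1}$ is near $x^\star$ and $\nabla f(x^\star)=0$. With $\alpha^2=\mu/(\hat\kappa L_k)$, the required inequality becomes $\frac{1}{\hat\kappa\, m}\le \frac{1}{M}$, that is, $\hat\kappa\ge M/m$. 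Since $\hat\kappa$ exceeds the limiting ratio of the curvature constants at the relevant base points (as in the statement), we can choose the radius of $\mathcal U$ small enough that $M/m\le\hat\kappa$ by continuity of the eigenvalues.

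The main obstacle is the bookkeeping of which base points the two curvature constants refer to. The $y$-term sees $\nabla^2\phi$ near $x^\star$. The gradient term sees $\nabla^2\phi$ near $\nabla\phi^*(g)\approx\nabla\phi^*(0)=0$, which is generally not near $x^\star$. We therefore need $\mathcal U$ to control $\nabla f(x_{k+1})$ as well, through continuity of $\nabla f$ and $\nabla\phi^*$. The stated threshold $L_\phi(x^\star)/\mu_\phi(\nabla\phi^*(0))$ has to be matched to the constants $M$ and $m$. If the orientation of $M$ and $m$ in my sketch turns out to be swapped relative to the statement, we would redo the Young split with the weights interchanged, so that the $y$-term uses $L_\phi$ and the gradient term uses $\mu_\phi$. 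The H\"older exponent $\theta$ is used only to make the neighborhood size quantitative; mere continuity of $\nabla^2\phi$ would suffice for existence.
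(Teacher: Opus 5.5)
\textbf{Gap: the final threshold-matching step fails.} Your route is the paper's route: second-order expansions of $D_\phi(y_{k+1},y_k)$ with Hessians near $x^\star$, and of $D_{\phi^*}(\nabla f(x_{k+1}),0)$ with Hessians of $\phi$ near $\nabla\phi^*(0)$; continuity of $\nabla^2\phi$; smallness of $\nabla f(x_{k+1})$; Young's inequality as in the smooth strongly convex case; then Theorem~\ref{thm:descentinequality}. Your bookkeeping is right, and it gives the requirement $\hat\kappa\ge M/m$ with $M\to L_\phi(\nabla\phi^*(0))$ and $m\to\mu_\phi(x^\star)$. The gap is the last step, where you say this follows from the hypothesis $\hat\kappa>L_\phi(x^\star)/\mu_\phi(\nabla\phi^*(0))$. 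It does not, because the base points are interchanged. Your fallback of redoing the Young split with interchanged weights cannot repair it. Both Bregman terms sit on the right-hand side of Assumption~\ref{ass:compatibility}, so each must be bounded from \emph{below}. That forces $\lambda_{\min}(\nabla^2\phi)$ near $x^\star$ for the $y$-term and $\lambda_{\min}(\nabla^2\phi^*)=1/\lambda_{\max}(\nabla^2\phi)$ near $\nabla\phi^*(0)$ for the gradient term. There is no way to make the $y$-term ``use $L_\phi$''. The Young weight $c$ also drops out: you need $c\le m$ and $\alpha^2\le\mu c/(LM)$, hence $\hat\kappa\ge M/m$ for every choice of $c$.

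\textbf{The printed threshold is unreachable.} The compatibility claim is false under it. Take $n=1$ and $\phi$ even and smooth, with $\phi''\equiv4$ on $[-1,1]$, $\phi''\equiv1$ on $|x|\ge2$, and $1\le\phi''\le4$ in between. Let $f(x)=2(x-5)^2$, so $x^\star=5$, and take $\mu=1$, $L=L_k=4$. Then $f-\phi$ and $4f^*-\phi^*$ are convex, and the printed threshold is $\phi''(5)/\phi''(0)=1/4$. With $\hat\kappa=1$, i.e.\ $\alpha=1/2$, the scheme started at $x_0=5+2\epsilon$, $y_0=5$ gives $x_1=5+\epsilon$ and $y_1=5-\epsilon$. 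All three points lie within $\epsilon$ of $x^\star$. Yet $\alpha\dual{\nabla f(x_1),y_0-y_1}=2\epsilon^2$, whereas $\tfrac14D_{\phi^*}(4\epsilon,0)+D_\phi(y_1,y_0)=\epsilon^2$.

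So what your sketch actually proves, and what is true, is the proposition with $\hat\kappa>L_\phi(\nabla\phi^*(0))/\mu_\phi(x^\star)$. This is the local analogue of the paper's smooth strongly convex proposition, which uses $D_\phi\ge\frac{\mu_\phi}{2}\|\cdot\|^2$ and $D_{\phi^*}\ge\frac{1}{2L_\phi}\|\cdot\|^2$. The paper's own proof does not settle this. It stops after making the two Hessians $\varepsilon$-close to their limits and asserts the printed threshold without doing the Young step. You should state and prove the corrected threshold rather than try to match the printed one.

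A smaller point: to invoke Theorem~\ref{thm:descentinequality}, take $L_k=L$. If $L_k<L$, the bound $D_{\phi^*}(\nabla f(x_{k+1}),\nabla f(x_k))\le L_kD_f(x_k,x_{k+1})$ used there does not follow from the hypotheses.
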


\begin{proof}
    By the H\"older continuity of $\nabla^2 \phi$, $$\|\nabla^2\phi(x)-\nabla^2\phi(y)\|\leq L\|x-y\|^\theta.$$
    Thus for any $\chi=\nabla\phi(x),\eta=\nabla \phi(y)\in \mathrm{domain}(\phi^*)$, we have
    $$\begin{aligned}
        \|\nabla^2 \phi^*(\chi)-\nabla^2 \phi^*(\eta)\|{}&=\|(\nabla^2 \phi(x))^{-1}-(\nabla^2 \phi^*(\eta))^{-1}\|\\
        {}&=\|(\nabla^2 \phi(x))^{-1}(\nabla^2 \phi(\eta)-\nabla^2 \phi(x))(\nabla^2 \phi(\eta))^{-1}\|\\
        {}&\leq \|(\nabla^2 \phi(x))^{-1}\| \|\nabla^2 \phi(\eta)-\nabla^2 \phi(x)\| \|(\nabla^2 \phi(\eta))^{-1}\|\\
        {}&\leq \frac{L}{\mu_\phi^2}\|x-y\|^\theta\\
        {}&\leq \frac{L}{\mu_\phi^{2+\theta}}\|\chi-\eta\|^\theta.
        \end{aligned}$$
    Thus $\nabla^2 \phi^*$ is also H\"older continuous. By the remainder form of Taylor's theorem, we have
    $$\begin{aligned}
        D_{\phi^*}(\nabla f(x_{k+1}),0){}=\frac{1}{2}\|\nabla f(x_{k+1})\|_{\nabla^2 \phi^*(\xi)}^2,~~ D_{\phi}(y_{k+1},y_k){}=\frac{1}{2}\|\nabla f(y_{k+1}-y_k)\|_{\nabla^2 \phi(w)}^2,
    \end{aligned}$$
    for some $\xi$ in the dual space between $\nabla f(x_{k+1})$ and $0$, and some $w$ in the primal space between $y_{k+1}$ and $y_k$. From H\"older continuity of $\nabla^2 \phi$ and $\nabla^2 \phi^*$, we have for any $\varepsilon>0$, there exists a neighborhood $\mathcal{U}_1$ of $x^*$ such that if $y_k,y_{k+1}\in \mathcal{U}$, then $$\|\nabla^2 \phi(w)-\nabla^2 \phi(x^\star)\|\leq \varepsilon.$$ From the smoothness of $f$, there exists a neighborhood $\mathcal{U}_2$ of $x^*$ such that if $x_{k+1}\in \mathcal{U}_2$, then $$\|\nabla f(x_{k+1})-0\|_*\leq \delta,$$ for some $\delta>0$ small enough such that the corresponding $\xi$ satisfies $$\|\nabla^2 \phi^*(\xi)-\nabla^2 \phi^*(0)\|\leq \varepsilon.$$
    Let $\mathcal{U}=\mathcal{U}_1\cap \mathcal{U}_2$. Then by the arbitrary choice of $\varepsilon>0$, we have the conclusion holds for all $\hat{\kappa}>L_{\phi}(x^\star)/\mu_{\phi}(\nabla \phi^*(0))$.
\end{proof}
\section{Proof of Theorem \ref{thm:homotopy}}
\label{sec:homotopy_proof}

We use the idea from \citet{chen2025accelerated}. Denote the Lyapunov function with perturbation $\epsilon$ as
\begin{equation}\label{eq:perturbed Lyapunov}
\mathcal{E}(x, y, \epsilon) = D_f(x, x^{\star}) + \epsilon D_\phi(x^\star,y).\end{equation}
We first prove the following theorem.

\begin{theorem}
		\label{thm:convergence rate of fixed epsilon}
		Suppose $f$ is convex and $L$-dual relatively smooth to $\phi$. Furthermore, assume $\phi$ satisfies the condition \eqref{ass:compatibility} with $\alpha=\sqrt{\epsilon/L}/c_{\phi}$, where $c_{\phi}\geq 1$ is a $\phi$-dependent constant specified in Appendix \ref{sec:dis_examples}, and $\alpha_k\leq 2\sqrt{\epsilon/L}$ for all $k$. Let $(x_k, y_k)$ be the sequence generated by Algorithm \ref{alg:AAMD-0} with the initial value $(x_0, y_0)$. Assume that there exists $R >0$ such that
		\begin{equation}\label{eq:boundednorm}
			D_\phi(x^{\star}, x_k) \leq \frac{R^2}{2}, \quad \forall k \geq 0.
		\end{equation}
		Then the following decay property holds:
		\begin{equation}\label{eq: perturbed AOR-VOS Ealpha decay}
			\begin{aligned}
				\mathcal{E}(x_{k+1}, y_{k+1},\epsilon) \leq  \left(1 + \frac{1}{c_{\phi}} \sqrt{\epsilon/L}\right)^{-(k+1)}\mathcal{E}(x_{0}, y_{0},\epsilon) + 
				\epsilon R^2, \quad k \geq 0.
			\end{aligned}
		\end{equation}
	\end{theorem}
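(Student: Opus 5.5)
The plan is to view the $\epsilon$-problem as a strongly convex perturbation and reuse the one-step descent machinery of Lemma~\ref{lem:descentidentity} and Theorem~\ref{thm:descentinequality}. The natural object is the perturbed objective $f_\epsilon(x):=f(x)+\epsilon D_\phi(x,x_0)$ or, more simply, to run the scheme with $\mu$ replaced by $\epsilon$ while keeping the original $f$. Since $f$ is only convex, $f-\epsilon\phi$ need not be convex. The term $-\alpha D_{f-\epsilon\phi}(x^\star,x_{k+1})$ in \eqref{eq:descent} can therefore no longer be dropped. This is where the additive error $\epsilon R^2$ will come from.

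\textbf{Step 1.} Rederive the descent identity \eqref{eq:descent} with $\mu=\epsilon$ and the Lyapunov function $\mathcal E(x,y,\epsilon)$ from \eqref{eq:perturbed Lyapunov}; the derivation of Lemma~\ref{lem:descentidentity} is purely algebraic and does not use convexity. Then split
$$-\alpha D_{f-\epsilon\phi}(x^\star,x_{k+1})=-\alpha D_f(x^\star,x_{k+1})+\alpha\epsilon D_\phi(x^\star,x_{k+1}).$$
The first piece is nonpositive because $f$ is convex. The second is at most $\alpha\epsilon R^2/2$ by \eqref{eq:boundednorm}.

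\textbf{Step 2.} Handle the remaining terms exactly as in Theorem~\ref{thm:descentinequality}, setting $\alpha\beta=1/L$:
\begin{itemize}
\item Expand the $\phi^*$ cross term by the three-point identity and absorb $D_{\phi^*}(\nabla f(x_{k+1}),\nabla f(x_k))$ into $D_f(x_k,x_{k+1})$ using the convexity of $Lf^*-\phi^*$.
\item Bound the term $\alpha\langle\nabla f(x_{k+1}),y_k-y_{k+1}\rangle$ by Assumption~\ref{ass:compatibility} (with $\mu=\epsilon$), valid for $\alpha=\sqrt{\epsilon/L}/c_\phi$.
\end{itemize}
For adaptive $\alpha_k$ the line search keeps this inequality valid, and $\alpha_k\ge\sqrt{\epsilon/L}/c_\phi$ follows from the examples in Appendix~\ref{sec:dis_examples}; I will take this lower bound as part of the hypotheses. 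The result is
$$(1+\alpha_k)\mathcal E_{k+1}\le \mathcal E_k+\tfrac12\alpha_k\epsilon R^2.$$

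\textbf{Step 3.} Unroll the recursion. Write it as $\mathcal E_{k+1}\le q_k\mathcal E_k+(1-q_k)\tfrac12\epsilon R^2$ with $q_k=1/(1+\alpha_k)$, using $\alpha_k/(1+\alpha_k)=1-q_k$. The forcing terms then telescope:
$$\mathcal E_{k+1}-\tfrac12\epsilon R^2\le q_k\bigl(\mathcal E_k-\tfrac12\epsilon R^2\bigr).$$
This gives
$$\mathcal E_{k+1}\le\prod_i q_i\,\mathcal E_0+\tfrac12\epsilon R^2,$$
which is within the claimed $\epsilon R^2$. The lower bound $\alpha_i\ge\sqrt{\epsilon/L}/c_\phi$ then yields the contraction factor $(1+\sqrt{\epsilon/L}/c_\phi)^{-(k+1)}$.

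The upper bound $\alpha_k\le2\sqrt{\epsilon/L}$ leaves room for a cruder estimate if the exact telescoping fails in the adaptive-budget version, where only $p_k\le0$ is enforced. In that case I would track the budget: define $p_k$ as in Theorem~\ref{thm:accumulative} and add the error terms separately, bounding their geometric sum by $\sum_j\alpha_j\epsilon R^2/2\cdot\prod_{i>j}q_i\le\epsilon R^2$.

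\textbf{Main obstacle.} The delicate point is making the adaptive algorithm mesh with the fixed-$\alpha$ compatibility assumption, i.e. justifying that the accepted $\alpha_k$ stays between $\sqrt{\epsilon/L}/c_\phi$ and $2\sqrt{\epsilon/L}$ while $p_k\le0$ still covers the compatibility residual $b_k^{(2)}$. I expect to resolve it by treating both bounds as assumptions, as the statement does, and verifying only that the error term enters linearly in $\alpha_k$.
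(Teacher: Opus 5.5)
Your proposal is correct. It follows the paper's skeleton: the descent identity with $\mu=\epsilon$, the split $-\alpha_k D_{f-\epsilon\phi}(x^\star,x_{k+1})=-\alpha_k D_f(x^\star,x_{k+1})+\alpha_k\epsilon D_\phi(x^\star,x_{k+1})$, the bound by $R^2/2$, then unrolling. It differs from the paper in two places.

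\textbf{Handling of $b_k$.} The paper never makes $b_k\le 0$ per step. It carries $\frac{1}{1+\alpha_k}b_k$ along and absorbs it into the budget $p_k\le 0$, as in Theorem~\ref{thm:accumulative}. Your main Step~2 is not what Algorithm~\ref{alg:AAMD-0} guarantees: the line search enforces only $p_k\le 0$, and it uses $L_k$ rather than $L$. So your fallback is the operative argument, and your exact telescoping goes through unchanged there. With $c=\epsilon R^2/2$ and $q_k=1/(1+\alpha_k)$, the recursion $\mathcal E_{k+1}\le q_k\mathcal E_k+(1-q_k)c+q_kb_k$ gives $\mathcal E_{k+1}-c-p_k\le q_k(\mathcal E_k-c-p_{k-1})$. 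Hence $\mathcal E_{k+1}\le\prod_i q_i\,\mathcal E_0+c+p_k$, and no cruder estimate is needed. Your crude sum also drops a factor $q_j$; the correct weights are $\frac{\alpha_j}{1+\alpha_j}\prod_{i>j}q_i$.

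\textbf{Error accumulation.} The paper replaces $\frac{\alpha_k}{1+\alpha_k}$ by $\frac{\alpha^*}{1+\alpha^*}$, with $\alpha^*=2\sqrt{\epsilon/L}$, and sums $\sum_i(1+\alpha^*)^{-i}$. But the propagation factors $\prod_{j>i}(1+\alpha_j)^{-1}$ are dominated by $(1+\alpha^*)^{-(k-i)}$ only if $\alpha_j\ge\alpha^*$, which is the opposite of the hypothesis. Using the lower bound $\sqrt{\epsilon/L}/c_\phi$ instead would only give about $c_\phi\epsilon R^2$. Your identity $\frac{\alpha_k}{1+\alpha_k}=1-q_k$ makes the sum telescope to $c(1-\prod_i q_i)\le\epsilon R^2/2$ for arbitrary positive $\alpha_k$. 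That is what actually delivers the stated constant, and it shows the hypothesis $\alpha_k\le 2\sqrt{\epsilon/L}$ is not needed for this theorem.

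Both your argument and the paper's still rely on the lower bound $\alpha_k\ge\sqrt{\epsilon/L}/c_\phi$ for the contraction factor, and neither derives it from the algorithm. You are right to treat it as an additional hypothesis.
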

	\begin{proof}
	Apply the descent identity \eqref{eq:descent} with $\mu=\epsilon$, and use the notations of $b_k$, $i=1,2,3$, we have
    $$
    (1+\alpha_k)\mathcal E(x_{k+1}, y_{k+1},\epsilon) - \mathcal E(x_k, y_k,\epsilon) \leq \alpha_k\epsilon D_\phi(x^\star,x_{k+1}) + b_k.
    $$ With the assumption $D_\phi(x^{\star}, x_k) \leq \frac{R^2}{2}$, the following inequality holds:
		\begin{equation}
			\begin{aligned}
				\mathcal E(x_{k+1}, y_{k+1},\epsilon) &\leqslant \frac{1}{1+\alpha_k}\mathcal E(x_{k}, y_{k},\epsilon)
				+ \frac{\alpha_k \epsilon}{1+\alpha_k}\frac{R^2}{2} + \frac{1}{1+\alpha_k}b_k.
			\end{aligned}
		\end{equation}
    By the assumption, choosing $\alpha = \alpha^* = 2\sqrt{\epsilon/L}$ gives an upper bound to $\frac{\alpha_k \epsilon}{1+\alpha_k}\frac{R^2}{2}$ on the right hand side. Taking a telescoping sum, we get
        \begin{equation}
            \begin{aligned}
                \mathcal E(x_{k+1}, y_{k+1},\epsilon) &\leqslant \prod_{i=0}^{k}\left(\frac{1}{1+\alpha_i}\right)\mathcal E(x_{0}, y_{0},\epsilon) + 
                \frac{\alpha^* \epsilon }{1+\alpha^*}\frac{R^2}{2}\sum_{i=0}^{k} \frac{1}{(1+\alpha^*)^i} + p_k
            \end{aligned}
        \end{equation}
        where $p_k$ is yielded via the same argument as in Theorem \ref{thm:accumulative}. Use the fact that $\alpha_k \geq \frac{1}{c_{\phi}} \sqrt{\epsilon/L}$, thus
        \[
        \prod_{i=0}^{k}\left(\frac{1}{1+\alpha_i}\right) \leq \left(1 + \frac{1}{c_{\phi}} \sqrt{\epsilon/L}\right)^{-(k+1)}.
        \]
        The conclusion follows by noting that $p_k \leq 0$.
		\end{proof}

\begin{theorem}
    Follow the assumptions in Theorem \ref{thm:convergence rate of fixed epsilon}. Choose $(x_0, y_0)$ and $\epsilon_0$ satisfying $\mathcal E(x_0, y_0, \epsilon_0) \leq (R^2+ 1)\epsilon_{0}$.
    Then for $(x_{k}, y_{k}, \epsilon_{k})$ generated by Algorithm \ref{alg:AAMD-0}, we have
    \begin{equation}\label{eq:Ek-ek}
        \mathcal E(x_{k}, y_{k}, \epsilon_{k})\leq (R^2+1)\epsilon_{k}\quad\forall\,k\geq0,
    \end{equation}
\end{theorem}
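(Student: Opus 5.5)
We argue by induction over the homotopy stages of Algorithm~\ref{alg:AAMD-0}. Here $\epsilon_k$ denotes the perturbation parameter active at iteration $k$, so $\epsilon_k=\varepsilon_s$ for $k_s\le k<k_{s+1}$. The base case $k=0$ is exactly the hypothesis $\mathcal E(x_0,y_0,\epsilon_0)\le (R^2+1)\epsilon_0$.

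\emph{Within a stage.} Fix a stage $s$ and suppose $\mathcal E(x_{k_s},y_{k_s},\varepsilon_s)\le (R^2+1)\varepsilon_s$. For the inner iterates we apply Theorem~\ref{thm:convergence rate of fixed epsilon} restarted at $(x_{k_s},y_{k_s})$ with $\epsilon=\varepsilon_s$. This gives, for $k_s\le k< k_{s+1}$,
\[
\mathcal E(x_{k},y_{k},\varepsilon_s)\le \theta^{\,k-k_s}\,\mathcal E(x_{k_s},y_{k_s},\varepsilon_s)+\varepsilon_s R^2,
\qquad \theta=\Bigl(1+\tfrac{1}{c_\phi}\sqrt{\varepsilon_s/L}\Bigr)^{-1}.
\]
This crude form is not quite enough, because it would give $(2R^2+1)\varepsilon_s$. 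We therefore use the sharper one-step recursion from the proof of that theorem:
\[
\mathcal E_{k+1}\le \frac{1}{1+\alpha_k}\mathcal E_k+\frac{\alpha_k}{1+\alpha_k}\,\varepsilon_s\frac{R^2}{2}+\frac{b_k}{1+\alpha_k}.
\]
Combined with the accumulated budget $p_k\le0$, this shows the interval $[0,(R^2+1)\varepsilon_s]$ is invariant. Indeed, the right-hand side is a convex combination of $\mathcal E_k$ and $\varepsilon_s R^2/2$ plus a nonpositive budget, so if $\mathcal E_k\le (R^2+1)\varepsilon_s$ then $\mathcal E_{k+1}\le (R^2+1)\varepsilon_s$. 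To account for the budget correctly, we run the induction on the quantity $\mathcal E_k-p_{k-1}$ (or equivalently unroll as in Theorem~\ref{thm:accumulative}), so that intermediate positive $b_k$ are absorbed.

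\emph{Stage transition.} When $\varepsilon_{s+1}=\varepsilon_s/2$, we split the energy as
\[
\mathcal E(x,y,\varepsilon_{s+1})=D_f(x,x^\star)+\tfrac12\varepsilon_s D_\phi(x^\star,y)\le \mathcal E(x,y,\varepsilon_s),
\]
which only yields the bound $(R^2+1)\varepsilon_s=2(R^2+1)\varepsilon_{s+1}$. To recover the factor $2$, we use the halving criterion. Either the decay test $\mathcal E_k\le \mathcal E_{k_s}/2\le (R^2+1)\varepsilon_{s+1}$ triggered the switch, and we are done. Or the cap $k=k_s+m$ was reached. In that case the within-stage contraction gives $\theta^{m}(R^2+1)\varepsilon_s+\varepsilon_sR^2/2$. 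With $m$ growing like $\sqrt2$ per stage and $\theta^m\approx e^{-m\sqrt{\varepsilon_s/L}/c_\phi}$, the product $m\sqrt{\varepsilon_s}$ stays constant in $s$, so for $m_0$ chosen large enough (depending on $c_\phi,L,\varepsilon_0$) we get $\theta^m\le 1/(2(R^2+1))$. The total is then at most $\varepsilon_s(1/2+R^2/2)=(R^2+1)\varepsilon_{s+1}$.

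\textbf{Main obstacle.} The hardest step is this last cap case. The additive term must be exactly $\varepsilon_sR^2/2$ rather than $\varepsilon_sR^2$, so we must use the sharper recursion bound, and $m_0$ must be tuned so that the geometric factor is small uniformly across stages. We would first try to make the relation $m_s\sqrt{\varepsilon_s}\ge m_0\sqrt{\varepsilon_0}$ explicit from $m\gets\lfloor\sqrt2 m\rfloor+1$, and then impose $m_0\ge c_\phi\sqrt{L/\varepsilon_0}\,\ln(2(R^2+1))$ as an implicit requirement.
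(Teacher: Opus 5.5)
Your proof is correct, but it organizes the induction differently from the paper.

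\textbf{The paper's route.} The paper indexes by stages and runs stage $k+1$ with the already-halved parameter $\epsilon_{k+1}$ starting from $(x_k,y_k)$. It applies the monotonicity $\mathcal E(x_k,y_k,\epsilon_{k+1})\le \mathcal E(x_k,y_k,\epsilon_k)$ at the \emph{start} of the stage. It then uses Theorem~\ref{thm:convergence rate of fixed epsilon} exactly as stated, with the crude floor $\epsilon_{k+1}R^2$, to get $\frac{1}{2(R^2+1)}(R^2+1)\epsilon_k+\epsilon_{k+1}R^2=(R^2+1)\epsilon_{k+1}$. This controls only stage endpoints, and it tacitly assumes every stage runs the full $m_{k+1}$ steps.

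\textbf{Your route.} You keep $\varepsilon_s$ throughout stage $s$ and lower the parameter only at the \emph{end}. That is why you are forced to use the sharper floor $\varepsilon_sR^2/2=\varepsilon_{s+1}R^2$. This is legitimate: unrolling the one-step recursion gives the additive term $\bigl(1-\prod_i(1+\alpha_i)^{-1}\bigr)\varepsilon_sR^2/2$. But that bound comes from the proof of Theorem~\ref{thm:convergence rate of fixed epsilon}, not from its statement.

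\textbf{What each buys.} In exchange, your version gives the bound at every iteration, not only at stage endpoints. The budget-shifted quantity $\mathcal E_k-p_{k-1}$ obeys a clean convex-combination recursion, which makes this work. You also cover the early exit through the decay test. The paper's proof addresses neither.

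\textbf{Two small corrections.} First, since $\ln(1+a)<a$, the explicit threshold $m_0\ge c_\phi\sqrt{L/\varepsilon_0}\ln(2(R^2+1))$ is slightly too small. It needs an extra factor $1+\frac{1}{c_\phi}\sqrt{\varepsilon_0/L}$, which the paper supplies through $(1+\alpha)^{-1}\le 1-\alpha/(1+2\sqrt{\epsilon_0/L})$. Second, your shifted induction needs the budget reset to $p=0$ at the start of each stage. A carried-over $p_{k_s-1}<0$, which was computed with the old $\varepsilon_s$, would make $\mathcal E_{k_s}-p_{k_s-1}$ exceed $\mathcal E_{k_s}$. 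The paper makes the same assumption implicitly when it restarts Theorem~\ref{thm:convergence rate of fixed epsilon} at each stage.
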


	\begin{proof}
	We prove \eqref{eq:Ek-ek} by induction. For $k=0$, it holds by choosing $\epsilon_0 = D_f(x_0, x^{\star})$. Now suppose that $\mathcal E(x_k, y_k, \epsilon_{k})\leq (R^2+1)\epsilon_k$ and let us consider the $k+1$-th iteration. Since $0<\alpha\leq 2\sqrt{\epsilon_0/L}$ and $(1+\alpha)^{-1}\leq 1-\alpha/(1+2\sqrt{\epsilon_0/L})$, the number of inner iterations $m_{k+1}= (\sqrt{L_F}+2\sqrt{\epsilon_{0}})\ln (2(R^2+1))\epsilon_{k+1}^{-1/2}$
	is chosen so that
	\[
		\begin{aligned}
			(1+\alpha)^{-m_{k+1}} \leq{}& (1-\alpha/(1+2\sqrt{\epsilon_0/L}))^{m_{k+1}}\\
			\leq{}& \exp\left(-\alpha m_{k+1}/(1+2\sqrt{\epsilon_0/L})\right)= (2(R^2+1))^{-1}.
		\end{aligned}
	\]
	Therefore, by Theorem \eqref{thm:convergence rate of fixed epsilon},
	$$
	\begin{aligned}
		\mathcal E(x_{k+1}, y_{k+1}, \epsilon_{k+1}) &\leq \frac{1}{2(R^2+1)}\mathcal E(x_k, y_k, \epsilon_{k+1}) + \epsilon_{k+1} R^2\\
		&\leq \frac{1}{2(R^2+1)}\mathcal E(x_k, y_k, \epsilon_{k}) + \epsilon_{k+1} R^2\\
		&\leq \frac{1}{2}\epsilon_{k} + \epsilon_{k+1} R^2 = (R^2+1) \epsilon_{k+1}.
	\end{aligned}
	$$
\end{proof}

\begin{proof}[Proof of Theorem \ref{thm:homotopy}]
Since $\epsilon_k = \epsilon_0 2^{-k}$, after the $k$-th outer iteration, the overall iteration steps is
		\[
		\begin{aligned}
			M_k=\sum_{i=0}^{k}m_i ={}&  (\sqrt{L}+2\sqrt{\epsilon_{0}})\ln (2(R^2+1)) \sum_{i=1}^k \epsilon_{i}^{-1/2}\\
			={}& (\sqrt{L}+2\sqrt{\epsilon_{0}})\ln (2(R^2+1))\frac{\sqrt{2}}{\sqrt{2} - 1} \left(\epsilon_k^{-1/2}-\epsilon_0^{-1/2}\right).
		\end{aligned}
		\]
		Calculating $\epsilon_k$ from this and plugging it into \eqref{eq:Ek-ek} proves the theorem. As a result, the iteration complexity bound $O(\sqrt{L/\epsilon})$ follows easily.
\end{proof}

\section{Proof of Proposition~\ref{prop:dual_relative_smoothness}}
\label{section_polynomialnorm_proof}

\begin{proposition}[Dual relative smoothness and convexity of norm-polynomials]
\label{prop:dual_relative_smoothness}
Let $\phi,\psi:\mathbb{R}^d\to\mathbb{R}$ be convex polynomials of the norm:
\[
\phi(x) = \sum_{i=0}^m a_i \|x\|^{p_i}, \qquad
\psi(x) = \sum_{j=0}^n b_j \|x\|^{q_j},
\]
where $2 \leq p_0 < p_1 < \dots < p_m$, $2 \leq q_0 < q_1 < \dots < q_n$, and $a_i,b_j>0$. If $p_m > q_n$,
then for any $R>0$, there exists $L>0$ such that for all $y_1, y_2 \in \mathbb{R}^d$ with $\|y_1\|, \|y_2\| \geq R$,
$$D_{\phi^*}(y_1, y_2) \leq L D_{\psi^*}(y_1, y_2).$$
\end{proposition}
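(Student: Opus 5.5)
The plan is to exploit that both conjugates are radial, which reduces the $d$-dimensional Bregman inequality to two one-dimensional comparisons on the half-line $[R,\infty)$.

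\emph{Reduction to radial profiles.} Write $\phi(x)=g(\|x\|)$ and $\psi(x)=h(\|x\|)$, where $g(s)=\sum_i a_i s^{p_i}$ and $h(s)=\sum_j b_j s^{q_j}$ for $s\ge 0$. Since $p_0,q_0\ge 2$, both $g$ and $h$ are $C^1$ with $g'(0)=h'(0)=0$, and $g',h'$ are strictly increasing bijections of $[0,\infty)$. The conjugates are again radial: $\phi^*(y)=G(\|y\|)$ and $\psi^*(y)=H(\|y\|)$, where $G=g^*$ and $H=h^*$ are the one-dimensional conjugates on $[0,\infty)$. Moreover $G'=(g')^{-1}$ and $H'=(h')^{-1}$, so $G',H'>0$ on $(0,\infty)$.

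\emph{Exact splitting of a radial Bregman divergence.} For a radial $F(y)=\Phi(\|y\|)$ with $r_i=\|y_i\|$, we have $\nabla F(y_2)=\Phi'(r_2)\,y_2/r_2$. A direct computation then gives
\[
D_F(y_1,y_2)=D_\Phi(r_1,r_2)+\frac{\Phi'(r_2)}{r_2}\bigl(r_1r_2-\langle y_1,y_2\rangle\bigr).
\]
Both terms are nonnegative: the first by convexity of $\Phi$, the second by Cauchy--Schwarz together with $\Phi'\ge0$. Crucially, only the radii $r_1,r_2\ge R$ enter, and the interval $[R,\infty)$ is convex. This sidesteps the fact that the region $\{\|y\|\ge R\}$ is not convex, so segments between $y_1$ and $y_2$ may pass near the origin, where the behaviour is governed by $p_0,q_0$ and is uncontrolled. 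It therefore suffices to show that there is $L$ with
\[
G''(r)\le L\,H''(r)\quad\text{and}\quad \frac{G'(r)}{r}\le L\,\frac{H'(r)}{r}\qquad\text{for all } r\ge R.
\]
The first inequality gives $D_G(r_1,r_2)\le L D_H(r_1,r_2)$ by the integral form of the remainder on $[R,\infty)$. The second controls the tangential term directly.

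\emph{Asymptotics and compactness.} By the inverse function rule, $G''(r)=1/g''(G'(r))$ and $H''(r)=1/h''(H'(r))$, and all of these are positive and continuous on $[R,\infty)$. Since $g'(s)\sim p_m a_m s^{p_m-1}$ as $s\to\infty$, we get $G'(r)\sim c\, r^{1/(p_m-1)}$, and likewise $H'(r)\sim c' r^{1/(q_n-1)}$. Hence
\[
\frac{G'(r)}{H'(r)}\sim C\,r^{\frac1{p_m-1}-\frac1{q_n-1}}\to0,
\]
because $p_m>q_n$. Similarly $G''(r)\sim C r^{\frac1{p_m-1}-1}$ and $H''(r)\sim C' r^{\frac1{q_n-1}-1}$, so $G''/H''\to0$ as well. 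Each ratio is therefore a continuous positive function on $[R,\infty)$ that tends to $0$. It is bounded on $[R,\infty)$, being bounded on a compact interval $[R,R_0]$ and by $1$ beyond $R_0$. Taking $L$ to be the larger of the two suprema finishes the argument.

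\emph{Main obstacle.} The step needing the most care is justifying the asymptotics of $G''$, i.e.\ that $g''(G'(r))\sim p_m(p_m-1)a_m\,G'(r)^{p_m-2}$ and that this equivalence can be composed with $G'(r)\sim c\,r^{1/(p_m-1)}$. This holds because $G'(r)\to\infty$ as $r\to\infty$ and regular-variation equivalences compose with power functions. I would state this as a short lemma on inverses of polynomials with positive coefficients. The splitting identity is what makes the restriction to $\|y_i\|\ge R$ legitimate, and I expect it to be the conceptual crux.
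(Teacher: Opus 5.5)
Your argument is correct, and it follows a genuinely different and more careful route than the paper.

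\textbf{What the paper does.} It computes only the leading asymptotics $\phi^*(y)\sim c_\phi\|y\|^{p_m/(p_m-1)}$ and $\psi^*(y)\sim c_\psi\|y\|^{q_n/(q_n-1)}$ as $\|y\|\to\infty$. It observes $p_m/(p_m-1)<q_n/(q_n-1)$, and then simply asserts that comparing the leading terms of $\nabla^2\phi^*$ and $\nabla^2\psi^*$ gives the claim.

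\textbf{What you do differently.} You use the same core ingredient, the exponent gap $1/(p_m-1)<1/(q_n-1)$, but organize it through the exact splitting $D_F(y_1,y_2)=D_\Phi(r_1,r_2)+\frac{\Phi'(r_2)}{r_2}\bigl(r_1r_2-\langle y_1,y_2\rangle\bigr)$. This reduces the problem to the two scalar ratios $G''/H''$ and $G'/H'$ on $[R,\infty)$, which are exactly the ratios of the radial and tangential eigenvalues of the two Hessians. You then obtain their asymptotics by inverting $g'$ and $h'$, rather than by differentiating an asymptotic relation for $\phi^*$.

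\textbf{What your route buys.} It supplies three things the paper's sketch leaves implicit. First, the derivative asymptotics are actually justified; an equivalence $\phi^*(y)\sim c\|y\|^{s}$ by itself says nothing about $\nabla^2\phi^*$. Second, the bounded range $[R,R_0]$ is handled by continuity and positivity. Third, and most importantly, the nonconvexity of $\{\|y\|\ge R\}$ is bypassed.

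This last point is substantive. A pointwise Hessian bound cannot simply be integrated along $[y_2,y_1]$, because for $y_2=-y_1$ the segment passes through the origin. If $p_0>q_0$ (e.g.\ $\phi=\|x\|^4$, $\psi=\|x\|^2+\|x\|^3$), then $\nabla^2\phi^*$ blows up at the origin while $\nabla^2\psi^*$ stays bounded. Your splitting shows the inequality nonetheless holds, since $D_{\phi^*}(y_1,-y_1)=2rG'(r)$ is controlled by the tangential ratio alone.

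The paper's version is shorter and conveys the right heuristic: the top-degree term governs the geometry at infinity. Yours is the one that actually proves the statement as written.
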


An intuitive interpretation of Proposition~\ref{prop:dual_relative_smoothness} is that the dual relative smoothness and convexity of norm-polynomials at the neighborhood of infinity is determined by the highest order term. The proof is done by identifying the leading terms of the convex conjugates. 

Before proceeding to the proof, we first give some remarks on the proposition.
\begin{remark}
   Analogously, if $p_0 < q_0$, then for any $R>0$, there exists $\mu>0$ such that for all $y_1,y_2$ with $\|y_1\|,\|y_2\| \le R$,
   $$D_{\phi^*}(y_1, y_2) \geq \mu D_{\psi^*}(y_1, y_2).$$
\end{remark}
\begin{remark}
   Reversing the roles of $\phi$ and $\psi$, if $p_m < q_n$, then for any $R>0$, there exists $\mu>0$ such that for all $y_1,y_2$ with $\|y_1\|,\|y_2\| \ge R$,
   $$D_{\psi^*}(y_1, y_2) \leq \frac{1}{\mu} D_{\phi^*}(y_1, y_2).$$ Combining the two arguments, if $p_m=q_n$, then $\phi^*$ and $\psi^*$ are dual relatively smooth and strongly convex with respect to each other on the domain $\{y:\|y\| \ge R\}$ for any $R>0$.
\end{remark}

The proof of the proposition is given below. It relies on the asymptotic behavior of the convex conjugate of a norm-polynomial, which is also a norm-polynomial with the highest order term determined by the highest order term of the original function.
\begin{proof}
Let $\phi(x) = c \|x\|^{p_m} + o(\|x\|^{p_m})$ with $c>0$ and $p_m>1$. Its convex conjugate is
\[
\phi^*(y) = \sup_{x \in \mathbb{R}^d} \big\{ \langle y, x \rangle - \phi(x) \big\}.
\]
For large $\|y\|$, the supremum is attained along the direction of $y$, so write $x = t \frac{y}{\|y\|}$ with $t \ge 0$, giving
\[
\phi^*(y) = \sup_{t \ge 0} \left\{ t \|y\| - c t^{p_m} + o(t^{p_m}) \right\}.
\]
Maximizing $t \|y\| - c t^{p_m}$ yields $t_* = \left( \frac{\|y\|}{c p_m} \right)^{1/(p_m-1)}$, and hence when $\|y\| \to \infty$,
\[
\phi^*(y) = \left( \frac{p_m-1}{p_m} \right) (c p_m)^{-1/(p_m-1)} \|y\|^{p_m/(p_m-1)} \big( 1 + o(1) \big).
\]
Analogously, we have \[
\psi^*(y) = \left( \frac{q_n-1}{q_n} \right) (d q_n)^{-1/(q_n-1)} \|y\|^{q_n/(q_n-1)} \big( 1 + o(1) \big).
\]
Since $p_m > q_n$, we have $\frac{p_m}{p_m-1} < \frac{q_n}{q_n-1}$. Therefore, by comparing the leading terms of $\nabla^2 \phi^*$ and $\nabla^2 \psi^*$, we get the conclusion.
\end{proof}

\section{Extension to convex composite optimization problems}
\label{sec:composite}

In this section, we consider the composite optimization problem
\begin{equation}\label{eq:composite}
    \min_{x\in\mathbb{R}^n} F(x):=f(x)+g(x),
\end{equation}
where $f$ is a smooth convex function that is $\mu$-relatively convex to $\phi$ and $g$ is a possibly non-smooth convex function with an easy-to-compute proximal operator. This formulation encompasses constrained optimization by letting g be the indicator
function of a convex set K. We will extend the AAMD method to solve problem \eqref{eq:composite} by incorporating proximal updates in the mirror descent steps. 

The accelerated primal-dual mirror descent flow for composite optimizationis defined as follows:
\begin{equation}\label{eq:flow-composite}
\begin{aligned}
    x' &\in y-x-\beta\nabla\phi^*(\nabla f(x)+\partial g(x)), \\
    \nabla \phi(y)' &\in \nabla \phi(x)-\nabla \phi(y)-\frac{1}{\mu}\left(\nabla f(x)+\partial g(x)\right).
\end{aligned}
\end{equation}
We consider the Lyapunov function
\begin{equation}\label{eq:Lya-composite}
    E(x,y)=F(x)-F(x^\star)+\mu D_\phi(x^\star,y).
\end{equation} 
We can prove the following strong Lyapunov property by an analogous argument as in the smooth case.
\begin{lemma}
     Assume $f$ is $\mu$--relatively convex to $\phi$, and $g$ is a convex function. For any $z_0=(x_0,y_0)\in\mathbb{R}^{2n}$, along the flow \eqref{eq:flow-composite}, the Lyapunov function \eqref{eq:Lya-composite} satisfies the following inequality:
    \begin{equation}
        \dual{\nabla E(z),z'}\leq -E(z)-\beta D_{\phi^*}^{\mathrm{sym}}(\nabla f(x)+q(y),0),
    \end{equation}
    where $q(y)\in \partial g(y)$, $D_{\phi^*}^{\mathrm{sym}}(\chi,0)=D_{\phi^*}(\chi,0)+D_{\phi^*}(0,\chi)$ is the symmetric Bregman divergence.
\end{lemma}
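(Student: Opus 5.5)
The plan mirrors the proof of Lemma~\ref{lm:SL}, with gradients replaced by a fixed subgradient selection. Fix a selection $\xi\in\partial g(x)$ and set $\chi:=\nabla f(x)+\xi$. Read the inclusions in~\eqref{eq:flow-composite} with this common selection:
\[
x'=y-x-\beta\nabla\phi^*(\chi),\qquad (\nabla\phi(y))'=\nabla\phi(x)-\nabla\phi(y)-\tfrac1\mu\chi .
\]
The term $F(x)-F(x^\star)$ is not differentiable. We therefore use the chain rule for convex functions along absolutely continuous curves, which gives $\frac{\dd}{\dd t}(F(x)-F(x^\star))=\langle\chi,x'\rangle$ for a.e.\ $t$. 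As in Lemma~\ref{lm:SL}, with $\eta=\nabla\phi(y)$, the second term of $E$ has $\partial_\eta\bigl(\mu D_{\phi^*}(\eta,\nabla\phi(x^\star))\bigr)=\mu(y-x^\star)$. Hence
\[
\dual{\nabla E(z),z'}=\langle\chi,y-x\rangle-\beta\langle\chi,\nabla\phi^*(\chi)\rangle+\mu\langle y-x^\star,\nabla\phi(x)-\nabla\phi(y)\rangle-\langle y-x^\star,\chi\rangle .
\]

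The first and last terms combine to $-\langle\chi,x-x^\star\rangle$. I then bound this using convexity. For $f$, the three-point identity gives $\langle\nabla f(x),x-x^\star\rangle=D_f(x^\star,x)+D_f(x,x^\star)$. For $g$, the subgradient inequality gives $\langle\xi,x-x^\star\rangle\ge g(x)-g(x^\star)$. At the optimum, $0\in\nabla f(x^\star)+\partial g(x^\star)$, so $D_f(x,x^\star)$ must be replaced by $f(x)-f(x^\star)-\langle\nabla f(x^\star),x-x^\star\rangle$. The extra linear piece is absorbed by the subgradient inequality at $x^\star$ with the subgradient $-\nabla f(x^\star)\in\partial g(x^\star)$. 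Summing these pieces yields
\[
-\langle\chi,x-x^\star\rangle\le -(F(x)-F(x^\star))-D_f(x^\star,x).
\]

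The mirror term is handled exactly as in Lemma~\ref{lm:SL} via~\eqref{eq:Bregmanidentity}:
\[
\mu\langle y-x^\star,\nabla\phi(x)-\nabla\phi(y)\rangle=-\mu D_\phi(x^\star,y)+\mu D_\phi(x^\star,x)-\mu D_\phi(y,x).
\]
Combining with the previous step, the terms $-D_f(x^\star,x)+\mu D_\phi(x^\star,x)=-D_{f-\mu\phi}(x^\star,x)$ are nonpositive by relative convexity, and $-\mu D_\phi(y,x)\le0$. This leaves $-E(z)$.

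For the remaining preconditioning term, the identity $\nabla\phi^*(0)=0$ gives
\[
\langle\chi,\nabla\phi^*(\chi)\rangle=\langle\chi-0,\nabla\phi^*(\chi)-\nabla\phi^*(0)\rangle=D_{\phi^*}(\chi,0)+D_{\phi^*}(0,\chi)=D^{\mathrm{sym}}_{\phi^*}(\chi,0),
\]
which is the symmetrization identity. This produces the stated bound with $q=\xi$.

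The main obstacle I expect is bookkeeping the subgradient. The statement writes $q(y)\in\partial g(y)$, whereas the dynamics use $\partial g(x)$. I would interpret $q$ as the subgradient selected in the flow, evaluated at the primal variable $x$, so that both equations use the same $\chi$; this is the only way the cross terms cancel. I would also justify the chain rule for $g\circ x$ along absolutely continuous trajectories, either by Brezis' lemma or by assuming $g$ is finite and convex.
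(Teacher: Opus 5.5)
Your argument is correct, and its skeleton is the paper's: repeat the computation of Lemma~\ref{lm:SL} and discard $-D_{f-\mu\phi}(x^\star,x)-\mu D_\phi(y,x)$. The difference is that the paper's proof never actually handles $g$. It sets $\partial_x E=\nabla f(x)$, runs the smooth calculation verbatim, and stops at $-\beta\dual{\nabla f(x),\nabla\phi^*(\nabla f(x))}$, so what it really verifies is the $g=0$ identity \eqref{eq:SL}.

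Your extra ingredients are exactly what the composite case needs:
\begin{itemize}
\item the a.e.\ chain rule $\frac{\dd}{\dd t}g(x(t))=\dual{\xi,x'(t)}$; for finite convex $g$ this is just the subgradient inequality applied to difference quotients with $h\to0^{\pm}$, at times where $x$ and $g\circ x$ are differentiable;
\item the inequality $\dual{\xi,x-x^\star}\ge g(x)-g(x^\star)$, which turns $-\dual{\chi,x-x^\star}$ into $-(F(x)-F(x^\star))-D_f(x^\star,x)$;
\item the rewriting $\dual{\chi,\nabla\phi^*(\chi)}=D^{\mathrm{sym}}_{\phi^*}(\chi,0)$ via $\nabla\phi^*(0)=0$.
\end{itemize}

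Your reading of $q$ as the flow's selection in $\partial g(x)$ is necessary, not merely convenient: with an arbitrary $q(y)\in\partial g(y)$ the bound is false. Take $n=1$, $\phi=\frac12|\cdot|^2$, $g=\lambda|\cdot|$, and $f=\frac{\mu}{2}(\cdot-a)^2$ with $0<\mu a<\lambda$. Then $x^\star=0$ and $f-\mu\phi$ is affine. At a state with $x>0>y$ one finds $\frac{\dd}{\dd t}E=-E-\frac{\mu}{2}(x-y)^2-\beta\chi^2$ with $\chi=\lambda-\mu(a-x)$. Choosing $q(y)=-\lambda$ would instead require subtracting $\beta(\lambda+\mu(a-x))^2$, which is larger by $4\beta\lambda\mu(a-x)$. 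This excess exceeds $\frac{\mu}{2}(x-y)^2$ when $x,y$ are close to $0$.

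One small correction. The identity $\dual{\nabla f(x),x-x^\star}=D_f(x^\star,x)+D_f(x,x^\star)$ that you quote first requires $\nabla f(x^\star)=0$, which fails for composite problems. No optimality condition at $x^\star$ is needed to fix it. The exact identity $\dual{\nabla f(x),x-x^\star}=f(x)-f(x^\star)+D_f(x^\star,x)$ is just the definition of $D_f(x^\star,x)$. Adding the subgradient inequality at $x$ then gives your displayed bound directly.
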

\begin{proof}
The proof is analogous to the smooth case. Let $\eta = \nabla\phi(y)$. Differentiating $E$ gives $\partial_x E = \nabla f(x)$ and 
$$\partial_\eta E = \mu (\nabla \phi^*(\nabla \phi(y)) - \nabla \phi^*(\nabla \phi(x))) = \mu(y - x^\star).$$
A direct calculation using \eqref{eq:Bregmanidentity} yields
\[
\begin{aligned}
&\dual{\nabla E(z), z'}\\
={}&\dual{\nabla f(x),y-x-\beta\nabla\phi^*(\nabla f(x))} \\
&+\mu\dual{y-x^\star,\nabla\phi(x)-\nabla\phi(y)-\tfrac1\mu\nabla f(x)} \\
={}& -\dual{\nabla f(x),x-x^\star}
-\beta\dual{\nabla f(x),\nabla\phi^*(\nabla f(x))} \\
&+\mu D_\phi(x^\star,x)-\mu D_\phi(x^\star,y)-\mu D_\phi(y,x)
\end{aligned}
\]
which yields the desired identity \eqref{eq:SL}.

When $f-\mu\phi$ is convex, the non-positive terms can be discarded, and exponential stability follows directly from the Gr{\"o}nwall's inequality..
\end{proof}

The implicit-explicit discretization of the flow \eqref{eq:flow-composite} leads to the following scheme:
\begin{equation}\label{eq:discrete-composite}
\begin{aligned}
    x_{k+1} -x_k &= \alpha(y_k - x_{k+1}) - \alpha \beta \nabla \phi^*(\nabla f(x_k) + q_{k+1}), \quad q_k \in \partial g(x_k), \\
    \nabla \phi(y_{k+1}) - \nabla \phi(y_k)&= \alpha(\nabla \phi(x_{k+1})-\nabla\phi(y_{k+1})) - \frac{\alpha}{\mu}(\nabla f(x_{k+1}) + q_{k+1}), \quad q_{k+1} \in \partial g(x_{k+1}).
\end{aligned}
\end{equation}

The resulting algorithm, which we call AAproxMD, is summarized in Alg. \ref{alg:AAproxMD}. In the above algorithm, the $x$-updates are modified to include the subgradient of $g$, which is obtained by solving the following proximal subproblem:
\begin{equation}\label{eq:proxsubproblem}
  \begin{aligned}
     z_{k+1} &{}= \frac{1}{1+\alpha_k}(x_k+\alpha_k y_k),\\
    x_{k+1} &{}= \arg\min_{x\in\mathbb{R}^n} \frac{1}{L_k(1+\alpha_k)}\phi\left(-L_k(1+\alpha_k)\left(x-z_{k+1}\right)\right)+\dual{\nabla f(x_{k}),x}+g(x).
  \end{aligned}
\end{equation}
The $y$-updates uses the subgradient $q_{k+1}$ computed in the $x$-update step. The line search procedure for $L_k$ and the spectral update remain the same as in the smooth case. In the case $\mu=0$, we employ the same homotopy strategy as in the smooth case in Appendix \ref{sec:homotopy_proof}.

\begin{example}[LASSO problem]\label{ex: Lasso}
Consider the LASSO problem
\begin{equation}
    \min_{x \in \mathbb{R}^d} F(x) := \frac{1}{2} \|Ax - b\|^2 + \lambda \|x\|_1, \quad \phi(x)=\frac{1}{2} x^\top D x,
\end{equation}
where $A \in \mathbb{R}^{n \times d}$ with $n<d$ is row full rank and $D=\mathrm{diag}(A^\top A)$. Since $D$ is diagonal and positive definite, the subproblem described in Algorithm \ref{alg:AAproxMD} admits a closed-form solution given by a generalized soft-thresholding operator. Let $L$ be the largest eigenvalues of $D^{-1/2}A^\top AD^{-1/2}$; then $f(x)=\frac{1}{2}\|Ax-b\|^2$ is 
$L$-dual relatively smooth to $\phi$. Thus, the linesearch iteration for $L_k$ terminates within the upper bound $L_k=L$ if any. Therefore, by Theorem \ref{thm:linearprecond} and the Accumulative adaptive framework, AAproxMD achieves an accelerated linear convergence rate of $O\left((1+\sqrt{\mu/L})^{-k}\right)$.

\begin{algorithm}[t]
\caption{AAproxMD}
\label{alg:AAproxMD}
\begin{spacing}{1.2} 
\begin{algorithmic}[1]
\STATE \textbf{Input:} $x_0, y_0 \in \mathbb{R}^n$ and $\mu>0$, $L_0=1$, $\alpha_0=1$, $p_{-1}=0$
\FOR{$k=0, 1, \dots$}
  \REPEAT
    \STATE $z_{k+1} = \frac{1}{1+\alpha_k}(x_k + \alpha_k y_k)$
    \STATE $x_{k+1} = \arg\min_{x\in\mathbb{R}^n} \left \{\frac{1}{L_k(1+\alpha_k)}\phi\left(-L_k(1+\alpha_k)\left(x-z_{k+1}\right)\right)+\dual{\nabla f(x_{k}),x}+g(x)\right \}$
    \STATE $q_{k+1} = \nabla \phi(L_k(1+\alpha_k)(x_{k+1}-z_{k+1})) - \nabla f(x_k)$
    \STATE $y_{k+1} = \text{prox}_{\phi}\left( \text{dual-update terms} \right)$ \COMMENT{See \eqref{eq:scheme}}
    \STATE $p_{k} = \frac{1}{1+\alpha_k}(p_{k-1} + \sum_{i=1}^3 b_k^{(i)})$
    \IF{$p_{k} > 0$}
      \STATE update $L_k$ and $\alpha_k$ via adaptive linesearch
    \ENDIF
  \UNTIL{$p_{k} \le 0$}
  \STATE $L_{k+1} = \frac{D_{\phi^*}(\nabla f(x_{k+1}) + q_{k+1}, \nabla f(x_k) + q_{k+1})}{D_F(x_k, x_{k+1})}$
  \STATE $\alpha_{k+1} = \sqrt{\mu/L_{k+1}}$
\ENDFOR
\end{algorithmic}
\end{spacing}
\end{algorithm}

The following descent property follows from the strong Lyapunov property and the comparison to the implicit Euler scheme. Its proof is similar to that of Lemma \ref{lem:descentidentity} and is thus omitted.

\begin{lemma}[Descent property]\label{lem:descentidentity-composite}
    Let $z_k=(x_k,y_k)$ be generated by \eqref{eq:scheme}. For any $z_0\in\mathbb{R}^{2n}$, the following identity holds:
    \begin{equation}\begin{aligned}
        (1+\alpha)E(z_{k+1})-E(z_k)\leq{}& -\alpha D_{f-\mu\phi}(x^\star,x_{k+1})-\alpha\mu D_\phi(y_{k+1},x_{k+1})-\alpha\beta D_{\phi^*}(0,\nabla f(x_k)+q_k)\\&{}+\alpha\beta D_\phi^*(\nabla f(x_{k+1})+q_{k+1},\nabla f(x_k)+q_k)-D_F(x_k,x_{k+1})\\
        &{}+\alpha\dual{\nabla f(x_{k+1})+q_{k+1},y_{k}-y_{k+1}}\\
        &{}-\alpha\beta D_{\phi^*}(\nabla f(x_{k+1})+q_{k+1},0)-\mu D_\phi(y_{k+1},y_k).\end{aligned}\end{equation}
\end{lemma}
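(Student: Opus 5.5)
We follow the proof of Lemma~\ref{lem:descentidentity}, with the Lyapunov function \eqref{eq:Lya-composite} in place of \eqref{eq:Lya}. Write $E(z)=F(x)-F(x^\star)+\mu D_\phi(x^\star,y)$ and split the energy difference into an $x$-part and a $y$-part.

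\textbf{The $y$-part.} Here we use the three-point identity \eqref{eq:Bregmanidentity} for $\phi$:
\[
\mu D_\phi(x^\star,y_{k+1})-\mu D_\phi(x^\star,y_k)=\mu\dual{\nabla\phi(y_{k+1})-\nabla\phi(y_k),y_{k+1}-x^\star}-\mu D_\phi(y_{k+1},y_k).
\]
This produces the term $-\mu D_\phi(y_{k+1},y_k)$. Substituting the $y$-update of \eqref{eq:discrete-composite} for $\nabla\phi(y_{k+1})-\nabla\phi(y_k)$ gives
\[
\alpha\mu\dual{\nabla\phi(x_{k+1})-\nabla\phi(y_{k+1}),y_{k+1}-x^\star}-\alpha\dual{\nabla f(x_{k+1})+q_{k+1},y_{k+1}-x^\star}.
\]
A second three-point identity turns the first inner product into
\[
\alpha\mu\bigl(D_\phi(x^\star,x_{k+1})-D_\phi(x^\star,y_{k+1})-D_\phi(y_{k+1},x_{k+1})\bigr).
\]
This supplies the $-\alpha\mu D_\phi(y_{k+1},x_{k+1})$ term, and the $-\alpha\mu D_\phi(x^\star,y_{k+1})$ term is moved into $\alpha E(z_{k+1})$.

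\textbf{The $x$-part.} Since $q_{k+1}\in\partial g(x_{k+1})$, convexity of $g$ gives the subgradient inequality
\[
F(x_{k+1})-F(x_k)\le\dual{\nabla f(x_{k+1})+q_{k+1},x_{k+1}-x_k}-D_f(x_k,x_{k+1}),
\]
where the slack from $g$ is what we write as $-D_F(x_k,x_{k+1})$ (with $D_F$ understood via the chosen subgradient). This is where the identity becomes an inequality. Next we substitute
\[
x_{k+1}-x_k=\alpha(y_k-x_{k+1})-\alpha\beta\nabla\phi^*(\nabla f(x_k)+q_{k+1})
\]
from \eqref{eq:discrete-composite}. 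We write $y_k-x_{k+1}=(y_{k+1}-x_{k+1})+(y_k-y_{k+1})$. The piece $\alpha\dual{\nabla f(x_{k+1})+q_{k+1},y_k-y_{k+1}}$ is the cross term in the statement. The piece $\alpha\dual{\nabla f(x_{k+1})+q_{k+1},y_{k+1}-x_{k+1}}$ combines with the $y$-part term $-\alpha\dual{\nabla f(x_{k+1})+q_{k+1},y_{k+1}-x^\star}$ to give $-\alpha\dual{\nabla f(x_{k+1})+q_{k+1},x_{k+1}-x^\star}$. We bound this by convexity of $g$ and the three-point identity for $f$:
\[
-\alpha\dual{\nabla f(x_{k+1})+q_{k+1},x_{k+1}-x^\star}\le-\alpha\bigl(F(x_{k+1})-F(x^\star)\bigr)-\alpha D_f(x^\star,x_{k+1}).
\]
Together with the leftover $\alpha\mu D_\phi(x^\star,x_{k+1})$ from the $y$-part, this gives $-\alpha D_{f-\mu\phi}(x^\star,x_{k+1})$. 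The $F$-gap is likewise absorbed into $\alpha E(z_{k+1})$.

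\textbf{The preconditioned cross term.} What remains is $-\alpha\beta\dual{\nabla f(x_{k+1})+q_{k+1},\nabla\phi^*(\chi_k)}$. We apply the three-point identity for $\phi^*$ with points $\nabla f(x_{k+1})+q_{k+1}$, $0$, $\chi_k$, using $\nabla\phi^*(0)=0$. This yields
\[
-\alpha\beta D_{\phi^*}(\nabla f(x_{k+1})+q_{k+1},0)-\alpha\beta D_{\phi^*}(0,\chi_k)+\alpha\beta D_{\phi^*}(\nabla f(x_{k+1})+q_{k+1},\chi_k),
\]
exactly as in Lemma~\ref{lem:descent}.

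\textbf{Main obstacle.} The main obstacle is the bookkeeping of subgradients. The scheme \eqref{eq:discrete-composite} uses $\nabla f(x_k)+q_{k+1}$ inside $\nabla\phi^*$, while the statement writes $\nabla f(x_k)+q_k$. I would fix $\chi_k:=\nabla f(x_k)+q_{k+1}$, which is the argument actually produced by the prox step \eqref{eq:proxsubproblem}, matching Line~13 of Algorithm~\ref{alg:AAproxMD}. I would then read $q_k$ in the statement as this choice, since $g$ enters only through subgradient inequalities. I would verify that the optimality condition of \eqref{eq:proxsubproblem} indeed yields $q_{k+1}\in\partial g(x_{k+1})$ with this $\chi_k$.
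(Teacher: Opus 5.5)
Your argument is correct and is the same computation as the paper's omitted proof, which mirrors Lemma~\ref{lem:descentidentity}. There, expanding $E$ at $z_{k+1}$ and splitting the step into an implicit-Euler part plus a correction reduces exactly to your direct three-point and subgradient expansions of the $F$-part and the $D_\phi$-part, with the residual $-\alpha\beta\langle \nabla f(x_{k+1})+q_{k+1},\nabla\phi^*(\chi_k)\rangle$ expanded as in Lemma~\ref{lem:descent}. Reading the statement with $\chi_k=\nabla f(x_k)+q_{k+1}$ and with $D_F$ defined through the subgradient $q_{k+1}$ is the right repair; with that $D_F$ your $x$-part step is actually an equality, and the only genuine inequality is the subgradient bound for $g$ at $x^\star$.
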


We define perturbations $b_k^{(i)}$, $i=1,2,3$ and $p_k$ in the same way as in the smooth case. We establish similar convergence result for AAMD-0 in both strongly and weakly convex cases. Proof is omitted due to its similarity to the smooth case.

\begin{theorem}[strongly convex case]\label{thm:composite-stronglyconvex}
Assume $f - \mu \phi$ is convex with constant $\mu>0$, and $g$ is a possibly non-smooth convex function with an easy-to-compute proximal operator. Assume $\{z_k\}$ is produced by Algorithm \ref{alg:AAproxMD}. For the accumulative perturbation $p_k$ defined by $p_{-1}=0$ and $p_k = \frac{1}{1+\alpha_k}(p_{k-1} + b_k)$, $k\geq 0$, the Lyapunov energy \eqref{eq:Lya} satisfies:
\begin{equation}
E(z_{k+1}) \le \left( \prod_{i=0}^k \frac{1}{1+\alpha_i} \right) E(z_0).
\end{equation}
\end{theorem}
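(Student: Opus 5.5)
The plan mirrors the proof of Theorem~\ref{thm:accumulative}, with Lemma~\ref{lem:descentidentity-composite} in place of Lemma~\ref{lem:descentidentity}. Throughout, $E$ is the composite energy~\eqref{eq:Lya-composite}, and $b_k=b_k^{(1)}+b_k^{(2)}+b_k^{(3)}$ is built from the terms of that lemma.

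\textbf{Step 1: a one-step inequality.} In Lemma~\ref{lem:descentidentity-composite} we set $\alpha=\alpha_k$ and $\beta=\beta_k$ with $\alpha_k\beta_k=1/L_k$, and we drop the two terms $-\alpha_k D_{f-\mu\phi}(x^\star,x_{k+1})$ and $-\alpha_k\mu D_\phi(y_{k+1},x_{k+1})$. Both are nonpositive, since $f-\mu\phi$ and $\phi$ are convex. We then group what remains:
\begin{itemize}
\item $b_k^{(1)}=\frac1{L_k}D_{\phi^*}(\nabla f(x_{k+1})+q_{k+1},\nabla f(x_k)+q_k)-D_F(x_k,x_{k+1})$;
\item $b_k^{(2)}=\alpha_k\dual{\nabla f(x_{k+1})+q_{k+1},y_k-y_{k+1}}-\frac1{L_k}D_{\phi^*}(\nabla f(x_{k+1})+q_{k+1},0)-\mu D_\phi(y_{k+1},y_k)$;
\item $b_k^{(3)}=-\frac1{L_k}D_{\phi^*}(0,\nabla f(x_k)+q_k)$.
\end{itemize}
This gives
\[
E(z_{k+1})\le \frac{1}{1+\alpha_k}E(z_k)+\frac{1}{1+\alpha_k}b_k .
\]
The factor $1/(1+\alpha_k)$ on $b_k$ matches the recursion defining $p_k$, and multiplying by it only tightens the estimate, since we end up using $p_k\le0$.

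\textbf{Step 2: induction.} We prove
\[
E(z_{k+1})\le \Bigl(\prod_{i=0}^k\frac1{1+\alpha_i}\Bigr)E(z_0)+p_k .
\]
The case $k=0$ is Step 1 with $p_{-1}=0$. For the inductive step, insert the hypothesis for $E(z_k)$ into Step 1:
\[
E(z_{k+1})\le \frac{1}{1+\alpha_k}\Bigl[\Bigl(\prod_{i=0}^{k-1}\frac1{1+\alpha_i}\Bigr)E(z_0)+p_{k-1}+b_k\Bigr],
\]
and the bracketed remainder equals $p_k$ by definition.

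\textbf{Step 3: conclusion.} The \texttt{repeat}/\texttt{until} loop of Algorithm~\ref{alg:AAproxMD} exits only when $p_k\le0$. Dropping $p_k$ therefore gives the claim.

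\textbf{Checks needed.}
\begin{itemize}
\item We must confirm that the accepted $(L_k,\alpha_k)$ and the iterates fed into $b_k$ are the ones actually used to define $z_{k+1}$, which is true by construction of the loop.
\item We must argue that the loop terminates. Each backtracking step either increases $L_k$ by a factor $c_1$ or decreases $\alpha_k$ by a factor $c_2$. For large $L_k$ and small $\alpha_k$, $b_k^{(1)}$ and $b_k^{(2)}$ become nonpositive (the former under local dual relative smoothness of $f$ along the step), so $p_k\le p_{k-1}/(1+\alpha_k)\le0$.
\end{itemize}

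\textbf{Main obstacle.} The delicate point is Step 1 itself: Lemma~\ref{lem:descentidentity-composite} must hold with the stated mixed subgradients. The $x$-update uses $q_{k+1}\in\partial g(x_{k+1})$ from the prox subproblem, while $b_k^{(1)}$ and $b_k^{(3)}$ involve $\nabla f(x_k)+q_k$. Since the lemma's proof is omitted, I would verify it directly. I would write $E(z_{k+1})-E(z_k)$ using the subgradient inequality $g(x_k)\ge g(x_{k+1})+\dual{q_{k+1},x_k-x_{k+1}}$, which replaces the Bregman remainder of $g$ by a nonpositive term and yields $D_F(x_k,x_{k+1})$ with the subgradient $q_{k+1}$. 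I would then follow the implicit-Euler comparison of Lemma~\ref{lem:descentidentity}, and apply the three-point identity for $\phi^*$ to the cross term $\dual{\nabla f(x_{k+1})+q_{k+1},\nabla\phi^*(\nabla f(x_k)+q_{k+1})}$. If the indices of $q$ do not align, I would redefine $b_k^{(1)}$ and $b_k^{(3)}$ with $q_{k+1}$ in both slots, consistent with line~13 of Algorithm~\ref{alg:AAproxMD}; the induction above is unaffected.
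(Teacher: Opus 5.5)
Your route is the paper's. The paper omits the proof as identical to Theorem~\ref{thm:accumulative}: divide the composite descent inequality by $1+\alpha_k$, induct to $E(z_{k+1})\le\bigl(\prod_{i=0}^k\frac{1}{1+\alpha_i}\bigr)E(z_0)+p_k$, and use the exit condition $p_k\le 0$. Your suspicion about the subgradient indices is also correct. The $x$-update in \eqref{eq:discrete-composite}, equivalently the prox step, uses $\nabla\phi^*(\nabla f(x_k)+q_{k+1})$. The three-point identity for $\phi^*$ therefore produces $D_{\phi^*}(\nabla f(x_{k+1})+q_{k+1},\nabla f(x_k)+q_{k+1})$ and $D_{\phi^*}(0,\nabla f(x_k)+q_{k+1})$, not the $q_k$ versions printed in Lemma~\ref{lem:descentidentity-composite}. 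So your redefinition, consistent with line~13, is the right one. Reading $E$ as the composite energy \eqref{eq:Lya-composite} is also the right reading.

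There is one bookkeeping slip that breaks Step~3 as written. You discard $-\alpha_k\mu D_\phi(y_{k+1},x_{k+1})$ and leave it out of $b_k^{(3)}$. But the algorithm's $b_k^{(i)}$ are defined as in the smooth case \eqref{eq:residuals}, where $b_k^{(3)}$ contains exactly this term. Hence your $b_k$ exceeds the algorithm's by $\alpha_k\mu D_\phi(y_{k+1},x_{k+1})\ge 0$, and by induction your $p_k$ is at least the algorithm's $p_k$. The exit test of the \texttt{repeat} loop certifies only that the algorithm's $p_k\le 0$, so it says nothing about yours.

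The fix is to drop only $-\alpha_k D_{f-\mu\phi}(x^\star,x_{k+1})$, as the paper does in Lemma~\ref{lem:descent}, and keep $-\alpha_k\mu D_\phi(y_{k+1},x_{k+1})$ inside $b_k^{(3)}$. Steps~1--2 are unchanged, and Step~3 then applies to the same $p_k$ the algorithm enforces.

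Your termination check is not needed for the statement. The hypothesis that $\{z_k\}$ is produced by the algorithm already presupposes that each loop exits.
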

Accelerated linear convergence follows directly from Appendix \ref{sec:dis_examples}. 

\begin{algorithm}[t]
\caption{AAproxMD-0}\label{alg:AAproxMD-0}
\begin{spacing}{1.2} 
\begin{algorithmic}[1]
\STATE Initialize $x_0,y_0\in\mathbb{R}^n$, $L_0=\alpha_0=\varepsilon_0=1$, $m=m_0$, $s=0$, $k_0=0$ 
\FOR{$k=0,1,2,\dots$}
  \STATE Apply Algorithm~\ref{alg:AAproxMD} with parameter $\varepsilon_s$ for one step:
 $(x_{k+1}, y_{k+1}) = {\rm AAMDcomposite}(x_k, y_k, \varepsilon_s, 1)$
  \IF{$E_k\leq E_{k_s}/2$ \textbf{or} $k\geq k_s+m$}
    \STATE $\varepsilon_{s+1}\gets\varepsilon_s/2$, \quad $m\gets\lfloor\sqrt{2}\,m\rfloor+1$
  \ENDIF
\ENDFOR
\end{algorithmic}
\end{spacing}
\end{algorithm}

\begin{theorem}\label{thm:composite-homotopy}
Assume $f$ and $Lf^* - \phi^*$ are convex, and $g$ is a possibly non-smooth convex function with an easy-to-compute proximal operator. Let $\{x_k\}$ be generated by Algorithm \ref{alg:AAproxMD-0}. Assume $D_\phi(x^\star, x_k) \le \tfrac12 R^2$ for all $k\geq 0$. Let  $k_s$ be the total number of steps after halving $\varepsilon$ exactly $s$ times, i.e. $\varepsilon = 2^{-s}\varepsilon_0$. There exists a constant $C > 0$ so that
$$
\frac{E_{k_s}}{E_0} \leq \frac{R^2 + 1}{\left( C k_s + \varepsilon_0^{-1/2} \right)^2} = \mathcal{O}\left( \frac{1}{k_s^2} \right)
$$
So $\mathcal O(\sqrt{1/{\rm tol}})$  iteration steps can achieve $E_{k_s}/E_0\leq {\rm tol}$. 
\end{theorem}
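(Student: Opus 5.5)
The plan is to follow the smooth case (Theorem~\ref{thm:homotopy} and Appendix~\ref{sec:homotopy_proof}) almost verbatim. Instead of the smooth descent identity, we use the composite descent property of Lemma~\ref{lem:descentidentity-composite}. Throughout, $E$ denotes the composite Lyapunov function~\eqref{eq:Lya-composite} with $\mu$ replaced by the homotopy parameter $\epsilon$:
$$
\mathcal E(x,y,\epsilon)=F(x)-F(x^\star)+\epsilon D_\phi(x^\star,y).
$$

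\textbf{Step 1 (fixed $\epsilon$).} For fixed $\epsilon$, apply Lemma~\ref{lem:descentidentity-composite} with $\mu=\epsilon$. Since $f$ is only convex, the term $-\alpha D_{f-\epsilon\phi}(x^\star,x_{k+1})$ is no longer nonpositive. We split it as $-\alpha D_f(x^\star,x_{k+1})+\alpha\epsilon D_\phi(x^\star,x_{k+1})$ and drop the first piece. For the composite part we use the convexity of $g$: we need $F(x_{k+1})-F(x^\star)\le\langle \nabla f(x_{k+1})+q_{k+1},x_{k+1}-x^\star\rangle$, where $q_{k+1}\in\partial g(x_{k+1})$ is the subgradient produced in line~6 of Algorithm~\ref{alg:AAproxMD}. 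The boundedness hypothesis $D_\phi(x^\star,x_k)\le R^2/2$ then yields
$$
\mathcal E_{k+1}\le \frac{1}{1+\alpha_k}\mathcal E_k+\frac{\alpha_k\epsilon}{1+\alpha_k}\frac{R^2}{2}+\frac{1}{1+\alpha_k}b_k .
$$
The perturbations $b_k^{(i)}$ are the composite analogues defined after Lemma~\ref{lem:descentidentity-composite}, with $\nabla f$ replaced by $\nabla f+q$.

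We unroll this recursion with the accumulated budget $p_k$, exactly as in Theorem~\ref{thm:accumulative}, and use $p_k\le0$, which the line search enforces. We also use the two parameter bounds $\alpha_k\ge \sqrt{\epsilon/L}/c_\phi$ and $\alpha_k\le 2\sqrt{\epsilon/L}$. Together these give the analogue of Theorem~\ref{thm:convergence rate of fixed epsilon}:
$$
\mathcal E_{k+1}\le (1+\sqrt{\epsilon/L}/c_\phi)^{-(k+1)}\mathcal E_0+\epsilon R^2 .
$$

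\textbf{Step 2 (induction over stages).} Next we show by induction over the stages that $\mathcal E(x_{k_s},y_{k_s},\epsilon_s)\le (R^2+1)\epsilon_s$. The stage length $m\sim \epsilon_s^{-1/2}\sqrt L\ln(2(R^2+1))$ is chosen so that the contraction factor $(1+\alpha)^{-m}$ is at most $1/(2(R^2+1))$. Two monotonicity facts close the induction. First, $\mathcal E(\cdot,\cdot,\epsilon_{s+1})\le\mathcal E(\cdot,\cdot,\epsilon_s)$ because $\epsilon_{s+1}\le\epsilon_s$. Second, $\epsilon_s/2+\epsilon_{s+1}R^2=(R^2+1)\epsilon_{s+1}$. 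If a stage ends early through the decay test $E_k\le E_{k_s}/2$, the same bound holds trivially.

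\textbf{Step 3 (counting iterations).} Summing the geometric series $\sum_i\epsilon_i^{-1/2}$ gives $k_s\le C^{-1}(\epsilon_s^{-1/2}-\epsilon_0^{-1/2})$. Inverting this relation and dividing by $E_0$, with the normalization $\epsilon_0$ chosen comparable to the initial energy, gives the stated $\mathcal O(1/k_s^2)$ bound.

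\textbf{Main obstacle.} The hard part is Step~1, specifically the lower bound $\alpha_k\ge\sqrt{\epsilon/L}/c_\phi$ in the composite setting. This needs the compatibility condition (Assumption~\ref{ass:compatibility}) applied with $\nabla f(x_{k+1})+q_{k+1}$ in place of $\nabla f(x_{k+1})$. It also needs the line search on $L_k$ to terminate below $L$ even though $b_k^{(1)}$ now involves $D_{\phi^*}(\nabla f(x_{k+1})+q_{k+1},\nabla f(x_k)+q_{k+1})$. I would first check whether a common subgradient shift $q_{k+1}$ cancels inside the Bregman divergence, so that dual relative smoothness of $f$ alone suffices. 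If it does not, I would assume, as the smooth case implicitly does, that $\alpha_k$ stays within constant factors of $\sqrt{\epsilon/L}$.
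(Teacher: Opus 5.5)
Your proposal follows the paper's route. The paper omits the composite proof as a direct transfer of the smooth-case homotopy analysis in Appendix~\ref{sec:homotopy_proof}: fixed-$\epsilon$ decay with the extra $\epsilon R^2$ term from $\alpha_k\epsilon D_\phi(x^\star,x_{k+1})\le \alpha_k\epsilon R^2/2$, the stagewise induction $\mathcal E\le (R^2+1)\epsilon_s$, and summation of stage lengths $\propto\epsilon_i^{-1/2}$, which are exactly your Steps 1--3. The obstacle you flag is real, and the paper resolves it the way your fallback does: the bounds $\sqrt{\epsilon/L}/c_\phi\le\alpha_k\le 2\sqrt{\epsilon/L}$ are assumed rather than derived, and the shift by $q_{k+1}$ does not cancel inside $D_{\phi^*}$ unless $\phi$ is quadratic (as in the LASSO example), so dual relative smoothness of $f$ alone does not bound the composite line search on $L_k$.
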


\end{example}
\section{Boundedness of iterates}
\label{sec:boundedness}
In this section, we show that the iterates of the proposed method are bounded under mild assumptions. We first show that the iterates are bounded unconditionally in the continuous time limit, and then we show that the iterates of the discrete time method are also bounded by approximating the discrete time dynamics with the continuous time dynamics. The main idea is a new Lyapunov function
\begin{equation}\label{eq:lyapunov_appendix}
\mathcal{E}(x,y)=f(x)-f(x^\star)+\mu D_\phi(x,y).
\end{equation}
Different from \eqref{eq:Lya}, the $y$-part of the Lyapunov function is the Bregman divergence between $x$ and $y$, which is computably tractable.
\subsection{Continuous time analysis}
Consider the continuous time dynamics
\[
x'=y-x,
\]
and
\[
(\nabla \phi(y))'
=\nabla \phi(x)-\nabla \phi(y)-\frac1\mu \nabla f(x).
\]
Compared to the flow \eqref{eq:flow} in the main text, the flow here does not have the extra gradient descent term. This term is mainly used for balancing the positive terms produced by the discretization, and will be added back in the discrete time analysis. We have the following result.
\begin{theorem}
Consider the continuous time dynamics defined above. The Lyapunov function defined in \eqref{eq:lyapunov_appendix} satisfies the exact identity
\[
\mathcal{E}'
=-2\mu \,\langle \nabla\phi(x)-\nabla\phi(y),x-y\rangle.
\]
Therefore, \(\mathcal{E}'\le 0\) and the iterates are bounded.
\end{theorem}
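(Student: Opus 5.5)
The plan is a direct chain-rule computation, in the same spirit as the proof of Lemma~\ref{lm:SL}. As there, I would parametrize the $y$-part by the dual variable $\eta=\nabla\phi(y)$, because the flow prescribes $\eta'$ rather than $y'$. Using the Fenchel--Young form of the Bregman divergence,
$$
D_\phi(x,y)=\phi(x)+\phi^*(\eta)-\langle \eta,x\rangle,\qquad \eta=\nabla\phi(y),
$$
the partial derivatives of $\mathcal{E}$ in~\eqref{eq:lyapunov_appendix} are
$$
\partial_x\mathcal{E}=\nabla f(x)+\mu\bigl(\nabla\phi(x)-\nabla\phi(y)\bigr),\qquad \partial_\eta\mathcal{E}=\mu\bigl(\nabla\phi^*(\eta)-x\bigr)=\mu(y-x).
$$
Here I use $\nabla\phi^*(\nabla\phi(y))=y$, which holds because $\phi$ is Legendre.

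Next I would pair these partial derivatives with $x'=y-x$ and $\eta'=\nabla\phi(x)-\nabla\phi(y)-\tfrac1\mu\nabla f(x)$. The $x$-part contributes
$$
\langle\nabla f(x),y-x\rangle+\mu\langle\nabla\phi(x)-\nabla\phi(y),y-x\rangle.
$$
The $\eta$-part contributes
$$
\mu\langle y-x,\nabla\phi(x)-\nabla\phi(y)\rangle-\langle y-x,\nabla f(x)\rangle.
$$
The two gradient terms $\pm\langle\nabla f(x),y-x\rangle$ cancel exactly. This cancellation is the reason for the $\tfrac1\mu$ scaling in the $y$-dynamics. The two mirror terms coincide, and together they give
$$
\mathcal{E}'=-2\mu\langle\nabla\phi(x)-\nabla\phi(y),x-y\rangle,
$$
which is the claimed identity. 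Since $\phi$ is convex, $\nabla\phi$ is monotone; equivalently, the right-hand side equals $-2\mu\bigl(D_\phi(x,y)+D_\phi(y,x)\bigr)$ by the symmetrization identity. Hence $\mathcal{E}'\le 0$, with equality only when $x=y$ because $\phi$ is strictly convex.

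For boundedness, monotonicity of $\mathcal{E}$ gives $f(x(t))-f(x^\star)\le\mathcal{E}(0)$ and $\mu D_\phi(x(t),y(t))\le\mathcal{E}(0)$ for all $t\ge0$. I expect this step to be the main obstacle, because the identity is routine but the conclusion needs some coercivity. For $x$, I would invoke bounded sublevel sets of $f$, which is the bounded-sublevel-set hypothesis the paper already adopts. For $y$, I would use that $D_\phi(x,\cdot)$ has bounded sublevel sets uniformly for $x$ in a bounded set. This holds, for example, when $\phi$ is strongly convex or supercoercive, or when $\nabla\phi^*$ maps bounded sets to bounded sets. I would state this mild condition explicitly rather than claim unconditional boundedness.

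Finally, to connect with~\eqref{eq:boundedness}, I would note that a bounded pair $(x^\star,x(t))$ has bounded $D_\phi(x^\star,x(t))$ by continuity of $\phi$ and $\nabla\phi$ on compact subsets of the interior of the domain. This yields the constant $R$ used in Theorem~\ref{thm:homotopy}.
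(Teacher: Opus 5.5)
Your computation is correct and is essentially the paper's proof: a direct chain-rule differentiation in which the $\pm\langle\nabla f(x),y-x\rangle$ terms cancel and the two mirror terms combine into $-2\mu\langle\nabla\phi(x)-\nabla\phi(y),x-y\rangle$. The only cosmetic difference is that you differentiate in the dual variable $\eta=\nabla\phi(y)$ via the Fenchel--Young form, whereas the paper differentiates $D_\phi(x,y)$ in $y$ and lets the $y'$ terms cancel. On the boundedness clause, the paper's proof simply asserts it from $\mathcal{E}'\le 0$. Your explicit coercivity hypotheses (bounded sublevel sets of $f$ and of $D_\phi(x,\cdot)$) are what that part actually needs, so they are not a gap in your argument.
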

\begin{proof}
First,
\[
\frac{d}{dt}f(x)=\langle \nabla f(x),x'\rangle
=\langle \nabla f(x),y-x\rangle .
\]

Now differentiate the Bregman divergence:
\[
D_\phi(x,y)
=\phi(x)-\phi(y)-\langle \nabla\phi(y),x-y\rangle.
\]

Using the chain rule,
\[
\begin{aligned}
\frac{d}{dt}D_\phi(x,y)
&=
\langle \nabla\phi(x),x'\rangle
-\langle \nabla\phi(y),y'\rangle \\
&\quad
-\left\langle (\nabla\phi(y))',x-y\right\rangle
-\left\langle \nabla\phi(y),x'-y'\right\rangle .
\end{aligned}
\]

The \(y'\)-terms cancel:
\[
-\langle \nabla\phi(y),y'\rangle
+\langle \nabla\phi(y),y'\rangle=0,
\]
so
\[
\frac{d}{dt}D_\phi(x,y)
=
\langle \nabla\phi(x)-\nabla\phi(y),x'\rangle
-\left\langle (\nabla\phi(y))',x-y\right\rangle.
\]

Substitute \(x'=y-x\):
\[
\begin{aligned}
\frac{d}{dt}D_\phi(x,y)
&=
\langle \nabla\phi(x)-\nabla\phi(y),y-x\rangle \\
&\quad
-\left\langle
\nabla\phi(x)-\nabla\phi(y)-\frac1\mu\nabla f(x),
x-y
\right\rangle .
\end{aligned}
\]

Since \(y-x=-(x-y)\),
\[
\langle \nabla\phi(x)-\nabla\phi(y),y-x\rangle
=
-\langle \nabla\phi(x)-\nabla\phi(y),x-y\rangle.
\]

Hence
\[
\begin{aligned}
\frac{d}{dt}D_\phi(x,y)
&=
-2\langle \nabla\phi(x)-\nabla\phi(y),x-y\rangle \\
&\quad
+\frac1\mu\langle \nabla f(x),x-y\rangle .
\end{aligned}
\]

Multiplying by \(\mu\),
\[
\mu \frac{d}{dt}D_\phi(x,y)
=
-2\mu \langle \nabla\phi(x)-\nabla\phi(y),x-y\rangle
+\langle \nabla f(x),x-y\rangle .
\]

Finally,
\[
\begin{aligned}
\mathcal{E}'
&=
\langle \nabla f(x),y-x\rangle
+\mu \frac{d}{dt}D_\phi(x,y) \\
&=
-\langle \nabla f(x),x-y\rangle
+\langle \nabla f(x),x-y\rangle \\
&\quad
-2\mu \langle \nabla\phi(x)-\nabla\phi(y),x-y\rangle.
\end{aligned}
\]

Therefore,
\[
\mathcal{E}'
=
-2\mu \,\langle \nabla\phi(x)-\nabla\phi(y),x-y\rangle .
\]

By convexity of \(\phi\),
\[
\langle \nabla\phi(x)-\nabla\phi(y),x-y\rangle \ge 0,
\]
so
\[
\mathcal{E}'\le 0.
\]

If \(\phi\) is strictly convex, then \(\mathcal{E}'=0\) iff \(x=y\).
\end{proof}

\subsection{Discrete time analysis}
Now, we analyze the discrete time method. The main idea is to approximate the discrete time dynamics with the continuous time dynamics, and then use the boundedness of the continuous time dynamics to show that the iterates of the discrete time method are also bounded. We have the following result.
\begin{theorem}
Consider the iterative scheme
\begin{align*}
x_{k+1}-x_k
&=
\alpha (y_k-x_{k+1})
-\frac{1}{L}\nabla\phi^*(\nabla f(x_k)),\\[6pt]
\nabla\phi(y_{k+1})-\nabla\phi(y_k)
&=
\alpha\big(\nabla\phi(x_{k+1})-\nabla\phi(y_{k+1})\big)
-\frac{\alpha}{\mu}\nabla f(x_{k+1}).
\end{align*}

Then the Lyapunov function \eqref{eq:lyapunov_appendix} satisfies the exact identity
\[
\mathcal{E}_{k+1}-\mathcal{E}_k
=
-\frac{1}{L}D_{\phi^*}(0,\nabla f(x_k))+O(\mu).
\]
Therefore, choosing $\mu$ sufficiently small, we have
\[
\mathcal{E}_{k+1}-\mathcal{E}_k
\le
0.
\]
\end{theorem}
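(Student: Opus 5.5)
Write $\mathcal E_k=\mathcal{E}(x_k,y_k)$ and $g_k=\nabla f(x_k)$. The plan is to mimic the continuous computation, which gave $\mathcal E'=-2\mu\langle\nabla\phi(x)-\nabla\phi(y),x-y\rangle$. We split $\mathcal E_{k+1}-\mathcal E_k$ into an $f$-part and a $\mu D_\phi$-part and show that the $\mathcal O(1)$ contributions cancel as in the flow, except for the term produced by the extra dual step.

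\textbf{Step 1 ($f$-part).} By the Bregman expansion,
\[
f(x_{k+1})-f(x_k)=\langle g_k,x_{k+1}-x_k\rangle+D_f(x_{k+1},x_k).
\]
Substitute the $x$-update in the form
\[
x_{k+1}-x_k=\alpha(y_k-x_{k+1})-\tfrac1L\nabla\phi^*(g_k).
\]
The preconditioned piece gives $-\frac1L\langle g_k,\nabla\phi^*(g_k)\rangle$. By the three-point identity~\eqref{eq:Bregmanidentity} with $\nabla\phi^*(0)=0$, this equals $-\frac1L\bigl(D_{\phi^*}(0,g_k)+D_{\phi^*}(g_k,0)\bigr)$. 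The leftover $D_f(x_{k+1},x_k)$ must be absorbed. Using (A1$^*$) in the form $D_f(x_{k+1},x_k)\le\frac1L$-type control via the dual divergence, together with $x_{k+1}-x_k\approx-\frac1L\nabla\phi^*(g_k)$ at leading order, it should be dominated by the $\frac1L D_{\phi^*}(g_k,0)$ term. This is exactly the anisotropic descent property of Appendix~\ref{sec:anisotropic} evaluated at $y^+=x_k-\frac1L\nabla\phi^*(g_k)$. What survives is $-\frac1L D_{\phi^*}(0,g_k)+\alpha\langle g_k,y_k-x_{k+1}\rangle$.

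\textbf{Step 2 ($D_\phi$-part).} Use the discrete analogue of the chain rule computation. By~\eqref{eq:Bregmanidentity},
\[
D_\phi(x_{k+1},y_{k+1})-D_\phi(x_k,y_k)=D_\phi(x_{k+1},x_k)\text{-type terms}+\langle\nabla\phi(x_k)-\nabla\phi(y_k),x_{k+1}-x_k\rangle-\langle\nabla\phi(y_{k+1})-\nabla\phi(y_k),x_{k+1}-y_{k+1}\rangle.
\]
Insert the $y$-update. The term $-\frac{\alpha}{\mu}g_{k+1}$ multiplied by $\mu$ yields $\alpha\langle g_{k+1},x_{k+1}-y_{k+1}\rangle$. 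This cancels the $\alpha\langle g_k,y_k-x_{k+1}\rangle$ from Step 1 up to differences $g_{k+1}-g_k$ and $y_{k+1}-y_k$, exactly mirroring the cancellation $-\langle\nabla f,x-y\rangle+\langle\nabla f,x-y\rangle$ in the continuous proof. The remaining terms are the discrete version of $-2\mu\langle\nabla\phi(x)-\nabla\phi(y),x-y\rangle\le0$, plus mixed terms all carrying a factor $\mu$ (either explicitly or through $\mu D_\phi$), or cross terms bounded by them via the implicit structure.

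\textbf{Step 3 (collect and conclude).} Group everything not already nonpositive into an $O(\mu)$ remainder. Here ``$O(\mu)$'' is understood with constants depending on $\alpha,L$ and bounds on $\phi,\nabla\phi$ over the current iterates, e.g.\ on a sublevel set of $\mathcal E$, which is invariant inductively. Then $\mathcal E_{k+1}-\mathcal E_k\le-\frac1L D_{\phi^*}(0,g_k)+C\mu$. This is nonpositive once $\mu\le D_{\phi^*}(0,g_k)/(CL)$, i.e.\ as long as we are away from the minimizer.

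\textbf{Main obstacle.} The main obstacle is the cross terms $\alpha\langle g_{k+1}-g_k,\cdot\rangle$, which do not carry a factor $\mu$. To handle them I would first try Young's inequality against $\frac1L D_{\phi^*}(g_{k+1},g_k)$, controlled by (A1$^*$), and against the discrete dissipation term. If that fails, I would require $\alpha$ small as in the appendix's claim, so that the cross terms can be charged to the stated $O(\mu)$ remainder and the claimed sign follows.
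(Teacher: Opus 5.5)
\paragraph{Verdict.} There is a genuine gap, and it is the step you flag as the main obstacle; your proposed fallback cannot close it.

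\paragraph{The leftover cross term is not $O(\mu)$.} After your Step~2 cancellation you still have a term $\alpha\langle\nabla f(x_{k+1}),y_k-y_{k+1}\rangle$ (with $g_k$ in place of $g_{k+1}$ in your version), and it carries no factor $\mu$. The $y$-update moves $\nabla\phi(y)$ by $\frac{\alpha}{\mu}\nabla f(x_{k+1})$, so in the Euclidean case
\[
\alpha\langle\nabla f(x_{k+1}),y_k-y_{k+1}\rangle=\frac{\alpha^2}{1+\alpha}\langle\nabla f(x_{k+1}),y_k-x_{k+1}\rangle+\frac{\alpha^2}{\mu(1+\alpha)}\|\nabla f(x_{k+1})\|^2 .
\]
The same hidden $1/\mu$ means your ``$\mu$-weighted'' terms, such as $\mu D_\phi(y_{k+1},y_k)$ or $\mu\langle\nabla\phi(y_k)-\nabla\phi(y_{k+1}),\cdot\rangle$, are of size $\alpha^2/\mu$ or $\alpha$, not $\mu$.

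Taking $\alpha$ small and charging these terms to the $O(\mu)$ remainder fails for fixed $\alpha$. Take $\phi=f=\frac12x^2$, $L=1$, $(x_k,y_k)=(0,1)$. Then $\nabla f(x_k)=0$ and $x_{k+1}=\frac{\alpha}{1+\alpha}$, so
\[
\mathcal{E}_{k+1}-\mathcal{E}_k\ge\frac{\alpha^2}{2(1+\alpha)^2}-\frac{\mu}{2}>0\quad\text{whenever }\mu<\frac{\alpha^2}{(1+\alpha)^2}.
\]

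If instead $\alpha$ is tied to $\mu$, say $\alpha=\sqrt{\mu/L}$, the cross term is of order $\frac1L\|\nabla f(x_{k+1})\|^2$. That is as large as the main term $\frac1L D_{\phi^*}(0,\nabla f(x_k))$, so it must be cancelled by an $O(1)$ negative term rather than absorbed into a remainder.

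\paragraph{What the paper adds.} The missing ingredient is a coupling of $\alpha$ to $\mu$. The paper's proof simply \emph{assumes} a compatibility inequality of the type in Assumption~\ref{ass:compatibility}:
\[
\langle\nabla f(x_{k+1}),y_k-y_{k+1}\rangle\le\frac1L D_{\phi^*}(\nabla f(x_{k+1}),0)+\mu D_\phi(y_{k+1},y_k).
\]
This is an extra hypothesis, not in the statement, and it effectively forces $\alpha\lesssim\sqrt{\mu/L}$.

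\paragraph{Your bookkeeping leaves nothing to pay for the cross term.} The paper expands $\mathcal{E}$ at $z_{k+1}$ and writes the scheme as an implicit Euler step plus corrections, as in Lemma~\ref{lem:descentidentity}. This buys three things:
\begin{enumerate}
\item The terms $\alpha\langle g_{k+1},x_{k+1}-y_{k+1}\rangle$ cancel exactly, so your $g_{k+1}-g_k$ mismatch terms never arise; they are an artifact of expanding at $z_k$.
\item The $f$-part yields $-D_f(x_k,x_{k+1})$ together with $-\frac1L\langle g_{k+1},\nabla\phi^*(g_k)\rangle$. The three-point identity and (A1$^*$), in the form $\frac1L D_{\phi^*}(g_{k+1},g_k)\le D_f(x_k,x_{k+1})$, then leave both $-\frac1L D_{\phi^*}(0,g_k)$ and a spare $-\frac1L D_{\phi^*}(g_{k+1},0)$.
\item That spare piece is exactly the budget the compatibility inequality consumes.
\end{enumerate}

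Expanding at $x_k$ instead produces $+D_f(x_{k+1},x_k)$. Your Step~1 spends $\frac1L D_{\phi^*}(g_k,0)$ on it, and this works only when $\alpha=0$. For $\alpha>0$, anisotropic descent at $y^+=x_k-\frac1L\nabla\phi^*(g_k)$ yields
\[
f(x_{k+1})-f(x_k)\le-\frac1L D_{\phi^*}(0,g_k)+\frac1L D_\phi\bigl(L\alpha(y_k-x_{k+1}),0\bigr).
\]
Here the linear term $\alpha\langle g_k,y_k-x_{k+1}\rangle$ that your Step~2 cancellation needs has disappeared. If you keep that linear term instead, the residual is bounded by $\frac1L D_\phi\bigl(L\alpha(y_k-x_{k+1}),0\bigr)-\alpha\langle g_k,y_k-x_{k+1}\rangle$, which is an $O(\alpha)$ quantity with no factor $\mu$.

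\paragraph{Step~3.} Your threshold $\mu\le D_{\phi^*}(0,g_k)/(CL)$ depends on $k$ and degenerates near the minimizer. Also, a sublevel set of $\mathcal{E}$ only gives $D_\phi(x,y)=O(1/\mu)$, so your constant $C$ itself depends on $\mu$.

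\paragraph{A caveat on the paper.} The paper's own leftover terms are not literally $O(\mu)$ either. For example, $\mu\langle\nabla\phi(y_k)-\nabla\phi(y_{k+1}),x_k-y_{k+1}\rangle$ contains $\alpha\langle\nabla f(x_{k+1}),x_k-y_{k+1}\rangle$. What this line of argument can actually establish is an inequality under the compatibility coupling, not the stated ``exact identity''.
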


\begin{proof}
Expand the Lyapunov difference at $z_{k+1}$:
\[
\begin{aligned}
\mathcal{E}_{k+1}-\mathcal{E}_k
={}&\alpha\langle \nabla \mathcal{E}(z_{k+1}),\,\mathcal{G}(z_{k+1})\rangle + \alpha\langle \nabla f(x_{k+1})+\mu(\nabla\phi(x_{k+1})-\nabla\phi(y_{k+1})),\,y_k-y_{k+1}\rangle \\&{}-\frac{1}{L}\langle \nabla f(x_{k+1})+\mu(\nabla \phi(x_{k+1})-\nabla\phi(y_{k+1})),\,\nabla\phi^*(\nabla f(x_{k}))\rangle \\
{}&-D_f(x_k,x_{k+1})-\mu(D_\phi(x_k,y_k)-D_\phi(x_{k},y_{k+1})+D_\phi(x_k,x_{k+1})\\&{}+\langle \nabla^2\phi(y_{k+1})(x_{k+1}-y_{k+1}),y_k-y_{k+1}\rangle).
\end{aligned}
\]
The last term is the Bregman divergence of $H(x,y):=D_\phi(x,y)$:
\[
\begin{aligned}
    D_H(z_k,z_{k+1})
    &=
   H(z_k)-H(z_{k+1})-\langle \nabla H(z_{k+1}),z_k-z_{k+1}\rangle \\
   &=
   D_\phi(x_k,y_k)-D_\phi(x_{k+1},y_{k+1})\\
    &\quad -\langle \nabla\phi(x_{k+1})-\nabla\phi(y_{k+1}),\,x_k-x_{k+1}\rangle
    -\langle \nabla^2\phi(y_{k+1})(x_{k+1}-y_{k+1}),y_k-y_{k+1}\rangle
\end{aligned}
\]
and 3-point identity for Bregman divergences gives
\[
\begin{aligned}
    \langle \nabla\phi(x_{k+1})-\nabla\phi(y_{k+1}),\,x_k-x_{k+1}\rangle
= D_\phi(x_k,\,y_{k+1}) - D_\phi(x_k,\,x_{k+1}) - D_\phi(x_{k+1},\,y_{k+1}).
\end{aligned}
\]

We bound each term in the above expression. First, we have
\[
\langle \nabla \mathcal{E}(z_{k+1}),\,\mathcal{G}(z_{k+1})\rangle
=-2\mu\langle \nabla\phi(x_{k+1})-\nabla\phi(y_{k+1}),\,x_{k+1}-y_{k+1}\rangle=2\mu D_\phi^{\text{sym}}(x_{k+1},y_{k+1}).
\]
Next, we assume the following inequality holds:
\[
\langle \nabla f(x_{k+1}),\,y_k-y_{k+1}\rangle
\le
\frac{1}{L} D_{\phi^*}(\nabla f(x_{k+1}),0)+\mu D_\phi(y_{k+1},y_k).
\]
Then, by the 3-point identity for Bregman divergences
\[
\begin{aligned}
&\langle \nabla f(x_{k+1}),\, \nabla\phi^*(\nabla f(x_{k}))
  \rangle \\
&\quad= D_{\phi^*}\!\left(0,\,\nabla f(x_{k})\right)
  + D_{\phi^*}\!\left(0,\,\nabla f(x_{k+1})\right)
  - D_{\phi^*}\!\left(\nabla f(x_{k+1}),\,\nabla f(x_{k})\right).
\end{aligned}
\]
By dual relative smoothness, we have
\[
D_{\phi^*}\!\left(\nabla f(x_{k+1}),\,\nabla f(x_{k})\right)
\le
L D_\phi(x_{k},x_{k+1}).
\]
Combining the above estimates, we have
\[
\begin{aligned}
\mathcal{E}_{k+1}-\mathcal{E}_k
&\le
-2\alpha\mu D_\phi^{\text{sym}}(x_{k+1},y_{k+1})\\
&+\mu\langle \nabla\phi(x_{k+1})-\nabla\phi(y_{k+1}),\,y_{k}-y_{k+1}-\frac{1}{L}\nabla \phi^*(\nabla f(x_{k}))\rangle\\
&+\mu(D_\phi(x_k,y_{k+1})-D_\phi(x_k,y_k)-D_\phi(x_k,x_{k+1})+D_\phi(y_{k+1},y_k))\\
&-\mu\langle \nabla^2\phi(y_{k+1})(x_{k+1}-y_{k+1}),y_k-y_{k+1}\rangle - \frac{1}{L}D_{\phi^*}(0,\nabla f(x_{k})).
\end{aligned}
\]
The second line can be simplified using the 3-point identity for Bregman divergences:
\[
\begin{aligned}
&D_\phi(x_k,y_{k+1})-D_\phi(x_k,y_k)-D_\phi(x_k,x_{k+1})+D_\phi(y_{k+1},y_k)\\
&{}= \langle \nabla\phi(y_k)-\nabla\phi(y_{k+1}),\, x_k - y_{k+1}\rangle 
- D_\phi(x_k, x_{k+1})
\end{aligned}
\]
Thus, we have
\[
\begin{aligned}
\mathcal{E}_{k+1}-\mathcal{E}_k
&\le
-2\alpha\mu D_\phi^{\text{sym}}(x_{k+1},y_{k+1})\\
&+\mu\langle \nabla\phi(x_{k+1})-\nabla\phi(y_{k+1})-\nabla^2\phi(y_{k+1})(x_{k+1}-y_{k+1}),\,y_{k}-y_{k+1}\rangle\\
&+\mu\langle \nabla\phi(y_k)-\nabla\phi(y_{k+1}),\, x_k - y_{k+1}\rangle 
- \mu D_\phi(x_k, x_{k+1})\\
& - \frac{1}{L}D_{\phi^*}(0,\nabla f(x_{k})) - \frac{\mu}{L}\langle \nabla\phi(x_{k+1})-\nabla\phi(y_{k+1}),\,\nabla \phi^*(\nabla f(x_{k}))\rangle.
\end{aligned}
\]
To approximate the above expression, we linearize differences of \(\nabla\phi\):
\[
\begin{aligned}
\nabla \phi(x_{k+1})-\nabla\phi(y_{k+1})=\nabla^2\phi(\xi_{k+1})(x_{k+1}-y_{k+1}),\\
\nabla\phi(y_k)-\nabla\phi(y_{k+1})=\nabla^2\phi(\zeta_{k+1})(y_k-y_{k+1}),
\end{aligned}
\]
and thus
\[
\begin{aligned}
&\langle \nabla\phi(x_{k+1})-\nabla\phi(y_{k+1})-\nabla^2\phi(y_{k+1})(x_{k+1}-y_{k+1}),\,y_{k}-y_{k+1}\rangle\\
={}&\langle (\nabla^2\phi(\xi_{k+1})-\nabla^2\phi(y_{k+1}))(x_{k+1}-y_{k+1}),\,y_k-y_{k+1}\rangle,\\
&\langle \nabla\phi(y_k)-\nabla\phi(y_{k+1}),\, x_k - y_{k+1}\rangle \\
={}&\langle y_k-y_{k+1},\, \nabla^2\phi(\zeta_{k+1})(x_k - y_{k+1})\rangle.
\end{aligned}
\]
Therefore, we have
\[
\begin{aligned}
\mathcal{E}_{k+1}-\mathcal{E}_k
&\le
-2\alpha\mu D_\phi^{\text{sym}}(x_{k+1},y_{k+1})\\
&+\mu\langle \nabla\phi(x_{k+1})-\nabla\phi(y_{k+1})-\nabla^2\phi(y_{k+1})(x_{k+1}-y_{k+1}),\,y_{k}-y_{k+1}\rangle\\
&+\mu\langle \nabla^2\phi(\zeta_{k+1})(y_k-y_{k+1}),\, x_k - y_{k+1}\rangle 
- \mu D_\phi(x_k, x_{k+1})\\
& - \frac{1}{L}D_{\phi^*}(0,\nabla f(x_{k})) - \frac{\mu}{L}\langle \nabla^2\phi(\xi_{k+1})(x_{k+1}-y_{k+1}),\,\nabla \phi^*(\nabla f(x_{k}))\rangle.
\end{aligned}
\]
\end{proof}

\section{Complete Proof of Lemma~\ref{lem:descentidentity}}
\begin{lemma}[Restatement of Lemma~\ref{lem:descentidentity}]
The iterates of~\eqref{eq:scheme} satisfy
\begin{equation}
\begin{split}
(1+\alpha)E(z_{k+1})-E(z_k)
={}&-D_f(x_k,x_{k+1})-\mu D_\phi(y_{k+1},y_k)\\
&+\alpha\dual{\nabla f(x_{k+1}),y_k-y_{k+1}}
-\alpha\beta\dual{\nabla f(x_{k+1}),\nabla\phi^*(\nabla f(x_k))}\\
&-\alpha D_{f-\mu\phi}(x^\star,x_{k+1})
-\alpha\mu D_\phi(y_{k+1},x_{k+1}).
\end{split}
\end{equation}
\end{lemma}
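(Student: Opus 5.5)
We prove the identity by computing $E(z_{k+1})-E(z_k)$ directly, expanding each Bregman term around the new iterate. We then substitute the scheme~\eqref{eq:scheme}, as in the outline given after Lemma~\ref{lem:descentidentity}.

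\emph{The $f$-part.} Write $g_{k+1}=\nabla f(x_{k+1})$. Since $\nabla f(x^\star)=0$, we have $D_f(x,x^\star)=f(x)-f(x^\star)$. Hence
$$D_f(x_{k+1},x^\star)-D_f(x_k,x^\star)=\dual{g_{k+1},x_{k+1}-x_k}-D_f(x_k,x_{k+1}).$$
From~\eqref{eq:scheme-x}, $x_{k+1}-x_k=\alpha(y_{k+1}-x_{k+1})+\alpha(y_k-y_{k+1})-\alpha\beta\nabla\phi^*(\nabla f(x_k))$. This produces three contributions:
\begin{itemize}
\item the implicit-Euler term $\alpha\dual{g_{k+1},y_{k+1}-x_{k+1}}$;
\item the correction $\alpha\dual{g_{k+1},y_k-y_{k+1}}$;
\item the explicit term $-\alpha\beta\dual{g_{k+1},\nabla\phi^*(\nabla f(x_k))}$.
\end{itemize}

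\emph{The $\phi$-part.} Apply the three-point identity~\eqref{eq:Bregmanidentity} with points $x^\star,y_{k+1},y_k$:
$$\mu D_\phi(x^\star,y_{k+1})-\mu D_\phi(x^\star,y_k)=\mu\dual{\nabla\phi(y_{k+1})-\nabla\phi(y_k),x^\star-y_{k+1}}\cdot(-1)-\mu D_\phi(y_{k+1},y_k).$$
Up to the sign convention, this is $\mu\dual{\nabla\phi(y_{k+1})-\nabla\phi(y_k),y_{k+1}-x^\star}-\mu D_\phi(y_{k+1},y_k)$. The cross term is exactly $\dual{\partial_\eta E(z_{k+1}),\eta_{k+1}-\eta_k}$, with $\partial_\eta E=\mu(y-x^\star)$ as in Lemma~\ref{lm:SL}. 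Substituting~\eqref{eq:scheme-y} gives
$$\alpha\mu\dual{\nabla\phi(x_{k+1})-\nabla\phi(y_{k+1}),y_{k+1}-x^\star}-\alpha\dual{g_{k+1},y_{k+1}-x^\star}.$$

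\emph{Assembling the implicit-Euler terms.} Collect these terms together with $\alpha\dual{g_{k+1},y_{k+1}-x_{k+1}}$. The $y_{k+1}$ pieces cancel, leaving $-\alpha\dual{g_{k+1},x_{k+1}-x^\star}+\alpha\mu\dual{\nabla\phi(x_{k+1})-\nabla\phi(y_{k+1}),y_{k+1}-x^\star}$. This is the computation of Lemma~\ref{lm:SL} evaluated at $z_{k+1}$ with $\beta=0$, which equals
$$-\alpha E(z_{k+1})-\alpha D_{f-\mu\phi}(x^\star,x_{k+1})-\alpha\mu D_\phi(y_{k+1},x_{k+1}).$$
To verify it, use the three-point identity once more:
$$\dual{\nabla\phi(x_{k+1})-\nabla\phi(y_{k+1}),y_{k+1}-x^\star}=D_\phi(x^\star,x_{k+1})-D_\phi(x^\star,y_{k+1})-D_\phi(y_{k+1},x_{k+1}).$$
Also use $-\dual{g_{k+1},x_{k+1}-x^\star}=-D_f(x_{k+1},x^\star)-D_f(x^\star,x_{k+1})$. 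The terms $-D_f(x^\star,x_{k+1})+\mu D_\phi(x^\star,x_{k+1})$ combine into $-D_{f-\mu\phi}(x^\star,x_{k+1})$. Moving $-\alpha E(z_{k+1})$ to the left side gives the factor $(1+\alpha)$.

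The only delicate step is sign and orientation bookkeeping in the two three-point identities; in particular, which argument of $D_\phi$ carries $y_{k+1}$ versus $y_k$. We expect this to be the main obstacle. We would check it against the Euclidean case $\phi=\frac12\|\cdot\|^2$, where all terms reduce to squared norms.
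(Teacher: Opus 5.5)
Your proof is correct and is essentially the paper's argument. Your $f$-part and your three-point $\phi$-part are exactly the $x$- and $\eta$-components, with $\eta=\nabla\phi(y)$, of the expansion $E(z_{k+1})-E(z_k)=\dual{\nabla E(z_{k+1}),z_{k+1}-z_k}-D_E(z_k,z_{k+1})$. You then split the scheme into an implicit Euler step plus a correction and evaluate the implicit part by the computation of Lemma~\ref{lm:SL} at $z_{k+1}$, as the paper does; your sign and orientation bookkeeping already comes out exactly to the stated identity, and the only cosmetic difference is that you keep the whole $\beta$-term as an explicit correction rather than splitting off $-\alpha\beta\dual{\nabla f(x_{k+1}),\nabla\phi^*(\nabla f(x_{k+1}))}$ into the flow term.
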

%
\begin{proof}
Expand the difference of Lyapunov functions at $z_{k+1}$:
$$
E(z_{k+1})-E(z_k) = \dual{\nabla E(z_{k+1}),z_k-z_{k+1}} -D_E(z_k,z_{k+1}).
$$
Since $D_E(z_k,z_{k+1})=D_f(x_k,x_{k+1})+\mu D_\phi(y_{k+1},y_k)$, we have the first line on the right-hand side of~\eqref{eq:descent}.
Let $G$ denote the vector field in~\eqref{eq:flow}. The scheme~\eqref{eq:scheme} can be written as a correction of the implicit Euler step:
$$z_{k+1}-z_k=\alpha G(z_{k+1})+\binom{\alpha(y_k-y_{k+1})-\alpha\beta\bigl(\nabla\phi^*(\nabla f(x_k))-\nabla\phi^*(\nabla f(x_{k+1}))\bigr)}{0}.$$
By Lemma \ref{lm:SL}, we have
$$
\begin{aligned}
\dual{\nabla E(z_{k+1}), \alpha G(z_{k+1})} ={}& \alpha E'(z_{k+1})\\={}&-\alpha E(z_{k+1}) - \alpha\beta\dual{\nabla f(x_{k+1}),\nabla\phi^*(\nabla f(x_{k+1}))}\\
&{} - \alpha D_{f-\mu\phi}(x^\star,x_{k+1}) - \alpha\mu D_\phi(y_{k+1},x_{k+1}).
\end{aligned}
$$
Finally, since $$\nabla E(z_{k+1})=\binom{\nabla f(x_{k+1})}{\mu\nabla\phi(y_{k+1})},$$ the remaining terms in the second and third lines in \eqref{eq:descent} are recovered by its inner product with the discrepancy terms. 
%
%
\end{proof}

\end{document}